\theoremstyle{definition}
\newtheorem{definition}[subsubsection]{Definition}
\newtheorem{example}[subsubsection]{Example}
\newtheorem{notation}[subsubsection]{Notation}
\newtheorem{remark}[subsubsection]{Remark}
\theoremstyle{plain}
\newtheorem{theorem}[subsubsection]{Theorem}
\newtheorem{proposition}[subsubsection]{Proposition}
\newtheorem{lemma}[subsubsection]{Lemma}
\newtheorem{corollary}[subsubsection]{Corollary}
\def\K{{\mathbb K}}
\def\ov{\overline}
\def\ot{\otimes}
\begin{document}

\author[Dalia Artenstein, Ana Gonz\'alez, M. Ronco]{Dalia Artenstein, Ana Gonz\'alez, Mar\'\i a Ronco}
\address{DA: IMERL, Universidad de la Rep\'ublica\\ Facultad de Ingeniería, Av. Julio Herrera y Reissig  565\\ Montevideo, Uruguay}
\email{darten@fing.edu.uy}
\address{AG: IMERL, Universidad de la Rep\'ublica\\ Facultad de Ingeniería, Av. Julio Herrera y Reissig  565\\ Montevideo, Uruguay}
\email{anagon@fing.edu.uy}
\address{MOR: IMAFI, Universidad de Talca\\ Campus Norte, Avda. Lircay s/n\\ Talca, Chile}
\email{mronco@utalca.cl}

\title{Primitive elements in infinitesimal bialgebras}

\subjclass[2010]{ Primary 17A60, Secondary 17A50} \keywords{Magmatic algebras, bialgebras, Tamari order, binary rooted trees}


\maketitle
\tableofcontents
	
\today

\begin{abstract}
For any set $S$, the free magmatic algebra spanned by ${\mbox{card}(S)}$ binary products is the vector space spanned by the set of all planar rooted binary trees with the internal nodes colored by the elements of $S$, graded by the number of leaves of a tree. We show that it has a unique structure of coassociative coalgebra such that the coproduct 
satisfies the unital infinitesimal condition with each magmatic product, and prove an analog of Aguiar-Sottile\rq s formula in this context, describing the coproduct in terms of the Moebius basis for the Tamari order. The last result allows us to compute the subspace of primitive elements of any unital infinitesimal $S$-magmatic bialgebra. As an example, we construct a set of generators of the dual of Pilaud and Pons bialgebra of integer relations and compute an explicit basis of its subspace of primitive elements.
\end{abstract}
\section{Introduction}
\medskip

In \cite{JoRo}, S. A. Joni and G.-C. Rota introduced the definition of infinitesimal bialgebra (see also \cite{Agu}). An infinitesimal bialgebra is a vector space $A$ equipped with an associative product $\cdot$ and a coassociative coproduct $\Delta$ satisfying the following relation
\begin{equation}  \Delta (x\cdot y) = \sum x_{(1)}\otimes (x_{(2)}\cdot y) + \sum (x\cdot y_{(1)})\otimes y_{(2)},\end{equation}
which means that $\Delta$ is a coderivation for the product $\cdot$.

This notion was modified in \cite{LoRo2}, by  J.-L. Loday and the third author, to study certain coassociative coalgebras equipped with a unital associative product. A unital infinitesimal bialgebra is a coassociative counital coalgebra $(H, \Delta)$ equipped with an associative product $\cdot$ which satisfies the equation
\begin{equation}  \Delta (x\cdot y) = \sum x_{(1)}\otimes (x_{(2)}\cdot y) + \sum (x\cdot y_{(1)})\otimes y_{(2)} - x\otimes y,\end{equation}
for any elements $x$ and $y$ in $H$, where $\Delta (z) = \sum z_{(1)}\otimes z_{(2)}$ for any $z\in H$.

The product of an infinitesimal bialgebra has no unit, but the extra term of equation $(2)$, allows to work with units. 
 \medskip

The goal of the present work is to study the structure of conilpotent coalgebras equipped with many magmatic products, where a magmatic  product (following Bourbaki\rq s definition) is just a binary operation. For any set $S$, an $S$ magmatic bialgebra is coassociative coalgebra $(H, \Delta)$ equipped with a family of magmatic products $\{*_s\}_{s\in S}$ satisfying the unital infinitesimal relation with $\Delta$.

In  \cite{LoRo2}, the authors showed that any conilpotent unital infinitesimal bialgebra $H$ is isomorphic to the cotensor coalgebra over the subspace ${\mbox{Prim}(H)}$ of its primitive elements. This result was extended in \cite{HoLoRo} to any conilpotent infinitesimal magmatic bialgebra, that is a counital coalgebra equipped with a binary product (not necessarily associative) which satisfies relation $(2)$. We want to compute its subspace of primitive elements.
\medskip

For any positive integer $n$,  let ${\mathbf {PBT}}_n^{S}$ be the set of planar binary rooted trees with $n$ leaves and its $n-1$ internal nodes colored by the elements of $S$. The free $S$-magmatic algebra generated by one element is the graded vector space $\K\bigl[{\mathbf {PBT}}^{S}\bigr]:=\bigoplus_{n\geq 1} \K[{\mathbf {PBT}}_n^{S}]$, equipped with the products $\veebar_s$ for $s\in S$, where $t\veebar_s w$ is the tree obtained from $t$ and $s$ by joining their roots to a new root colored with $s$. When $\vert S\vert =1$, we simply denote $\K\bigl[{\mathbf {PBT}}\bigr]$  and $\veebar $, instead of $\K\bigl[{\mathbf {PBT}}^{S}\bigr]$ and $\veebar_s$.

For $S\neq \emptyset$, there exists a unique coassociative coproduct $\overset{\circledast}{\Delta }$, such that $\bigl(\K\bigl[{\mathbf {PBT}}^{S}\bigr], \cdot_s, \overset{\circledast}{\Delta }\bigr)$ is a magmatic infinitesimal bialgebra. A standard argument shows that this bialgebra structure exists on any free $S$-magmatic algebra.
\medskip

As the free $S$-magmatic algebra over one element acts on any $S$-magmatic algebra $A$, any element  of degree $n$ in $\K\bigl[{\mathbf {PBT}}^{S}\bigr]$ defines a $n$-ary product on $A$. Moreover, if $H$ is a unital $S$-magmatic bialgebra and an element $x$ is primitive in $(\K\bigl[{\mathbf {PBT}}^{S}\bigr], \overset{\circledast}{\Delta })$, then the subspace ${\mbox{Prim}(H)}$ of primitive elements of $H$ is close under the  action of $x$. So, we compute the primitive elements of the coalgebra $\K\bigl[{\mathbf {PBT}}^{S}\bigr]$.
\medskip

In \cite{AgSo}, M. Aguiar and F. Sottile used the Tamari order to describe the coproduct of the graded Hopf algebra of planar binary rooted trees $\K[Y_{\infty}]$, introduced in \cite{LoRo1}, in terms of the elements $M_t$ of the Moebius basis. They proved that
\begin{equation} \Delta (M_t) =\sum_{t_1/t_2 =t} M_{t_1}\otimes M_{t_2},\end{equation}
where $\Delta$ is the coproduct of $\K[Y_{\infty}]$ and $t_1/t_2$ is the tree obtained by joining the root of $t_1$ to the leftmost leaf of $t_2$.

In particular their result proved that the set $\{M_t\}$, where $t =\vert \veebar t\rq $, is a basis of the subspace of primitive elements of the Hopf algebra $\K[Y_{\infty}]$. However, the vector space $\K\bigl[{\mathbf {PBT}}\bigr]$ is the suspension  of the underlying graded vector space of $\K[Y_{\infty}]$, and consequently the coproduct $\Delta$ differs from the coproduct $\overset{\circledast}{\Delta }$ we deal with. 

We reformulate Aguiar and Sottile\rq s formula by introducing a graded associative product $\rightthreetimes$ on $\K\bigl[{\mathbf {PBT}}\bigr]$, and prove a similar result in our framework: the collection of elements $\{M_{t}\}$, with ${t}$ a $\rightthreetimes$-irreducible tree, is a basis of the subspace ${\mbox{Prim}(\K\bigl[{\mathbf {PBT}}\bigr]})$. These results generalize easily to colored trees.

As a byproduct, we give a recursive formula for the elements of the Moebius basis $\{M_t\}_{t\in {\mathbf{PBT}}}$ in terms of the products $\veebar$, $\rightthreetimes$ and the graftings of trees. Even if the Moebius function of the Tamari order was yet computed in different ways (see for instance \cite{BlSa}), we did not find in previous publications the recursive description obtained in the present work. 
\medskip

Finally, we get that if $H$ is a unital infinitesimal $S$-magmatic bialgebra generated by a set $X$ of primitive elements, then $H$ is isomorphic as a coalgebra to $T^c({\mbox{Prim}(H)})$, where ${\mbox{Prim}(H)}$ is generated by $X$ under the action of the products $M_{\underline t}$, where ${\underline t}$ is a colored tree satisfying certain conditions. 
\bigskip

Our motivation and main example to study these type of algebras comes from the Hopf algebras of integer relations introduced in \cite{PiPo} and \cite{PiPo1}.
In \cite{ChPiPo}, G. Chatel, V. Pilaud and V. Pons defined a weak order on the set ${\mathcal R}_n$, of all the reflexive relations on the set $\{1,\dots ,n\}$, which extends the weak Bruhat order of the set $\Sigma_n$ of permutations.

Afterwards, V. Pilaud and V. Pons defined a Hopf algebra structure on the graded vector space $\K[{\mathcal R}]:= \bigoplus_{n\geq 0}\K[{\mathcal R}_n]$, spanned by the set all reflexive integer relations, we denote it by ${\mathbb H}_{{\mathcal R}_{PP}}$.  The authors proved that the product of ${\mathbb H}_{{\mathcal R}_{PP}}$ is determined by intervals of the partial order introduced in \cite{ChPiPo}. Studying sub-Hopf algebras and quotients of ${\mathbb H}_{{\mathcal R}_{PP}}$, they obtained some well-known combinatorial Hopf algebras, as the Malvenuto-Reutenauer algebra (see\cite{MaRe}), the Hopf algebra of planar binary trees (see \cite{LoRo1}) and the Hopf algebra of ordered partitions (see \cite{Cha}), as well as new Hopf algebras as the algebra of special partially ordered sets, the algebra of weak order intervals and the algebra of Tamari order intervals. All these Hopf algebras are conilpotent.

In fact, the dual of the Hopf algebra ${\mathbb H}_{{\mathcal R}_{PP}}^*$ is easy to describe: the coproduct is given by the restrictions $\{1,\dots ,n\}\longrightarrow \{1,\dots ,i\}\times \{i+1,\dots ,n\}$, for $0\leq i\leq n$, and the associative product is obtained by shuffling the unital infinitesimal associative product $\uparrow$, which associates to any pair of integer relations $P\in {\mathcal R}_n$ and $Q\in {\mathcal R}_m$ the minimal reflexive relation defined on the disjoint union $P\sqcup Q$ satisfying that $(P\uparrow Q )\vert_{\{1,\dots ,n\}} =P$, $(P\uparrow Q )\vert_{\{n+1,\dots ,n+m\}} =Q$ and $(i,j)\in P\uparrow Q$, for any $1\leq i\leq n$ and $n+1\leq j\leq n+m$. 
\medskip

We prove that the vector space $\K[{\mathcal R}]$ is generated by the unique element $\#$ of ${\mathcal R}_1$ under the action of an infinite family of binary products 
$*_{\alpha}$, where $\alpha$ is a an element of the set of maps $\bigcup_{r\geq 1}\{\sqcup , \uparrow , \downarrow , \updownarrow\}^{\{1,\dots ,r\}}$. The products $*_{\alpha}$ are not associative, but they verify the unital infinitesimal relation with the coproduct of ${\mathbb H}_{{\mathcal R}_{PP}}^*$ . Therefore, the coalgebra ${\mathbb H}_{{\mathcal R}_{PP}}^*$  is isomorphic to the cotensor algebra over the vector space spanned by the element $\#$ under the action of the operations $M_{t}(\alpha_1,\dots ,\alpha_n)$, for any $\rightthreetimes$-irreducible planar binary rooted tree $t\in {\mathbf {PBT}}_{n+1}$. 
\medskip

As the disjoint union $\sqcup$ of reflexive integer relations defines an associative product on the vector space $\K[{\mathcal R}]$, it is easy to prove that $\K[{\mathcal R}]$ is isomorphic to the tensor vector space $T(\K[{\mbox{$\sqcup$-Irr}}])$, where $\K[{\mbox{$\sqcup$-Irr}}]$ denotes the vector space spanned by the set of all reflexive integer relations which are not the disjoint union of two other relations of smaller size. Therefore, we get that as vector spaces $\K[{\mbox{$\sqcup$-Irr}}]$ and the subspace ${\mbox{Prim}({\mathbb H}_{{\mathcal R}_{PP}}^*)}$ of primitive elements of the coalgebra $\K[{\mathcal R}]^*$ are isomorphic. We construct an explicit map which associates to any $\sqcup$-irreducible integer relation a primitive element of $\K[{\mathcal R}]^*$, in such a way that the images of all the elements of ${\mbox{$\sqcup$-Irr}}$ give a basis of ${\mbox{Prim}(\K[{\mathcal R}]^*)}$.\bigskip

The manuscript is organized as follows:

In section 2, we reformulate  Aguiar and Sottile\rq s formula  in our framework and prove it. In order to do so, we study the relationship between the Tamari order and the operations $\veebar$ and $\rightthreetimes$, which allows us to give a recursive formula for the Moebius function of the Tamari partial order.
\medskip

In section 3 we recall the basic definitions of infinitesimal magmatic bialgebras and the main results of \cite{HoLoRo}. 
We describe the  $S$-magmatic bialgebra structure of $\K\bigl[{\mathbf {PBT}}^{S}\bigr]$. 

Finally, we look at an element $(t,(s_{1},\cdots, s_{n-1})) \in \mathbf {PBT}_{n}^S$ as an $n$-ary operator on any unital $S$-magmatic infinitesimal bialgebra $H$.  When $H$ is generated, as an $S$-magmatic algebra, by a set $X$ of primitive elements, the subspace of primitive elements ${\mbox{Prim}(H)}$ is generated by the action of the operators $\{M_t(s_1,\dots, s_{n-1})\mid t\in {\mbox{$\rightthreetimes$-Irr}_n}\ {\rm and}\ s_1,\dots, s_{n-1}\in S\}$ and some binary products $M(s)$ on $X$. The recursive description of the Moebius elements $M_t$ obtained in section 2 shows that certain operators $M_t(s_1,\dots, s_{n-1})$ are the compostion of other ones, we exhibit a smaller set of generators of ${\mbox{Prim}(H)}$.
\medskip

In section 4 we consider the  dual Hopf algebra of integer relations $\K[{\mathcal R}]^{*}$ defined by V. Pilaud and V. Pons, and describe a collection of binary magmatic products on the coalgebra  which generates $\K[{\mathcal R}]$ from the unique integer relation of size $1$. Using the previous results, we construct an explicit map which associates to any $\sqcup$-irreducible integer relation a primitive element of $\K[{\mathcal R}]^*$, in such a way that the images of all the elements of ${\mbox{$\sqcup$-Irr}}$ give a basis of ${\mbox{Prim}(\K[{\mathcal R}]^*)}$.

\medskip

{\bf Basic Notation}  We denote by $\K$ a base field and, for any set $X$, the vector field  spanned by $X$ is denoted $\K[X]$.

Given a $\K$-vector space, we denote $T(V) =\bigoplus _{n\geq o}V^{\otimes n}$ the free associative algebra over $V$, with the concatenation coproduct
\begin{equation*} (v_1\otimes \dots \otimes v_n)\smile (w_1\otimes \dots \otimes w_m) := v_1\otimes \dots \otimes v_n\otimes w_1\otimes \dots \otimes w_m.\end{equation*} 
We denote by $T^c(V)$ its dual coalgebra structure. That is, the underlying vector space of $T^c(V)$ is $T(V)$, but considered as the free conilpotent coalgebra over $V$, with the deconcatenation coproduct.

For $n\geq 1$, we denote by $[n]$ the set $\{1,\dots ,n\}$ of the first $n$ natural numbers and by $\Sigma_n$ the set of permutations of $n$ elements. For any vector space $V$, the natural (left) action of $\Sigma _n$ on $V^{\otimes n}$ is given by
\begin{equation*} \sigma (v_1\otimes \dots \otimes v_n) := v_{\sigma^{-1}(1)}\otimes \dots \otimes v_{\sigma^{-1}(n)}.\end{equation*}

A permutation $\sigma \in \Sigma_{n+m}$ satisfying that $\sigma(1) <\dots  <\sigma(n)$ and $\sigma(n+1)<\dots <\sigma(n+m)$ is called an {\it $(n,m)$-shuffle}. We denote the set of all $(n,m)$-shuffles by ${\mbox{sh}(n,m)}$.
\medskip

Given a partially ordered set $(P, \leq )$ and two elements $p, q \in P$, we denote $p\lessdot q$ when $q$ covers $p$, that is $p < q$ and there does not exist an element $z\in P$ such that $p < z < q$.

\section{Aguiar-Sottile formula} 

In the present section we recall basic notions about coalgebras, planar binary rooted trees and the Tamari order. We use them to introduce the coalgebra structure on the vector space $\K\bigl[{\mathbf {PBT}}^{S}\bigr]$, for any set $S$, and prove M. Aguiar and F. Sottile\rq s formula in this context. We give an easy recursive formula to compute the Moebius basis $\{M_t\}_{t\in {\mathbf{PBT}}}.$ 
\medskip

We begin by recalling some basic definitions of conilpotent coalgebras. 

\subsection{Unital augmented coalgebras} Let us recall the basic definitions of unital augmented coalgebra and primitive elements.

\begin{definition} \label{def:coalg} An augmented unital coalgebra over $\K$ is a vector space $C$, equipped with a linear map  $\Delta:C\longrightarrow C\otimes C$ and linear maps $\epsilon: C\longrightarrow \K$ and $\iota: \K\longrightarrow C$ satisfying that \begin{enumerate}
\item $\Delta $ is coassociative, that is $(\Delta\otimes {\mbox{id}_{C}})\circ \Delta = ({\mbox{id}_{C}}\otimes \Delta)\circ \Delta$,
\item $I_1\circ (\epsilon\otimes {\mbox{id}_{C}})\circ \Delta = {\mbox{id}_{C}} =I_2\circ ({\mbox{id}_{C}}\otimes \epsilon)\circ \Delta = {\mbox{id}_{C}}$, where $I_1$ (respectively $I_2$) is the natural isomorphism $I_1: \K\otimes C\longrightarrow C$ (respectively, $I_2: C\otimes \K\longrightarrow C$),
\item $\Delta (\iota(1_{\K}))= \iota(1_{\K})\otimes \iota(1_{\K})$ and $\epsilon\circ \iota = {\mbox{id}_{\K}}$\end{enumerate}
where ${\mbox{id}_{C}}$ is the identity morphism of $C$ (respectively  ${\mbox{id}_{\K}}$ is the identity of $\K$).\end{definition}

It is clear that, as a vector space,  $C = \K\oplus {\mbox{Ker}(\epsilon)}$. 	
\medskip

\begin{notation}\label{not:gral} Let $C$ be an augmented unital coalgebra. We denote by ${\ov C}$ the kernel of the aumentation map $\epsilon$. 
	
The  reduced coproduct ${\Delta}^{red} :{\ov C}\longrightarrow {\ov C}\otimes {\ov C}$ is the morphism given by:
\begin{equation*} {\Delta}^{red} := \Delta -  (\epsilon\otimes {\mbox{id}_{C}}\ +\ {\mbox{id}_{C}}\otimes \epsilon)\circ \Delta.\end{equation*}
It is immediate to see that if $\Delta$ is coassociative, then ${ \Delta}^{red}$ is coassociative, too.
	
We use Sweddler's notation for the reduced coproduct, that is ${\Delta}^{red}(x)=\sum x_{(1)}\otimes x_{(2)}$, for any $x\in {\ov C}$.
	
In general, for $n\geq 0$, we denote by ${ {\Delta}}^n:{\ov C}\longrightarrow {\ov C}^{\ot (n+1)}$ the linear map defined recursively by
\begin{enumerate}
\item ${{\Delta}}^0 :={\mbox{id}_{\ov C}}$,
\item for $n\geq 1$, ${{\Delta}}^{n+1} := \bigl({\mbox{id}_{\ov C^{\ot n}}}\ot { {\Delta}^{red}}\bigr)\ot { {\Delta}}^{n}$.
\end{enumerate}
\end{notation}
\medskip

\begin{definition} \label{def:primitive} 
Let $C$ be an augmented unital coalgebra. An element $x\in{\ov C}$ is  primitive if ${{\Delta}^{red}}(x) = 0$.  The subspace of primitive elements of $C$ is denoted ${\mbox{Prim}(C)}$. 

For $n\geq 1$, the subspace ${\mathcal F}_n(C)$ is given by
\begin{enumerate}
\item ${\mathcal F}_1(C) := {\mbox{Prim}(C)}$,
\item for $n > 1$, 
\begin{equation*} {\mathcal F}_n(C) := \{ x\in {\ov C}\mid {\Delta}^{red}(x)\in {\mathcal F}_{n-1}(C)\otimes {\mathcal F}_{n-1}(C)\}.
\end{equation*}
\end{enumerate}

We say that $C$ is conilpotent  when ${\ov C} = {\displaystyle \bigcup_{n\geq 1}{\mathcal F}_n(C)}$. 

If $C$ is conilpotent, then for any element $x\in {\ov C}$ there exists a positive integer $n_0$ such that ${{\Delta}}^n(x)=0$ for $n\geq n_0$. 

\end{definition}

\begin{definition}\label{def:unitinfbialg} A unital infinitesimal bialgebra is an augmented unital coalgebra $(C, \Delta, \epsilon, \iota)$ equipped with an associative product $\cdot$ which satisfies the following condition
\begin{equation*} \Delta^{red}(x\cdot y) = \sum x_{(1)}\otimes (x_{(2)}\cdot y) + \sum (x\cdot y_{(1)})\ot y_{(2)} + x\ot y,
\end{equation*}
for $x,y\in {\ov C}$.
\end{definition}
\bigskip

\subsection{Planar rooted trees}

\begin{definition} \label{trees} A {\it planar rooted binary tree} is a planar oriented connected graph with a maximal vertex, satisfying that any internal vertex has two incoming edges and one outgoing one. The maximal vertex of a planar binary rooted tree is called the {\it root}, while the minimal external vertices are called the {\it leaves} of the tree.\end{definition}

\begin{notation} \label{notn:coloredtrees} For $n\geq 1$, denote by ${\mbox{\bf PBT}_n}$ the set of all planar rooted binary trees with $n$ leaves (and $n-1$ internal vertices). The graded set ${\mbox{\bf PBT}}:= \bigcup _{n\geq 1} {\mbox{\bf PBT}_n}$ is the set of all planar rooted binary trees, graded by the number of leaves of a tree. We denote by $\vert t\vert$ the degree of a planar binary rooted tree $t$.
	
We denote by ${\mathbb K} [{\mbox{\bf PBT}}]$ the ${\mathbb K}$-vector space spanned by the graded set ${\mbox{\bf PBT}}$.\end{notation} 

\begin{example}\label{ex:trees} We have that $\mathbf{PBT_1}:=\bigl\{ \vert\bigr\}$, 

\begin{center}
\includegraphics[width=0.6\textwidth]{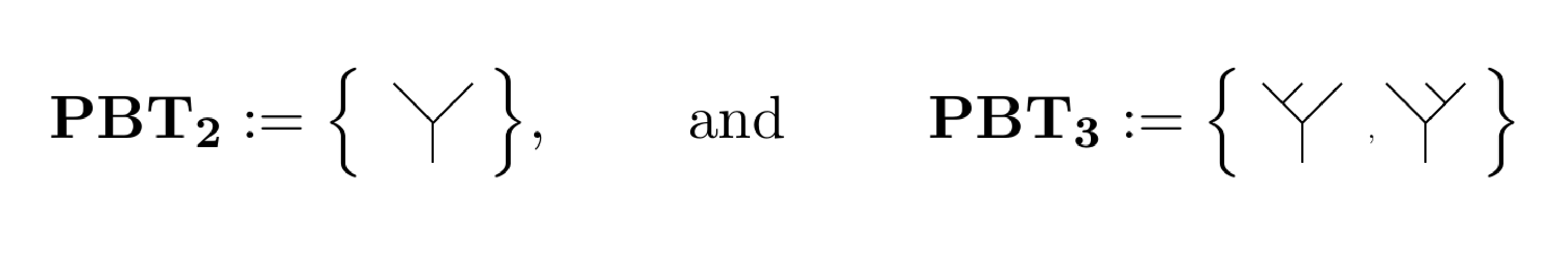}
\end{center}
\end{example}

Given a planar rooted tree $t$ with $n$ leaves, we assume that its leaves are numbered from $1$ to $n$, from left to right.
	
\begin{definition}\label{def:wedge} For any pair $t$ and $w$ of planar binary rooted trees,  the planar binary rooted tree $t\veebar w$ is obtained joining the roots of $t$ and $w$ to a new root, where $t$ is on the left side and $w$ on the right one.

Given a tree $t\in {\mbox{\bf PBT}_n}$,  a tree $w\in {\mbox{\bf PBT}_r}$ and and integer $1\leq j\leq n$, the {\it grafting} of $w$ at the $j^{th}$ leaf of $t$ is the tree $t\circ_jw\in 
{\mbox{\bf PBT}_{n+r-1}}$ obtained by joining the root of $w$ and the $j^{th}$ leaf of $t$. 

More in general, for any collection of trees $(w_1,\dots ,w_n)$, with $w_j\in {\mbox{\bf PBT}_{r_j}}$ for $1\leq j\leq n$, the {\it grafting} of $(w_1,\dots ,w_n)$ on $t$ is the tree 
\begin{equation*}t{\underline{\circ}}(w_1,\dots, w_n) := (((t\circ _n w_n)\circ_{n-1}w_{n-1})\dots )\circ_1w_1.\end{equation*}
\end{definition}

\begin{remark}\label{rem:propsofprods} The binary products $\veebar$ and $\circ_j$, as well as the $n$-ary operations ${\underline{\circ}}$ are extended by linearity to the vector space ${\mathbb K} [{\mbox{\bf PBT}}]$. The operation $\veebar$ defines a graded non-associative product on the vector space ${\mbox{\bf PBT}}$. Any tree $t\in {\mbox{\bf PBT}_n}$, with $n\geq 2$, may be written in a unique way as $t = t^l\veebar t^r$, with $\vert t^l\vert + \vert t^r\vert = \vert t\vert $.
\end{remark} 
\bigskip

For $n\geq 1$, D. Tamari defined in \cite{Tam} a partial order on the set  ${\mbox{\bf PBT}_n}$. This partial order is closely related to the weak Bruhat order defined on the set $\Sigma_n$ of permutations of $n$ elements, via the surjective map $\Gamma_n: \Sigma_n\longrightarrow {\mbox{\bf PBT}_n}$ defined by A. Tonks in \cite{Ton}. Let us recall the definition of both orders, as well as the construction of Tonk\rq s map.

\begin{definition} \label{def:Tamari} For $n\geq 1$, \begin{enumerate}
\item The weak Bruhat order on $\Sigma_n$ is the partial order transitively generated by the following covering relation:
Given permutations $\sigma$ and $\omega$ is $\Sigma_n$, we say that $\omega $ covers $\sigma$ if there exists $1\leq i <n$ such that $\sigma^{-1}(i) <\sigma^{-1}(i+1)$ and $\omega = \tau_i\cdot \sigma$, where $\tau_i$ is the transposition of $i$ and $i+1$. We denote it by $\leq_{wB}$.
\item the {\it Tamari order} on the set  ${\mbox{\bf PBT}_n}$ is the transitive relation spanned by the following conditions:\begin{enumerate}[(a)]
\item for any planar rooted trees $t_1, t_2$ and $t_3$ in ${\mbox{\bf PBT}_n}$, we have that $(t_1\veebar t_2)\veebar t_3 \leq_T t_1\veebar(t_2\veebar t_3)$,
\item if $t_1\leq_T w_1$ in ${\mbox{\bf PBT}_n}$ and $t_2\leq w_2$ in ${\mbox{\bf PBT}_m}$, then $t_1\veebar t_2\leq_T t_2\veebar w_2$ in ${\mbox{\bf PBT}_{n+m}}$.
\end{enumerate}
\end{enumerate}	
\end{definition}

It is well-known (and easy to check) that the partially ordered sets $(\Sigma_n , \leq_{wB})$ and $({\mbox{\bf PBT}_{n+1}}, \leq_T)$ are lattices.
\medskip

The description of Tonk\rq s map requires the notion of standardization.

\begin{definition} \label{def:stand} Given an injective map $\alpha:\{p+1, \dots, p+q\}\longrightarrow {\mathbb N}$, for positive integers $p,q$, the {\it standardization} of $\alpha$ is the unique permutation ${\mbox{std}(\alpha)}$ in $\Sigma _q$ satisfying that ${\mbox{std}(\alpha)}(i) < {\mbox{std}(\alpha)}(j)$ if, and only if, $\alpha (p+i) < \alpha(p+j)$.

For any permutation $\sigma\in \Sigma _n$ and any subset $S= \{i_1<\dots <i_r\}\subseteq \{1,\dots ,n\}$, the restricted permutation $\sigma^S\in \Sigma_r$ is the standardization of the map ${\tilde{\sigma}}$ given by ${\tilde{\sigma}}(j) := \sigma(i_j)$, for $1\leq j\leq r$.\end{definition}

For instance, if the image of $\alpha: \{ 3, 4, 5, 6, 7, 8\} \longrightarrow {\mathbb N}$ is the sequence $\alpha = (5, 3, 10, 7, 4, 12)$, the image of the permutation ${\mbox{std}(\alpha)}\in \Sigma_6$ is $(3, 1, 5, 4, 2, 6)$.
\medskip

\begin{definition} \label{def:Tonksmap} The surjective map $\Gamma_n:\Sigma_n\longrightarrow {\mbox{\bf PBT}_{n+1}}$ is recursively defined as follows\begin{enumerate}
\item $\Gamma_1((1)) :=$ $\scalebox{0.05}{\includegraphics[width=0.6\textwidth]{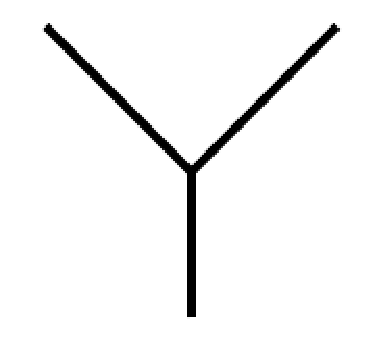}}$
\item For $n>1$, let $\sigma \in \Sigma_n$ be a permutation such that $\sigma^{-1}(n) = k$. Denote by $\sigma_1$ the restriction of $\sigma $ to the set $\{1,\dots ,k-1\}$ and by $\sigma_2$ its restriction to the set $\{k+1,\dots ,n\}$. The image of $\sigma$ under $\Gamma_n$ is the tree
\begin{equation*} \Gamma_n(\sigma ) =\begin{cases} \Gamma_{k_n-1}({\mbox{std}(\sigma_1)})\ \veebar\ \Gamma_{n-k_n}({\mbox{std}(\sigma_2)}),&{\rm for}\ k\notin\{1,n\},\\
\vert\veebar \Gamma_{n-1}({\mbox{std}(\sigma_2)}),&{\rm for}\ k= 1,\\
\sigma_1\veebar \vert,&{\rm for}\ k= n.\end{cases}
\end{equation*}
\end{enumerate}\end{definition}
\medskip

As shown in \cite{LoRo2} and \cite{AgSo}, for any $t\in {\mbox{\bf PBT}_{n+1}}$ the inverse image $\Gamma_n^{-1}(t)$ is an interval for the weak Bruhat order. Moreover, given two planar binary rooted trees $t$ and $w$ in ${\mbox{\bf PBT}_{n+1}}$, we have that $t <_Tw$ if for any permutation $\sigma \in \Sigma_n$ such that $\Gamma_n(\sigma ) = t$ there exist at least one permutation $\omega\in \Sigma_n$ such that $\Gamma_n(\omega ) = w$ and $\sigma < \omega$ for the weak Bruhat order of permutations. 

\begin{remark} \label{rem:proprestic} Given a pair of permutations $\sigma\leq_{wB}\tau\in \Sigma_n$ and a subset $S$ of $\{1,\dots, n\}$, we have that $\sigma^S\leq_{wB}\tau^S$\end{remark}

\begin{lemma} \label{lem:graftfortrees} Let $t$ and $w$ be a pair of trees in ${\mbox{\bf PBT}_n}$ such that $t = t_0{\underline{\circ}} (t_1,\dots ,t_r)$ and $w = w_0{\underline{\circ}} (w_1,\dots, w_r)$ for trees $t_0, w_0 \in {\mbox{\bf PBT}_r}$ and  $t_i, w_i\in {\mbox{\bf PBT}_{n_i}}$, for $1\leq i\leq r$ and $\sum_{i=1}^r n_i = n$.
 \begin{enumerate} 
\item if $t \leq_T w$, then $t_0\leq _Tw_0$ and $t_i\leq_T w_i$ for any $1\leq i\leq r$. 
\item any element $u$ satisfying that $t\leq_T u\leq_T w$ is of the form $u = u_0{\underline{\circ}} (u_1,\dots , u_r)$ with $u_0\in {\mbox{\bf PBT}_r}$ and $u_i\in {\mbox{\bf PBT}_{n_i}}$, for $1\leq i\leq r$.
\end{enumerate} \end{lemma}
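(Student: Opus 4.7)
My strategy is to translate both parts of the statement via Tonks' map $\Gamma_{n-1}\colon \Sigma_{n-1}\to \mathbf{PBT}_n$ into statements about the weak Bruhat order, and then apply Remark \ref{rem:proprestic}. Set $p_i := n_1+\cdots+n_i$ for $1\le i\le r$, together with $S := \{p_1,\ldots,p_{r-1}\}\subset \{1,\ldots,n-1\}$ and $\bar B_i := \{p_{i-1}+1,\ldots,p_i-1\}$ for $1\le i\le r$; these are, respectively, the positions of the internal nodes of the skeleton $t_0$ and of each $t_i$ inside $t$ (equivalently inside $w$).

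The key preparatory step is the following characterization, which I would prove by induction on $r$ by unwinding Definition \ref{def:Tonksmap}. A permutation $\sigma\in\Sigma_{n-1}$ lies in $\Gamma_{n-1}^{-1}\bigl(x_0\,\underline{\circ}(x_1,\ldots,x_r)\bigr)$ for some grafting of shape $(r;n_1,\ldots,n_r)$ if and only if $\sigma(S) = \{n-r+1,\ldots,n-1\}$; and when this holds, $\sigma^S \in \Gamma_{r-1}^{-1}(x_0)$ and $\sigma^{\bar B_i}\in \Gamma_{n_i-1}^{-1}(x_i)$ for every $i$. The induction step follows directly from the recursion for $\Gamma$: if $x_0 = x_0^l\veebar x_0^r$ with $|x_0^l|=j$, then the root of the grafting sits at position $p_j$, so the top value of $\sigma$ must lie there, and the left/right halves of $\sigma$ inherit the appropriate grafting shape by induction. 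A corollary, coming from the fact that $\Gamma^{-1}$-fibres are intervals and that the maximum of the fibre restricts to the maximum of each sub-fibre, is that $\max\Gamma_{n-1}^{-1}\bigl(x_0\,\underline{\circ}(x_1,\ldots,x_r)\bigr)$ has restriction to $S$ (resp.\ each $\bar B_i$) equal to the max of $\Gamma_{r-1}^{-1}(x_0)$ (resp.\ $\Gamma_{n_i-1}^{-1}(x_i)$).

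For part (1), take $\sigma := \max \Gamma_{n-1}^{-1}(t)$ and $\tau := \max \Gamma_{n-1}^{-1}(w)$. The Tamari description through $\Gamma$ recalled just before the statement gives $\sigma\leq_{wB}\tau$. Applying Remark \ref{rem:proprestic} with $S$, and with each $\bar B_i$ in turn, produces $\sigma^S\leq_{wB}\tau^S$ and $\sigma^{\bar B_i}\leq_{wB}\tau^{\bar B_i}$ in the relevant symmetric groups. By the characterization above these restrictions are exactly the maxima of $\Gamma_{r-1}^{-1}(t_0), \Gamma_{r-1}^{-1}(w_0)$ and of $\Gamma_{n_i-1}^{-1}(t_i), \Gamma_{n_i-1}^{-1}(w_i)$, so $t_0\leq_T w_0$ and $t_i\leq_T w_i$ for every $i$.

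For part (2), put $\upsilon := \max \Gamma_{n-1}^{-1}(u)$, giving $\sigma\leq_{wB}\upsilon\leq_{wB}\tau$; by the characterization it suffices to verify $\upsilon(S) = \{n-r+1,\ldots,n-1\}$. For any $s\in S$ and $p\notin S$ I apply Remark \ref{rem:proprestic} to the two-element subset $\{s,p\}$: the inequalities $\sigma^{\{s,p\}}\leq_{wB}\upsilon^{\{s,p\}}\leq_{wB}\tau^{\{s,p\}}$ live in $\Sigma_2$. Since $\sigma(s),\tau(s)$ are in $\{n-r+1,\ldots,n-1\}$ while $\sigma(p),\tau(p)$ are in $\{1,\ldots,n-r\}$, the permutations $\sigma^{\{s,p\}}$ and $\tau^{\{s,p\}}$ coincide, and the sandwich forces $\upsilon^{\{s,p\}}$ to agree, which is to say $\upsilon(s)>\upsilon(p)$. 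As this holds for all such pairs and $|S|=r-1$, the top $r-1$ values of $\upsilon$ occupy $S$ exactly, so $u$ has the required grafting shape. The main technical hurdle I anticipate is establishing the characterization of $\Gamma_{n-1}^{-1}\bigl(x_0\,\underline{\circ}(x_1,\ldots,x_r)\bigr)$ cleanly from the recursive definition of $\Gamma$; once in hand, both parts reduce to Remark \ref{rem:proprestic} applied to well-chosen subsets.
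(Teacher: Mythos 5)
Your treatment of part (1) follows the paper's own route (restrict a comparable pair of permutations in the fibres to $S$ and to the blocks, then apply Remark \ref{rem:proprestic}), and the only fact it really needs from your preparatory step --- that $\sigma\in\Gamma_{n-1}^{-1}\bigl(x_0\,{\underline{\circ}}(x_1,\dots ,x_r)\bigr)$ forces $\Gamma_{r-1}(\sigma^S)=x_0$ and $\Gamma_{n_i-1}(\sigma^{\overline{B}_i})=x_i$ --- is true, so part (1) is fine. The problem is the positional characterization on which part (2) rests: its ``only if'' direction, and the corollary about maxima of fibres, are false. Take $n=5$, $r=3$, $t={\mathbf 1}_{3}\,{\underline{\circ}}\,(\vert\veebar\vert,\ \vert,\ \vert\veebar\vert)=(\vert\veebar\vert)\veebar\bigl(\vert\veebar(\vert\veebar\vert)\bigr)$, so $(n_1,n_2,n_3)=(2,1,2)$ and $S=\{2,3\}$. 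Then $\Gamma_4^{-1}(t)=\{1432,\,2431,\,3421\}$ in one-line notation, its maximum is $\upsilon=3421$, and $\upsilon(S)=\{4,2\}\neq\{3,4\}$. The reason is that a node of the skeleton $x_0$ need not be an ancestor of the nodes of an $x_i$ lying in a different branch, so nothing forces the values of a fibre element on $S$ to be the top $r-1$ values. In particular, in your part (2) the premise ``$\sigma(s),\tau(s)\in\{n-r+1,\dots ,n-1\}$ while $\sigma(p),\tau(p)\in\{1,\dots ,n-r\}$'' already fails for $\sigma=\tau=3421$ with $s=3$, $p=1$, so the two-element restriction sandwich never gets started, and the final step ``$\upsilon(S)$ consists of the top $r-1$ values, hence $u$ has the required grafting shape'' is resting on an equivalence that does not hold.

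For comparison, the paper proves part (2) without permutations: it classifies the covering moves $u\lessdot z$ below a grafting $z=z_0\,{\underline{\circ}}(z_1,\dots ,z_r)$ and observes that descending in the Tamari order preserves the grafting format while only (weakly) decreasing the partial sums $\sum_{i\leq j}\vert u_i\vert$; sandwiching $u$ between $t$ and $w$, which have identical block sizes, then pins down $\vert u_i\vert =n_i$ for all $i$. If you want to salvage a permutation-based argument for part (2), you would need a criterion for ``$\Gamma(\upsilon)$ admits a grafting of shape $(r;n_1,\dots ,n_r)$'' that is actually correct --- essentially the recursive condition read off from Definition \ref{def:Tonksmap} --- and it is not clear that such a condition can be propagated through $\sigma\leq_{wB}\upsilon\leq_{wB}\tau$ by Remark \ref{rem:proprestic} alone.
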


\begin{proof} For the first point, let $\sigma$ and $\tau$ be permutations in $\Sigma_{n-1}$ such that $\Gamma_{n-1}(\sigma) = t_0{\underline{\circ}} (t_1,\dots ,t_r)$ and $\Gamma_{n-1}(\tau) = w_0{\underline{\circ}}(w_1,\dots ,w_r)$ and $\sigma\leq _{wB}\tau$.
Let $S=\{n_1,n_1+n_2+1,\dots ,n_1+\dots +n_{r-1}+r-2\}$, $T_1 =\{1,\dots ,n_1-1\}$ and $T_i =\{n_1+\dots +n_{i-1}+i-1,\dots , n_1+\dots +n_i+i-2\}$, for $2\leq i\leq r$. 

Applying Remark \ref{rem:proprestic}, we get that $\sigma^S\leq_{wB}\tau^S$ and $\sigma^{T_i}\leq_{wB}\tau^{T_i}$, for $1\leq i\leq n$.  

But, we have that\begin{enumerate}[(i)]
\item $\Gamma_{r-1}(\sigma^S) = t_0$ and $\Gamma_{r-1}(\tau^S) = w_0$, therefore $t_0\leq_T w_0$,
\item $\Gamma_{n_i-1}(\sigma^{T_i}) = t_i$ and $\Gamma_{n_i-1}(\tau^{T_i})= w_i$, for $1\leq i\leq r$, and therefore $t_i\leq_Tw_i$, for $1\leq i\leq r$,\end{enumerate}
which implies the result.
\medskip

For the second assertion, let $z = z_0\cdot (z_1,\dots , z_r)$ be a tree in ${\mbox{\bf PBT}_n}$.

If $u\lessdot z$, then $u$ is obtained from $z$ by one of the following movements\begin{enumerate}[(i)]
\item $u = u_0{\underline{\circ}} (z_1,\dots  ,z_r)$, with $u_0\lessdot z_0$,
\item for some $1\leq i\leq r$, there exists $u_i\lessdot z_i$ and $u=z_0{\underline{\circ}} (z_1,\dots ,z_{i-1}, u_i, z_{i+1}, \dots ,z_r)$,
\item There exists an internal node $v$ of $z_0$ and $1\leq i <r$ such that the subtree $t_v$ of $z$, whose root is $v$, is of the form $z_i \veebar z_{i+1}$ and $u$ is obtained by replacing $t_v$ in $z$ by the tree $(z_i \veebar z_{i+1}^l)\veebar z_{i+1}^r$, for $z_{i+1} = z_{i+1}^l\veebar z_{i+1}^r$. So, $u = z_0{\underline{\circ}} (z_1,\dots , z_i\veebar z_{i+1}^l, z_{i+1}^r, \dots , z_r)$. \end{enumerate} 
We may conclude that, for any $z = z_0{\underline{\circ}} (z_1,\dots ,z_r)$ and any $u\leq_T z$, we have that $u = u_0{\underline{\circ}} (u_1,\dots ,u_r)$, with $u_0\leq_T z_0$, and the trees $u_1,\dots ,u_r$ satisfy one of the following conditions  \begin{enumerate}[(i)]
\item $\vert u_i\vert = \vert z_i\vert $, for all integers $1\leq i\leq r$, and therefore $u_i\leq_T z_i$ for $1\leq i\leq r$,
\item there exists al least one integer $1\leq k\leq r-1$ such that $\vert u_k\vert < \vert z_k\vert$, and for any $1\leq j < r$, we have that $\sum_{i=1}^j\vert u_i\vert \leq \sum_{i=1}^j\vert z_i\vert$.\end{enumerate} 

Applying the result of the paragraph above to $t\leq_T u\leq_T w$, as $t = t_0{\underline{\circ}} (t_1,\dots ,t_r)$ and $w = w_0{\underline{\circ}} (w_1,\dots , w_r)$ with $\vert t_i\vert =n_i = \vert w_i\vert$ for all $1\leq i\leq r$, we conclude that $u = u_0{\underline{\circ}} (u_1,\dots , u_r)$ with $\vert u_i\vert = n_i$.
 \end{proof}
 \medskip

In \cite{LoRo1} and \cite{LoRo2}, the authors defined three associative graded products $/$, $\backslash$ and $*$  and a coproduct $\Delta $ on the vector space ${\mathbb K}\bigl[{\mbox{\bf PBT}}][-1]:=\oplus_{n\geq 1} {\mathbb K}\bigl[{\mbox{\bf PBT}_{n-1}}\bigr]$, where the degree of a tree is given by the number of its internal vertices. The product $*$ may be defined, as shown in \cite{LoRo2}, in terms of $/$, $\backslash$ and the Tamari order. Moreover, the data $({\mathbb K}\bigl[{\mbox{\bf PBT}}][-1], *, \Delta)$ is a coassociative bialgebra, and M. Aguiar and F. Sottile proved that 	
\begin{equation*} \Delta (M_t) = \sum _{t_1 /t_2 = t} M_{t_1}\otimes M_{t_2},\end{equation*}
for any $n\geq 2$ and any $t\in {\mbox{\bf PBT}_{n-1}}$, where $\{M_t\}_{t\in {\mbox{\bf PBT}_{n-1}}}$ is the Moebius basis of ${\mathbb K}\bigl[{\mbox{\bf PBT}_{n-1}}\bigr]$. 
Their formula provides a basis for the subspace of primitive elements of the coalgebra $({\mathbb K}[{\mbox{\bf PBT}}][-1], \Delta)$.
\medskip

Let us introduce a graded associative product  $\rightthreetimes $ and a coassociative graded coproduct $\overset{\circledast}{\Delta}$ on the vector space ${\mathbb K}\bigl[{\mbox{\bf PBT}}]$, and prove that Aguiar and Sottile\rq s formula holds in this new framework.

\begin{definition}  \label{def:topbot} \begin{enumerate}\item Define the binary product $\rightthreetimes $ on the vector space ${\mathbb K}\bigl[{\mbox{\bf PBT}}\bigr]$ by\begin{enumerate}
\item $1_{\K}\ \rightthreetimes\ t :=t =: t\ \rightthreetimes\ 1_{\K}$, for any planar binary rooted tree $t$, 
\item $t\ \rightthreetimes\  w = w\circ_1(t\veebar \vert)$, for any pair ot trees $t, w$.\end{enumerate} 

In fact, we have that $t\ \rightthreetimes \ w:= \bigl(t\ \rightthreetimes\  w^l\bigr)\veebar w^r = (\vert  \rightthreetimes w)\circ_1 t$, for $w = w^l\veebar w^r$ and $\vert w\vert \geq 2$. 

\item The linear map $\overset{\circledast}{\Delta}: {\mathbb K}[{\mbox{\bf PBT}}]\longrightarrow {\mathbb K}[{\mbox{\bf PBT}}]\otimes {\mathbb K}[{\mbox{\bf PBT}}]$ is defined recursively by\begin{enumerate}
\item $\overset{\circledast}{\Delta}(1_{\mathbb K} ):=1_{\mathbb K} \otimes 1_{\mathbb K}$,
\item $\overset{\circledast}{\Delta}(\vert):= 1_{\mathbb K} \otimes \vert + \vert \otimes 1_{\mathbb K}$,
\item In general, $\overset{\circledast}{\Delta}\bigl(t\veebar w\bigr) :=\sum t_{(1)}\otimes \bigl(t_{(2)}\veebar w\bigr) + \sum \bigl(t\veebar w_{(1)}\bigr)\otimes w_{(2)} - t \otimes w$, 
		
\noindent for any colored trees $t$ and $w$, where $\overset{\circledast}{\Delta}(t) = \sum t_{(1)}\otimes t_{(2)}$ for any planar binary rooted tree.\end{enumerate}
\end{enumerate}
\end{definition}
\medskip

It is immediate to check that the product $\rightthreetimes$ is associative and that $\overset{\circledast}{\Delta}$ is coassociative. 

\begin{remark} \label{rem:descriptcoprod} 
Given a planar rooted binary tree $t$ with $n$ leaves, assume that the internal vertices of $t$ are labelled, from left to right, by the elements of the set $\{1,\dots , n-1\}$. The coproduct has the following expression: 
\begin{equation*} \overset{\circledast}{\Delta}(t) = \sum_{i=0}^{n} t_{(1)}^i\otimes t_{(2)}^i, \end{equation*}
where $t_{(1)}^i$, respectively $t_{(2)}^i$, is the tree obtained by eliminating the vertex labeled with $i$ and keeping the tree on the left side, respectively keeping 
the tree on the right side, for $1\leq i < n$, $t_{(1)}^0 = 1_{\K} = t_{(2)}^n$ and $t_{(1)}^n = t = t_{(2)}^0$.  
\end{remark}
\medskip

For example, we have

\begin{center}
	\includegraphics[width=0.9\textwidth]{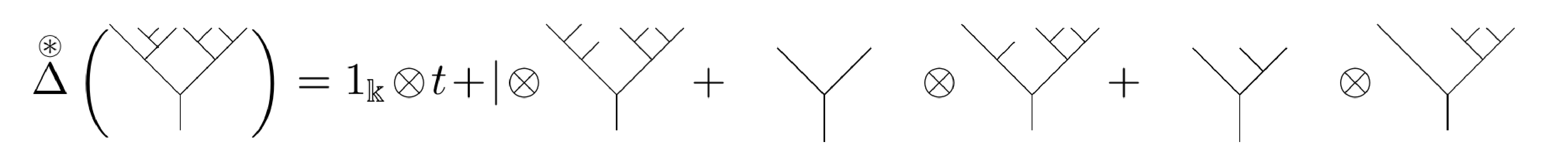}
\end{center}
\begin{center}
	\includegraphics[width=0.9\textwidth]{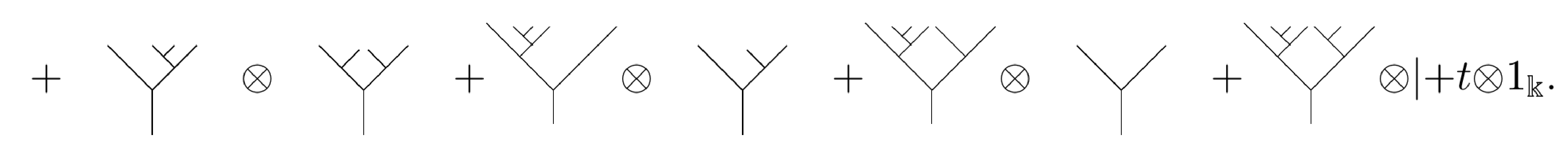}
\end{center}

\subsection{Aguiar-Sottile\rq s formula in ${\mathbb K}[{\mbox{\bf PBT}}]$}
\medskip

\begin{definition} \label{def:Moebiusbasis} For any tree $t \in {\mbox{\bf PBT}_n}$, let $M_t$ be the following element in ${\mathbb K}[{\mbox{\bf PBT}_n}]$,
\begin{equation*} M_t = \sum_{w\leq_T t}\mu(w; t) w ,\end{equation*}
where $\mu$ is the Moebius function of the Tamari order.\end{definition}
\medskip

Note that $M_t = t + \sum_{w<_T t}\mu(w; t) w$, which implies that the set $\{M_t\}_{t\in {\mbox{\bf PBT}_n}}$is a basis of ${\mathbb K}[{\mbox{\bf PBT}_n}]$, for $n\geq 2$.

We want to prove the following Lemma to give a recursive formula for $M_t$.

\begin{lemma} \label{lem:fund} Let $(P, \leq )$ be a finite lattice and $P\rq \subseteq P$ a subset closed under infimus ($\wedge$). Suppose that an element $p_0\in P\rq$ satisfies that any element $p\in P$ such that $p$ is covered by $p_0$ ($p\lessdot p_0$) belongs to $P\rq$, then  $\mu(q, p_0) = 0$ for any $q\notin P\rq$.\end{lemma}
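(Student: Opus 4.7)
The plan is to proceed by strong induction on the cardinality of the interval $[q,p_{0}]$. Since $q\notin P'$ and $p_{0}\in P'$ force $q<p_{0}$, there is something to prove. A crucial structural observation that makes the induction go through is that the hypothesis of the lemma refers only to the pair $(P',p_{0})$ and not to $q$; hence the inductive hypothesis applies verbatim with $q$ replaced by any $y\notin P'$ satisfying $y<p_{0}$, yielding $\mu(y,p_{0})=0$ as soon as the interval $[y,p_{0}]$ is strictly smaller than $[q,p_{0}]$.

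The auxiliary element on which the argument hinges is $a:=\bigwedge\bigl(P'\cap[q,p_{0}]\bigr)$. Because $p_{0}\in P'\cap[q,p_{0}]$ and $P'$ is closed under $\wedge$, this meet is well-defined and is the minimum of $P'\cap[q,p_{0}]$. Two inequalities need to be verified: $a>q$, which holds because $q\notin P'$; and $a<p_{0}$. The latter uses the coatom hypothesis: take $c$ to be the element immediately below $p_{0}$ in any maximal chain from $q$ to $p_{0}$; then $c\lessdot p_{0}$ and $c\geq q$, so by hypothesis $c\in P'\cap[q,p_{0}]$, whence $a\leq c<p_{0}$.

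The remainder is a formal Möbius computation. I would start from the identity $\mu(q,p_{0})=-\sum_{q<y\leq p_{0}}\mu(y,p_{0})$ and split this sum according to whether $y\in P'$. The terms with $y\notin P'$ (necessarily satisfying $q<y<p_{0}$) vanish by induction, and since $P'\cap(q,p_{0}]=P'\cap[a,p_{0}]$ one obtains
\[
\mu(q,p_{0})=-\sum_{y\in P'\cap[a,p_{0}]}\mu(y,p_{0}).
\]
The same "split and discard by induction" trick, applied now to the identity $\sum_{a\leq y\leq p_{0}}\mu(y,p_{0})=0$ (valid because $a<p_{0}$), shows that the sum on the right also vanishes, completing the proof.

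The one point requiring care — and in my view the only genuine obstacle — is to state the induction cleanly so that the hypothesis is reapplied to the correct pair at each invocation; the decisive fact making this legitimate is the $q$-independence of the hypothesis noted in the first paragraph. Once that is set up, the verifications that $q<a<p_{0}$ and the two Möbius-function splittings are routine.
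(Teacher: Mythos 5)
Your proof is correct and follows essentially the same route as the paper's: both hinge on the meet of the $P'$-elements above $q$ lying strictly between $q$ and $p_0$ (the coatom hypothesis giving the upper strict inequality), and on killing the non-$P'$ terms by induction so that the M\"obius recurrence $\sum_{a\le y\le p_0}\mu(y,p_0)=\delta_{a,p_0}=0$ finishes the computation. The only differences are cosmetic: you induct on the cardinality of the interval $[q,p_0]$ where the paper inducts on the maximal length of a chain of elements outside $P'$, and you are somewhat more explicit than the paper in justifying $a<p_0$.
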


\begin{proof} For $q\not\leq p_0$ the result is evident.

Assume that $q\notin P\rq$ and $q < p_0$. A {\it chain} from $q$ to $( P\rq, p_0)$ is a sequence 
\begin{equation*} q = q_1 <q_2 <\dots < q_r < p \leq p_0,\end{equation*}
such that $q_i\notin P\rq$, for $1\leq i\leq r$ and $p\in P\rq$.

Let $r$ be the largest positive integer such that there exists a chain from $q$ to $( P\rq, p_0)$ of length $r$. We proceed by recursion on $r$. As $q\notin P\rq$, we have that $r\geq 1$.

For $r = 1$, let $S_{q,P\rq}:=\{ p\in P\rq\mid q < p\}$, the element $p_1:=\wedge_{p\in S_{q,P\rq}} p$ belongs to $P\rq$ and $q\lessdot p_1$. Therefore, we have
\begin{equation*} \mu(q, p_0) = - \sum_{ p_1\leq p\rq\leq p_0}\mu (p\rq, p_0) =-\delta_{p_1,p_0}= 0 ,\end{equation*}
 because $p_1 <p_0$.
 \medskip

Assume that $r > 1$ and the result is true for any $q\rq\notin P\rq$ such that $q < q\rq < p_0$. As in the previous case, the element $p_1:=\wedge_{p\in S_{q,P\rq}} p$ belongs to $P\rq$ and $p_1 < p_0$, therefore
\begin{equation*} \mu (q, p_0)= - \sum_{q< z\leq p_0}\mu(z,p_0)= - \sum _{p_1 \leq z\leq p_0}\mu(z; p_0) = -\delta_{p_1,p_0}=0,\end{equation*}
which ends the proof.\end{proof}

\begin{notation} \label{notn:elementsinPB}  Let ${\mathbf 1}_{n}$ denote the largest element of ${\mbox{\bf PBT}_n}$ for the Tamari order and ${\mathbf 0}_{n}$ be the smallest one. We have that \begin{enumerate}
\item ${\mathbf 1}_{2} = \scalebox{0.05}{\includegraphics[width=0.6\textwidth]{dib4.eps}}
= {\mathbf 0}_{2}$ is the unique planar binary rooted tree with two leaves.
\item  ${\mathbf 1}_{n} = \vert \veebar  {\mathbf 1}_{n-1} $, while ${\mathbf 0}_{n} =  {\mathbf 0}_{n-1} \veebar \vert$, for $n\geq 3$.\end{enumerate}
\medskip

For any $n\geq 3$, a tree $t\in {\mbox{\bf PBT}_n}$ may be written as $t = {\mathbf 0}_{s}{\underline{\circ}} (\vert , t_2,\dots ,t_s)$, for a unique positive integer $s\geq 2$ and unique trees $t_i\in {\mbox{\bf PBT}_{n_i}}$, for $2\leq i\leq s$, 
with $\sum_{i=1}^sn_i  = n-1$.

In a similar way, there exist a unique way to write down a tree $t\in {\mbox{\bf PBT}_n}$, for $n\geq 3$, as $t = {\mathbf 1}_{r}{\underline{\circ}} (t_1,\dots, t_{r-1}, \vert)$, with $t_i\in {\mbox{\bf PBT}_{m_i}}$, for $1\leq i\leq r-1$, 
with $\sum_{i=1}^{r-1}m_i = n-1$.

\end{notation}
\medskip

For $n > 2$ consider the subsets $P_{1n} :=\{ \vert \veebar t\mid t\in {\mbox{\bf PBT}_{n-1}}\}$ and $P_{2n} := \{ \vert \rightthreetimes t\mid t\in {\mbox{\bf PBT}_{n-1}}\}$ of ${\mbox{\bf PBT}_n}$.

We denote by $P_n\rq$ the disjoint union of $P_{1n}$ and $P_{2n}$. Clearly, the tree ${\mathbf 1}_{n}$ belongs to $P_{1n}$ and any planar rooted tree $t$ covered by ${\mathbf 1}_{n}$ belongs to $P_n\rq$.
\begin{remark} We have that\begin{enumerate}[(a)]
\item $(\vert\veebar t)\ \wedge\ (\vert\veebar w ) = \vert \veebar (t\wedge w)$, for any pair of trees $t,w \in {\mbox{\bf PBT}_{n-1}}$,
\item $(\vert\ \rightthreetimes\  t)\ \wedge\ (\vert \rightthreetimes \ w ) = \vert\ \rightthreetimes \ (t\wedge w)$, for any $t,w\in {\mbox{\bf PBT}_{n-1}}$,
\item Any element $z\leq_T (\vert\ \rightthreetimes\ w)$ is of the form $z = \vert\ \rightthreetimes\ {\overline z}$ in ${\mbox{\bf PBT}_{n}}$. So the minimum 
$(\vert\veebar t)\ \wedge\ (\vert\ \rightthreetimes\ w)$ in ${\mbox{\bf PBT}_{n}}$ is of the form $\vert\ \rightthreetimes\ {\overline z}$ for some ${\overline z}\in  {\mbox{\bf PBT}_{n-2}}$, for any pair of trees $t\in  {\mbox{\bf PBT}_{n-1}}$ and $w\in  {\mbox{\bf PBT}_{n-1}}$.\end{enumerate}
Therefore, $P_n\rq $ is closed under the $\wedge$. 
\end{remark}
\medskip

To give a recursive formula for the element $M_{{\mathbf 1}_{n}}$, we need the following Lemma.

\begin{lemma} \label{lem:auxP} Let $t =  \vert\ \rightthreetimes\ {\overline t}$ be an element in $P_{2n}$, with ${\overline t} = {\mathbf 0}_{r-1}{\underline{\circ}} (\vert , t_3,\dots ,t_r) \in  {\mbox{\bf PBT}_{n-1}}$. 
Any tree $z\in P_{1n}$ such that $t <_T z$ satisfies that $t\rq = {\mathbf 0}_{r-1}{\underline{\circ}} (\vert , \vert \veebar t_3,\dots ,t_r) \leq _T z$.\end{lemma}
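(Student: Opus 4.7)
The plan is to observe that $t \lessdot_T t'$ is a single Tamari rotation, then to prove $t' \leq_T z$ by induction on the length of a Tamari chain from $t$ to $z$. Unfolding $t = \vert \rightthreetimes \overline{t}$ via the definition of $\rightthreetimes$ gives $t = \mathbf{0}_r \underline{\circ}(\vert, \vert, t_3, \ldots, t_r)$, and $t'$ arises from $t$ by replacing the topmost left-spine subtree $(\vert \veebar \vert) \veebar t_3$ with $\vert \veebar (\vert \veebar t_3)$, confirming $t \lessdot_T t'$.

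Next I would enumerate the covers of $t$ in $\mathbf{PBT}_n$ for the Tamari order. Besides $t'$, these are the spine rotations at $n_k$ for $3 \leq k \leq r-1$ producing
\begin{equation*}
u_k = \mathbf{0}_{r-1}\underline{\circ}(\vert, \vert, t_3, \ldots, t_{k-1}, t_k \veebar t_{k+1}, t_{k+2}, \ldots, t_r),
\end{equation*}
and the inner rotations $\mathbf{0}_r\underline{\circ}(\vert, \vert, t_3, \ldots, t_j', \ldots, t_r)$ with $t_j \lessdot_T t_j'$. All covers other than $t'$ keep $\vert \veebar \vert$ at the top-left and hence lie in $P_{2n}$; so for a length-one chain to $P_{1n}$ one must have $z = t'$. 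In a longer chain $t \lessdot_T u_1 \lessdot_T \cdots \lessdot_T u_k = z$, either $u_1 = t'$ (and we are done), or $u_1 \in P_{2n}$ and $u_1 = \vert \rightthreetimes \overline{u_1}$ is of the form treated by the lemma (with spine parameter $r - 1$ and adjacent grafts $t_k, t_{k+1}$ merged into $t_k \veebar t_{k+1}$ in the spine-rotation case); the inductive hypothesis then gives $u_1' \leq_T z$.

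It remains to verify $t' \leq_T u_1'$. For an inner rotation in $t_j$ this is immediate from monotonicity of grafting. For a spine rotation at $n_k$ with $k > 3$, $u_1'$ is obtained from $t'$ by the single Tamari rotation
\begin{equation*}
\mathbf{0}_{r-1}\underline{\circ}(g_1, \ldots, g_{r-1}) \leq_T \mathbf{0}_{r-2}\underline{\circ}(g_1, \ldots, g_i \veebar g_{i+1}, \ldots, g_{r-1})
\end{equation*}
that merges the two grafts containing $t_k$ and $t_{k+1}$. The boundary case $k = 3$ merges the grafts $\vert \veebar t_3$ and $t_4$ of $t'$ into $(\vert \veebar t_3) \veebar t_4$; an additional rotation $(\vert \veebar t_3) \veebar t_4 \leq_T \vert \veebar (t_3 \veebar t_4)$ inside that graft then reaches $u_1'$. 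This boundary case is the main delicacy, because the merge interacts with the already-rotated factor $\vert \veebar t_3$ sitting in $t'$. Combining the inequalities gives $t' \leq_T u_1' \leq_T z$, closing the induction.
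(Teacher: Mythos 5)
Your argument is correct, and it follows a genuinely different route from the paper's. The paper fixes the spine parameter $r$ as the induction variable: for $r=3$ it treats the claim as immediate, and for $r>3$ it locates an intermediate tree $w=(\vert\veebar w_1)\veebar w_2$ with $t\leq_T w\leq_T \vert\veebar(w_1\veebar w_2)\leq_T z$ (the stage at which the root's left subtree collapses to a leaf), splits the grafted factors of $t$ between $w_1$ and $w_2$, and recurses on the left factor. You instead induct on the length of a saturated chain from $t$ to $z$, which forces you to enumerate all covers of $t=\mathbf{0}_r\underline{\circ}(\vert,\vert,t_3,\dots,t_r)$ and to check, for each cover $u\neq t\rq$ (necessarily in $P_{2n}$), that $t\rq\leq_T u\rq$; the chain from $u$ to $z$ is shorter, so the inductive hypothesis gives $u\rq\leq_T z$. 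What your version buys is explicitness: the cover enumeration (the unique cover leaving $P_{2n}$ is $t\rq$, all spine merges $u_k$ and inner rotations stay in $P_{2n}$) and the compatibility $t\rq\leq_T u\rq$ — including the delicate $k=3$ case where reaching $u\rq$ from $t\rq$ requires a second rotation inside the merged graft $(\vert\veebar t_3)\veebar t_4$ — are exactly the facts the paper's existence claim for $w$ quietly relies on, so your proof is the more self-contained of the two, at the cost of a longer case analysis. One cosmetic point: reserve the prime for the operation $u\mapsto u\rq$ of the lemma and use a different symbol (say $t_j^{+}$) for a Tamari cover of a grafted factor $t_j$, since you currently overload $t_j\rq$ with both meanings.
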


\begin{proof} Note that $t\lessdot t\rq$.

The result for $r=2$ is clear. To prove that $t\rq \leq_T z$, we proceed by induction on $r$.

For $r = 3$, assume that $z = \vert \veebar {\overline z}$. Since $t =  \vert\ \rightthreetimes\ {\overline t}$,
 it is immediate to see that $t\rq = \vert \veebar (\vert \veebar {\overline t} )\leq_T z$.
\medskip

For $r > 3$, there exists a tree $w = ( \vert\veebar w_1)\veebar w_2$ such that $t\leq_Tw\leq_T \vert\veebar (w_1\veebar w_2)\leq_T z$. In this case, there exists $2\leq j < r$ such that 
${\mathbf 0}_{i} {\underline{\circ}}(\vert ,\vert, t_3,\dots , t_j)\leq_T  \vert\veebar w_1$ and ${\mathbf 0}_{r-j+1} {\underline{\circ}} (\vert , t_{j+1},\dots , t_r)\leq_T \vert\veebar w_2$. Applying a recursive argument, we get that 
\begin{equation*} {\mathbf 0}_{i-1} {\underline{\circ}} (\vert ,\vert\veebar t_3,\dots , t_i)\leq_T \vert\veebar w_1.\end{equation*}

And therefore,  
\begin{equation*}  t\rq = ({\mathbf 0}_{i-1} {\underline{\circ}} (\vert ,\vert\veebar t_3,\dots , t_i)) / ({\mathbf 0}_{i-1} {\underline{\circ}} (\vert ,\vert\veebar t_3,\dots , t_i)) \leq_T  (\vert\veebar w_1) \ (\vert \veebar w_2)\leq _T z,\end{equation*} which proves that $t\rq \leq_T z$.
\end{proof}

\begin{proposition} \label{Moebiuisofcomb} The element $M_{{\mathbf1}_{n}}\in {\mathbb K}[{\mbox{\bf PBT}_n}]$ is defined recursively by the following formulas\begin{enumerate}
\item $M_{{\mathbf 1}_{2}} = \scalebox{0.05}{\includegraphics[width=0.6\textwidth]{dib4.eps}
}$,
\item $M_{{\mathbf 1}_{n}} = \vert \veebar M_{{\mathbf1}_{n-1}}\ -\ \vert\ \rightthreetimes\ M_{{\mathbf 1}_{n-1}}$.\end{enumerate}
Moreover, we have that ${\displaystyle M_{{\mathbf1}_{n}} = \sum_{i=1}^{n-1}(-1)^{i-1} {\mathbf 0}_{i} \veebar M_{{\mathbf 1}_{ n-i}}}$, for $n\geq 2$.\end{proposition}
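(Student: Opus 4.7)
The plan is to prove formula (1) by induction on $n$ and then derive (2) by iterating (1). The base case $n=2$ is immediate from ${\mbox{\bf PBT}_2} = \{{\mathbf 1}_2\}$.

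For the inductive step giving (1), I would first apply Lemma~\ref{lem:fund} with $P' = P_n' = P_{1n}\sqcup P_{2n}$ and $p_0 = {\mathbf 1}_n$. The closure of $P_n'$ under $\wedge$ is recorded in the remark preceding Lemma~\ref{lem:auxP}. For the covers hypothesis, each $w\lessdot {\mathbf 1}_n$ arises from a single Tamari un-rotation at an internal node of ${\mathbf 1}_n$: at the root this yields ${\mathbf 0}_2\veebar {\mathbf 1}_{n-2} = \vert\rightthreetimes {\mathbf 1}_{n-1}\in P_{2n}$; at any deeper internal node the operation preserves the outermost $\vert\veebar$ structure, giving a tree in $P_{1n}$. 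Lemma~\ref{lem:fund} then shows $\mu(w;{\mathbf 1}_n) = 0$ for $w\notin P_n'$, so $M_{{\mathbf 1}_n}$ is supported on $P_n'$. For $w = \vert\veebar w'\in P_{1n}$, Lemma~\ref{lem:graftfortrees}(2) applied to the factorisations $\vert\veebar w' = {\mathbf 1}_2\underline{\circ}(\vert,w')$ and ${\mathbf 1}_n = {\mathbf 1}_2\underline{\circ}(\vert,{\mathbf 1}_{n-1})$ gives an order isomorphism $[\vert\veebar w',{\mathbf 1}_n]\cong [w',{\mathbf 1}_{n-1}]$ via $u'\leftrightarrow \vert\veebar u'$. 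Hence $\mu(\vert\veebar w';{\mathbf 1}_n) = \mu(w';{\mathbf 1}_{n-1})$, and the $P_{1n}$-contribution to $M_{{\mathbf 1}_n}$ equals $\vert\veebar M_{{\mathbf 1}_{n-1}}$.

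The crux, and the main obstacle, is to prove $\mu(\vert\rightthreetimes w'';{\mathbf 1}_n) = -\mu(w'';{\mathbf 1}_{n-1})$ for $w''\in {\mbox{\bf PBT}_{n-1}}$. I would proceed by secondary downwards induction on $w''$ in the Tamari lattice ${\mbox{\bf PBT}_{n-1}}$. The base case $w'' = {\mathbf 1}_{n-1}$ is direct since $\vert\rightthreetimes {\mathbf 1}_{n-1}\lessdot {\mathbf 1}_n$. For the inductive step, I rewrite the defining relation $\sum_{z\geq_T \vert\rightthreetimes w''}\mu(z;{\mathbf 1}_n) = 0$ using Lemma~\ref{lem:fund} as a sum over $P_n'$ and split it. The $P_{2n}$-part, using that $\vert\rightthreetimes$ is an order-preserving bijection ${\mbox{\bf PBT}_{n-1}}\to P_{2n}$ (grafting $\vert\veebar\vert$ at the leftmost leaf commutes with Tamari rotations away from that leaf), collapses by the secondary inductive hypothesis to $-\bigl(\delta(w'',{\mathbf 1}_{n-1}) - \mu(w'';{\mathbf 1}_{n-1})\bigr)$. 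For the $P_{1n}$-part, Lemma~\ref{lem:auxP} is decisive: combined with $\vert\veebar w''\geq_T \vert\rightthreetimes w''$, it identifies $\{z\in P_{1n}: z>_T\vert\rightthreetimes w''\}$ with $\{\vert\veebar u': u'\geq_T w''\}$, giving $\delta(w'',{\mathbf 1}_{n-1})$ by the defining property of $\mu$ on ${\mbox{\bf PBT}_{n-1}}$. The two $\delta$-terms cancel, yielding the claimed equality.

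Combining the $P_{1n}$- and $P_{2n}$-pieces yields (1). For (2), I iterate (1): substituting $M_{{\mathbf 1}_{n-1}}$ inside the $\vert\rightthreetimes$ term and applying $t\rightthreetimes(\vert\veebar w) = (t\veebar\vert)\veebar w$ (from Definition~\ref{def:topbot}) together with associativity of $\rightthreetimes$, one obtains ${\mathbf 0}_i\rightthreetimes(\vert\veebar w) = {\mathbf 0}_{i+1}\veebar w$ and ${\mathbf 0}_i\rightthreetimes(\vert\rightthreetimes w) = {\mathbf 0}_{i+1}\rightthreetimes w$. After $n-2$ such substitutions one gets $M_{{\mathbf 1}_n} = \sum_{i=1}^{n-2}(-1)^{i-1}{\mathbf 0}_i\veebar M_{{\mathbf 1}_{n-i}} + (-1)^{n-2}{\mathbf 0}_{n-2}\rightthreetimes M_{{\mathbf 1}_2}$; since $M_{{\mathbf 1}_2} = \vert\veebar\vert$ gives ${\mathbf 0}_{n-2}\rightthreetimes M_{{\mathbf 1}_2} = {\mathbf 0}_{n-1}\veebar\vert = {\mathbf 0}_{n-1}\veebar M_{{\mathbf 1}_1}$, the expression collapses to the stated alternating sum.
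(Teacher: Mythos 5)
Your strategy is the paper's own: restrict the support of $M_{{\mathbf 1}_n}$ to $P_n'=P_{1n}\sqcup P_{2n}$ via Lemma~\ref{lem:fund}, compute the $P_{1n}$ contribution from $\mu(\vert\veebar w';{\mathbf 1}_n)=\mu(w';{\mathbf 1}_{n-1})$, prove $\mu(\vert\rightthreetimes w'';{\mathbf 1}_n)=-\mu(w'';{\mathbf 1}_{n-1})$ by a secondary top-down induction, and obtain the alternating sum by iterating formula (1). Your verification of the hypotheses of Lemma~\ref{lem:fund}, your interval-isomorphism justification of the $P_{1n}$ step via Lemma~\ref{lem:graftfortrees}(2), and your unwinding of the second formula (using ${\mathbf 0}_{i}\rightthreetimes(\vert\veebar w)={\mathbf 0}_{i+1}\veebar w$ and ${\mathbf 0}_{i}\rightthreetimes(\vert\rightthreetimes w)={\mathbf 0}_{i+1}\rightthreetimes w$) are all correct and match, or slightly sharpen, what the paper does.

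The one step that does not go through as written is the $P_{1n}$ cross-term in the secondary induction. You assert that Lemma~\ref{lem:auxP} together with the single inequality $\vert\veebar w''\geq_T\vert\rightthreetimes w''$ identifies $\{z\in P_{1n}\mid z>_T\vert\rightthreetimes w''\}$ with $\{\vert\veebar u'\mid u'\geq_T w''\}$. But Lemma~\ref{lem:auxP} only supplies the lower bound $t'={\mathbf 0}_{r-1}{\underline{\circ}}(\vert,\vert\veebar t_3,\dots,t_r)$ for such $z$, and for $r\geq 4$ one has $t'<_T\vert\veebar w''$ strictly (indeed $t'$ is not even of the form $\vert\veebar(\,\cdot\,)$); the containment $\{z\in P_{1n}\mid z\geq_T t'\}\subseteq\{z\in P_{1n}\mid z\geq_T\vert\veebar w''\}$ is therefore precisely what remains to be proved and is not delivered by the facts you cite. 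The identification is true --- for instance because deleting the first leaf is an order-preserving map ${\mathbf {PBT}}_n\to{\mathbf {PBT}}_{n-1}$ sending $\vert\veebar u'\mapsto u'$ and $\vert\rightthreetimes w''\mapsto w''$ --- but you must either add such an argument or evaluate the sum the way the paper does: using $t\lessdot t'$, rewrite $\sum_{t<_Tz,\,z\in P_{1n}}\mu(z,{\mathbf 1}_n)$ as $\sum_{t'\leq_Tz,\,z\in P_{1n}}\mu(z,{\mathbf 1}_n)$, then note that $P_{2n}$ is a down-set (item (c) of the remark preceding Lemma~\ref{lem:auxP}) not containing $t'$, so that by Lemma~\ref{lem:fund} this equals the full M\"obius sum $\sum_{t'\leq_Tz\leq_T{\mathbf 1}_n}\mu(z,{\mathbf 1}_n)=\delta_{t',{\mathbf 1}_n}=\delta_{w'',{\mathbf 1}_{n-1}}$. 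With either patch you recover the value $\delta(w'',{\mathbf 1}_{n-1})$, the two $\delta$'s cancel as you say, and the rest of the proof is sound.
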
 

\begin{proof} 

Applying Lemma \ref{lem:fund}, we get that $\mu(t, {\mathbf 1}_{n}) = 0$ for any $t\notin P_n\rq$. So, we have that
\begin{equation*} M_{{\mathbf 1}_{n}} =\sum _{t\in P_{1n}}\mu(t, {\mathbf 1}_{n})t + \sum_{t\in P_{2n}} \mu(t, {\mathbf 1}_{n})t .\end{equation*}

For the first term, we get 
\begin{align*} \sum _{t\in P_{1n}}\mu(t, {\mathbf 1}_{n})t = & \sum_{{\overline t}\in {\mathbf {PBT}}_{n-1}} \mu (\vert \veebar {\overline t}, {\mathbf 1}_{n}) \left(\vert \veebar {\overline t} \right)=\\
&\sum_{{\overline t}\in {\mathbf {PBT}}_{n-1}} \mu ({\overline t}, {\mathbf 1}_{n-1})\left( \vert \veebar {\overline t} \right)= \vert \veebar M_{{\mathbf 1}_{n-1}}.\end{align*}

For the second one, let $t =  \vert\ \rightthreetimes\ {\overline t} \in P_{2n}$. We have that
\begin{align*} \mu (t , {\mathbf 1}_{n}) &= - \sum_{t <_T w\in P_{1n}}\mu (w, {\mathbf 1}_{n}) - \sum_{t <_T w\in P_{2n}}\mu (w, {\mathbf 1}_{n}) =\\
&- \sum_{t\rq \leq_T w\in P_{1n}}\mu (w, {\mathbf 1}_{n})  -  \sum_{{\overline t} <_T \bar{w}\leq_T {\mathbf 1}_{n-1}} \mu (\vert\ \rightthreetimes\ \bar{w} ,  {\mathbf 1}_{n}),\end{align*}
where $t =  {\mathbf 0}_{r}{\underline{\circ}} (\vert , \vert , t_3,\dots ,t_r)$ and $t\rq = {\mathbf 0}_{r-1}{\underline{\circ}} (\vert , \vert \veebar t_3,\dots ,t_r)$.

But 
 \begin{equation*}\sum_{t\rq \leq_T w\in P_{1n}}\mu (w, {\mathbf 1}_{n}) = \begin{cases} 1,& {\rm for}\ t\rq =  {\mathbf 1}_{n},\\
0,&\ {\rm otherwise.}\end{cases}\end{equation*}

So, we have that 
\begin{equation*} \mu (t , {\mathbf 1}_{n}) = -\delta_{t\rq,  {\mathbf 1}_{n}} - \sum_{t <_T w\in P_{2n}}\mu (w, {\mathbf 1}_{n}).\end{equation*}

For any $w\in P_{2n}$, let $h(w)$ be the number of elements $z\in P_{2n}$ such that $w\leq _T z\leq_T {\mathbf 1}_{n}$.
We want to prove that $\mu (w, {\mathbf 1}_{n}) = - \mu ({\overline w},  {\mathbf 1}_{n-1})$  for $w = \vert\ \rightthreetimes\ {\overline w}$ by induction in $h(w)$.
 \begin{enumerate}[(i)]
\item If $h(w) = 1$, then $w =  \vert\ \rightthreetimes\ {\mathbf 1}_{n-1}$ and $\mu (\vert\ \rightthreetimes\ {\mathbf 1}_{n-1} , {\mathbf 1}_{n}) = -1 = -\mu ( {\mathbf 1}_{n-1},  {\mathbf 1}_{n-1})$.
\item \bigbreak Assume that $\mu (w, {\mathbf 1}_{n}) = - \mu ({\overline w},  {\mathbf 1}_{n-1})$ with $h(w) < n$. 

Consider $w$ with $h(w)=n$, then,
\begin{equation*}\mu (w,  {\mathbf 1}_{n}) = - \sum_{{w< _T z\in P_{2n}}} \mu( z,  {\mathbf 1}_{n}).\end{equation*}
Using that $h(z) < h(w)$ and the inductive hypothesis,
\begin{equation*}\mu (w,  {\mathbf 1}_{n}) =\sum_{{\overline w} < _T{\overline z}\leq_T{\mathbf 1}_{n-1}}\mu ({\overline z}, {\mathbf 1}_{n-1}) = -\mu({\overline w}, {\mathbf 1}_{n-1}).\end{equation*}\end{enumerate}

The previous argument shows that
\begin{equation*}\sum_{t\in P_{2n}} \mu(t, {\mathbf 1}_{n})\left(\vert\ \rightthreetimes\ {\overline t}\right) = - \sum_{{\overline t}\in {\mathbf {PBT}_{n-1}}}\mu ({\overline t},  {\mathbf 1}_{n-1})\left(\vert\ \rightthreetimes\ {\overline t}\right)  =
-\ \vert\ \rightthreetimes\ M_{{\mathbf 1}_{n-1}} \end{equation*}
which ends the proof of the first formula.
\medskip

For the second point, it follows using a recursive argument. The result is trivial for $n = 2$. 

For $n\geq 2$, assume that the result is true for $n-1$, that is
\begin{equation*} M_{{\mathbf1}_{n-1}} = \sum_{i=1}^{n-2}(-1)^{i+1} {\mathbf 0}_{ i} \veebar M_{{\mathbf 1}_{ n-i-1}}.\end{equation*}

When we compute $M_{{\mathbf1}_{n}}$, we obtain
\begin{align*} M_{{\mathbf1}_{n}} = &\vert \veebar M_{{\mathbf1}_{n-1}} - \vert\ \rightthreetimes\  (\sum_{i=1}^{n-2} (-1)^{i-1}{\mathbf 0}_{ i} \veebar M_{{\mathbf 1}_{ n-i-1}})=\\
&M_{{\mathbf1}_{1}} \veebar M_{{\mathbf1}_{n-1}} + \sum_{i=1}^{n-2}(-1)^{i} ( {\mathbf 0}_{ i} \veebar M_{{\mathbf 1}_{n-i-1}})\circ _1 \scalebox{0.05}{\includegraphics[width=0.6\textwidth]{dib4.eps}
}=\\
&M_{{\mathbf1}_{1}} \veebar M_{{\mathbf1}_{n-1}} + \sum_{i=2}^{n-1}(-1)^{i-1} {\mathbf 0}_{i} \veebar M_{{\mathbf 1}_{n-i}} =\sum_{i=1}^{n-1}(-1)^{i-1}{\mathbf 0}_{ i} \veebar M_{{\mathbf 1}_{n-i}} \end{align*}

\end{proof}

We want now to give a recursive formula for $M_t$ for any planar binary rooted tree $t\in {\mathbf {PBT}_{n}}$. 

\begin{proposition} \label{prop:Mtforanyt} Let $t$ be a planar binary rooted tree with $n$ leaves, such that $ t = {\mathbf 1}_{r}{\underline{\circ}} (t_1,\dots , t_{r-1}, \vert)$, with $r < n$ and $t_j\in {\mathbf {PBT}_{n_j}}$, for $1\leq j < r$. 
The Moebius element is given by the recursive formula
\begin{equation*} M_t = M_{{\mathbf 1}_{r}}{\underline{\circ}} (M_{t_1},\dots ,M_{t_{r-1}}, \vert).\end{equation*}\end{proposition}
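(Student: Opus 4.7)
The plan is to reduce the claim to Moebius multiplicativity on a product of Tamari intervals. I introduce the subset
\[
P' := \{u \leq_T t : u = u_0\,\underline{\circ}\,(u_1,\dots,u_{r-1},\vert)\text{ for some }u_0\in\mathbf{PBT}_r,\ u_i\in\mathbf{PBT}_{n_i}\}
\]
of the principal order ideal $[\mathbf{0}_n,t]$. The decomposition above, when it exists, is unique since the arity pattern $(n_1,\dots,n_{r-1},1)$ determines the cuts. The goal is to show that $\mu(\cdot,t)$ is supported on $P'$ and that the Moebius coefficients factor over the grafting.

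To establish the support I apply Lemma~\ref{lem:fund}. First, every $u\lessdot t$ lies in $P'$: a reverse rotation of $t$ is either entirely contained in some $t_i$ (preserving the outer $\mathbf{1}_r$) or combines two adjacent spine segments of the outer $\mathbf{1}_r$, in which case the resulting tree decomposes with outer $u_0 = \mathbf{1}_{r-1}\circ_i(\vert\veebar\vert)\in\mathbf{PBT}_r$ and the same factors $t_1,\dots,t_{r-1},\vert$. Second, $P'$ is closed under the meet of $[\mathbf{0}_n,t]$: letting $\tau := \mathbf{0}_r\,\underline{\circ}\,(\mathbf{0}_{n_1},\dots,\mathbf{0}_{n_{r-1}},\vert)$, monotonicity of $\underline{\circ}$ gives $\tau\leq_T u$ for every $u\in P'$, so for $u,v\in P'$ we have $\tau\leq_T u\wedge v\leq_T u$; since $\tau$ and $u$ share the arity pattern $(n_1,\dots,n_{r-1},1)$, Lemma~\ref{lem:graftfortrees}(2) forces $u\wedge v$ to share it too. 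Lemma~\ref{lem:fund} then yields $\mu(z,t)=0$ for all $z\in[\mathbf{0}_n,t]\setminus P'$, so $M_t=\sum_{u\in P'}\mu(u,t)\,u$.

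To factor the Moebius coefficients on $P'$, the grafting decomposition gives a bijection
\[
P' \;\longleftrightarrow\; [\mathbf{0}_r,\mathbf{1}_r]\times\prod_{i=1}^{r-1}[\mathbf{0}_{n_i},t_i]
\]
which is order-preserving by Lemma~\ref{lem:graftfortrees}(1) and order-reflecting by monotonicity of $\underline{\circ}$ (a consequence of Definition~\ref{def:Tamari}(2)(b), since $\underline{\circ}$ is built from iterated $\veebar$). The vanishing of $\mu(\cdot,t)$ off $P'$ implies that the restriction of the ambient Moebius function to $P'$ coincides with the Moebius function of $P'$ as a poset in its own right, and Moebius multiplicativity on the product gives $\mu(u,t)=\mu(u_0,\mathbf{1}_r)\prod_i\mu(u_i,t_i)$. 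Substituting this into $M_t$ and using multilinearity of $\underline{\circ}$ yields the claimed identity.

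The main obstacle is the closure of $P'$ under meet; the key trick is the sandwich $\tau\leq_T u\wedge v\leq_T u$, which allows Lemma~\ref{lem:graftfortrees}(2) to transfer the arity pattern to the meet. Without this, one would have to grapple directly with the possibility that a lower bound of two elements of $P'$ fails to admit a decomposition with the given arity pattern, which can genuinely happen.
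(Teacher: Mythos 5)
Your proof is correct and follows essentially the same strategy as the paper's: the same subset $P'$ of trees decomposing with the arity pattern $(n_1,\dots,n_{r-1},1)$, the same analysis of the covers of $t$, Lemma~\ref{lem:fund} to conclude that $\mu(\cdot,t)$ vanishes off $P'$, and factorization of the Moebius coefficients over the grafting. The only divergences are in two sub-steps, where your treatment is if anything more complete: you obtain closure of $P'$ under $\wedge$ by sandwiching $u\wedge v$ between $\tau=\mathbf{0}_r\,{\underline{\circ}}\,(\mathbf{0}_{n_1},\dots,\mathbf{0}_{n_{r-1}},\vert)$ and $u$ and invoking Lemma~\ref{lem:graftfortrees}(2), whereas the paper exhibits the componentwise meet explicitly using both parts of that lemma; and you justify $\mu(u,t)=\mu(u_0,\mathbf{1}_r)\prod_i\mu(u_i,t_i)$ via the order isomorphism of $P'$ with a product of intervals, a point the paper leaves as ``an easy recursive argument.''
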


\begin{proof} For $t = {\mathbf 1}_{r} {\underline{\circ}} (t_1,\dots , t_{r-1},\vert)$, let 
\begin{equation*}P\rq :=\{ w{\underline{\circ}} (w_1,\dots , w_{r-1},\vert )\mid w\in {\mathbf {PBT}_{r}}\ {\rm and}\ w_i\leq_T t_i,\ {\rm for}\ 1\leq i < r\} \subseteq {\mathbf {PBT}_{n}}.\end{equation*}

Note that any tree $z\in {\mathbf {PBT}_{n}}$ satisfying that $z\lessdot t$ fulfills one of the following conditions\begin{enumerate}[(i)]
\item $z = {\mathbf 1}_{r} {\underline{\circ}} (t_1,\dots , z_i,\dots  , t_{r-1}, \vert)$ with $z_i\lessdot t_i$ for some $1\leq i < r$. 
\item $z = {\mathbf 1}_{r-1} {\underline{\circ}} (t_1,\dots, t_i\veebar t_{i+1}, \dots, t_{r-1}, \vert)$, for some $1\leq i < r-1$.  

\noindent In this case, $z = ({\mathbf 1}_{r-1}\circ_i \scalebox{0.04}{\includegraphics[width=0.5\textwidth]{dib4.eps}
}) {\underline{\circ}} (t_1,\dots ,t_{r-1}, \vert)$, with ${\mathbf 1}_{r-1}\circ_i \scalebox{0.04}{\includegraphics[width=0.5\textwidth]{dib4.eps}
}  \lessdot \ {\mathbf 1}_{r}$.\end{enumerate}

In both cases, $z\in P\rq$.
\medskip

Given two elements $w = w_0{\underline{\circ}} (w_1,\dots , w_{r-1}, \vert)$ and $z = z_0{\underline{\circ}} (z_1,\dots , z_{r-1}, \vert)$, consider the elements $w_0\wedge z_0\in {\mathbf {PBT}_{r}}$, and $w_i\wedge z_i\in {\mathbf {PBT}_{n_i}}$, for $1\leq i < r$. 

\noindent We want to prove that $q= (w_0\wedge z_0) {\underline{\circ}} (w_1 \wedge z_1, \dots ,w_{r-1}\wedge z_{r-1}, \vert )$ is $w\wedge z$. 
We know that $q\leq_{T} z$ and $q\leq_{T} w$. Consider $u$ such that $q\leq_{T} u\leq_{T} w\wedge z$. By the second part of Lemma \ref{lem:graftfortrees}, $u = u_0 {\underline{\circ}} (u_1,\dots , u_r)$ with $u_0\in {\mbox{\bf PBT}_r}$ and $u_i\in {\mbox{\bf PBT}_{n_i}}$, for $1\leq i\leq r$. Using the first part of Lemma \ref{lem:graftfortrees} $w_i\wedge z_i\leq_{T} u_i\leq_{T} w_i \wedge z_i$, for all $i=0,1,\dots r$ and $q=u$. So, $P\rq$ is closed under $\wedge$. 

Applying Lemma \ref{lem:fund}, we know that $\mu (q, t) = 0$, for $q\notin P\rq$. 

On the other hand, suppose that $w = w_0 {\underline{\circ}} (w_1,\dots ,w_{r-1}, \vert )\in P\rq$, an easy recursive argument shows that
\begin{equation*} \mu (w, t) = \mu(w_0, t_0) \cdot \dots \cdot \mu (w_{r-1}, t_{r-1}).\end{equation*}

Therefore, 
\begin{align*} M_t =& \sum_{w_i\leq_T t_i} \mu(w_0, t_0)\cdot \dots \cdot \mu(w_{r-1},t_{r-1})\ w_0{\underline{\circ}} (w_1,\dots ,w_{r-1},\vert ) =\\
& M_{t_0}{\underline{\circ}} (M_{t_1},\dots ,M_{t_{r-1}}, \vert),\end{align*}
which ends the proof.

\end{proof}
\medskip 

In the following example we compute some Moebius elements using Proposition \ref{Moebiuisofcomb}  and Proposition \ref{prop:Mtforanyt}.

\begin{example} Using Proposition \ref{Moebiuisofcomb}, we get 
\begin{center}
	\includegraphics[width=0.6\textwidth]{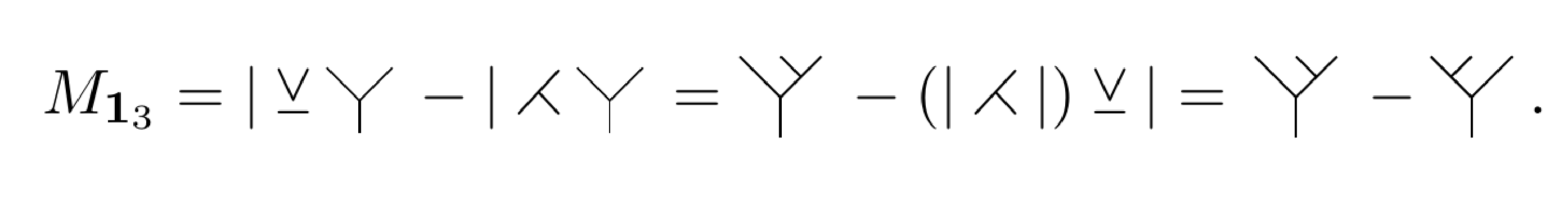}
\end{center}
	
	
Also, applying Proposition \ref{prop:Mtforanyt} to the tree: 
	
\begin{center}
\scalebox{0.4}{	\includegraphics[width=0.8\textwidth]{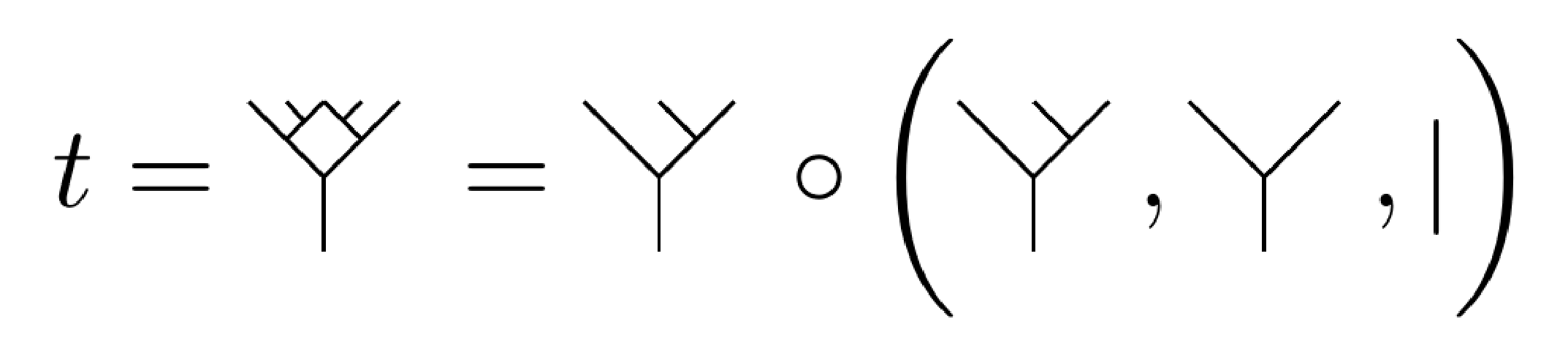}}
\end{center}
	
we have that
\begin{center}
	\scalebox{0.4}{	\includegraphics[width=1\textwidth]{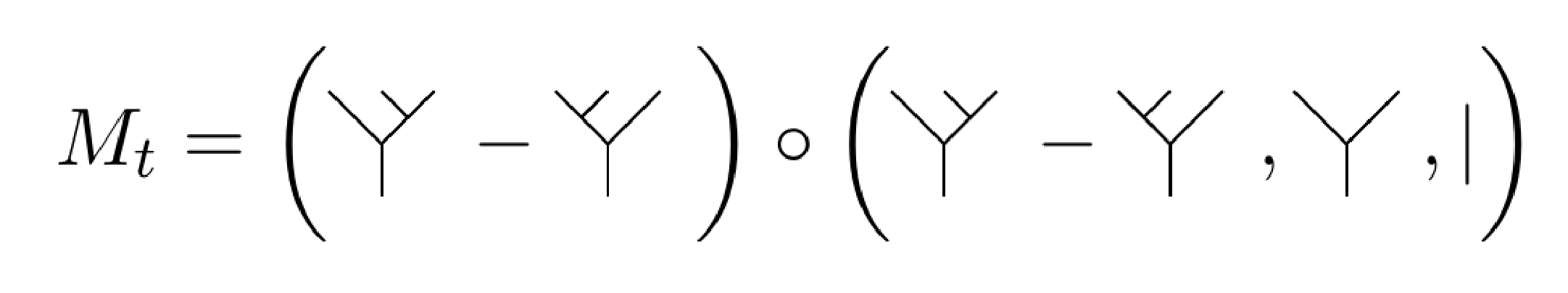}}
\end{center}
\begin{center}
	\scalebox{0.4}{	\includegraphics[width=1\textwidth]{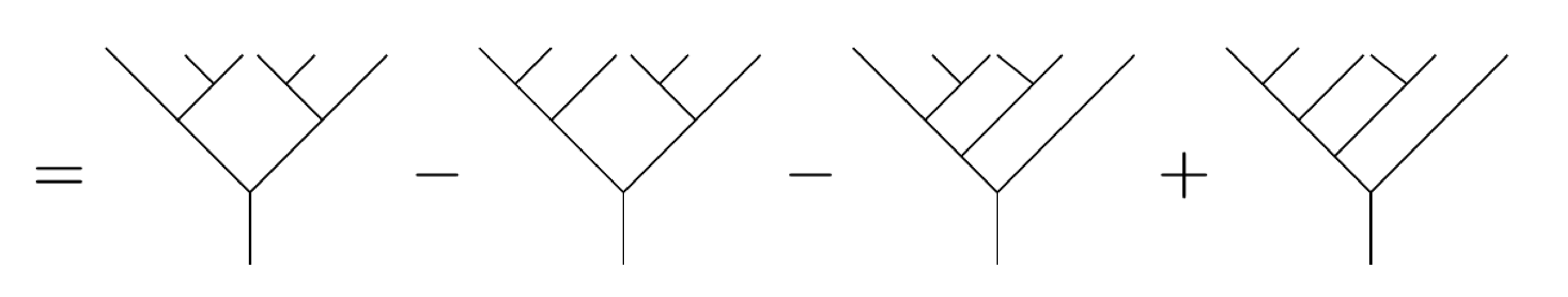}}
\end{center}
\end{example}

To prove the main result of this section, we need some previous definitions and lemmas. 
\medskip

For any $t\in {\mathbf {PBT}_{n}}$, we define $\delta(t) \in \bigoplus_{i=0}^{n-1} \K[{\mathbf {PBT}_{i+1}}]\otimes \K[{\mathbf {PBT}_{n-i}}]$ as 
\begin{equation*} \delta(t) := \sum_{i=0}^{n-1}t_{(1)}^{i+1}\otimes t_{(2)}^i,\end{equation*}
where $\overset{\circledast}{\Delta}(t) =\sum_{i=0}^{n} t_{(1)}^i\otimes t_{(2)}^i$ with $\vert t_{(1)}^i\vert = i$ and $\vert t_{(2)}^i\vert =  n-i$.
\medskip

\begin{lemma} \label{lem:adcoprod}  The coproducts $\overset{\circledast}{\Delta}$ and $\delta$ satisfy that\begin{enumerate}
\item $(\K[{\mathbf {PBT}}], \rightthreetimes , \overset{\circledast}{\Delta})$ is a unital infinitesimal bialgebra, 
\item $\delta $ is a coderivation for the product $\veebar$, that is 
\begin{equation*}\delta \circ \veebar = ({\mbox{id}}\otimes \veebar)\circ (\delta\otimes {\mbox{id}}) + (\veebar\otimes {\mbox{id}})\circ({\mbox{id}}\otimes \delta),\end{equation*}
\item $(\K[{\mathbf {PBT}}], \rightthreetimes , \delta)$ is an infinitesimal bialgebra.
\end{enumerate}
\end{lemma}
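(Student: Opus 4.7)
My plan is to exploit the explicit description of $\overset{\circledast}{\Delta}$ as a sum of cuts at internal nodes (Remark \ref{rem:descriptcoprod}), reducing each part to a direct bookkeeping on the internal nodes of the relevant tree. For part (1), I observe that $t \rightthreetimes w = w \circ_1 (t \veebar \vert)$ has $n + m - 1$ internal nodes when $\vert t\vert = n$ and $\vert w\vert = m$, ordered left-to-right in three blocks: the $n-1$ internal nodes of $t$ (labels $1,\dots,n-1$), the root of the grafted $t \veebar \vert$ (label $n$), and the $m-1$ internal nodes of $w$ (labels $n+1,\dots,n+m-1$). Reading off $\overset{\circledast}{\Delta}(t \rightthreetimes w)$: cut $i$ with $1 \leq i \leq n-1$ contributes $t_{(1)}^i \otimes (t_{(2)}^i \rightthreetimes w)$, cut $n$ contributes $t \otimes w$, cut $n+k$ with $1 \leq k \leq m-1$ contributes $(t \rightthreetimes w_{(1)}^k) \otimes w_{(2)}^k$, plus the two boundary terms $1_{\K} \otimes (t \rightthreetimes w)$ and $(t \rightthreetimes w) \otimes 1_{\K}$. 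Expanding $\sum t_{(1)} \otimes (t_{(2)} \rightthreetimes w) + \sum (t \rightthreetimes w_{(1)}) \otimes w_{(2)}$ and using $1_{\K} \rightthreetimes z = z = z \rightthreetimes 1_{\K}$, one sees that $t \otimes w$ is produced twice, whereas the direct count produces it only once; the $-t \otimes w$ correction in the unital infinitesimal identity accounts for exactly this discrepancy.

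For part (2), the same listing applied to $t \veebar w$ gives
\begin{equation*}
\overset{\circledast}{\Delta}(t \veebar w) = 1_{\K} \otimes (t \veebar w) + \sum_{i=1}^{n-1} t_{(1)}^i \otimes (t_{(2)}^i \veebar w) + t \otimes w + \sum_{k=1}^{m-1}(t \veebar w_{(1)}^k) \otimes w_{(2)}^k + (t \veebar w) \otimes 1_{\K}.
\end{equation*}
Computing $\delta(t\veebar w) = \sum_{i=0}^{n+m-1}(t\veebar w)_{(1)}^{i+1} \otimes (t\veebar w)_{(2)}^i$ and setting $j = i+1$, the terms with $1 \leq j \leq n$ give $t_{(1)}^j \otimes (t_{(2)}^{j-1} \veebar w)$, summing to $({\mbox{id}}\otimes\veebar)(\delta(t)\otimes w)$; the terms with $n+1 \leq j \leq n+m$ give $(t \veebar w_{(1)}^{j-n}) \otimes w_{(2)}^{j-n-1}$, summing to $(\veebar \otimes {\mbox{id}})(t \otimes \delta(w))$. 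The root cut of $t \veebar w$ is absorbed seamlessly into the endpoints (via $t_{(1)}^n = t$ and $w_{(2)}^0 = w$), and the degree shift in $\delta$ removes the double count that required a correction in part (1).

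For part (3), I would repeat the bookkeeping of part (2) on the cut listing of $t \rightthreetimes w$ from part (1). The block $1 \leq j \leq n$ contributes $\sum_{j=1}^n t_{(1)}^j \otimes (t_{(2)}^{j-1} \rightthreetimes w)$. The special index $j = n+1$ picks up $(t \veebar \vert) \otimes w = (t \rightthreetimes \vert) \otimes w$, which is precisely the $k = 1$ entry of $\sum (t \rightthreetimes w_{(1)}^k) \otimes w_{(2)}^{k-1}$ since $w_{(1)}^1 = \vert$ and $w_{(2)}^0 = w$. The remaining indices $j = n+2,\dots,n+m$ contribute $(t \rightthreetimes w_{(1)}^{j-n}) \otimes w_{(2)}^{j-n-1}$. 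Together these recover the (non-unital) infinitesimal identity. Coassociativity of $\delta$, required for the bialgebra structure, follows from its interpretation as a split at a single leaf $j$ (with leaf $j$ kept in both halves): iterated application produces a split at two leaves, manifestly symmetric in the order of the splits.

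The main obstacle is the bookkeeping, and specifically tracking how the boundary and central cuts of $t \rightthreetimes w$ and of $t \veebar w$ are absorbed into the various sums. The key conceptual point is that the degree shift $i \mapsto i+1$ built into $\delta$ is exactly what distinguishes the non-unital infinitesimal relation (no correction) from the unital one (with a $-t \otimes w$ correction); once this is identified, each of the three assertions reduces to a combinatorial tautology on cuts.
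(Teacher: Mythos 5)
Your proposal is correct. The difference from the paper lies mainly in part (1): the paper proves the unital infinitesimal relation for $\rightthreetimes$ by induction on $\vert w\vert$, using the factorization $t\rightthreetimes w = (t\rightthreetimes w^l)\veebar w^r$ together with the recursive definition of $\overset{\circledast}{\Delta}$ on $\veebar$, whereas you enumerate the cuts of $t\rightthreetimes w = w\circ_1(t\veebar\vert)$ directly from Remark \ref{rem:descriptcoprod} and observe that the cut at the new $\rightthreetimes$-node is counted twice on the right-hand side, which is exactly what the $-t\otimes w$ correction removes. Your route is non-inductive and makes the origin of the correction term transparent; the paper's induction has the (minor) advantage of resting only on the recursive definition of $\overset{\circledast}{\Delta}$ rather than on the closed cut formula, although the paper itself invokes that formula in its proof of part (2), so nothing is lost. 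For parts (2) and (3) your index shift $j=i+1$, with the root cut absorbed into the endpoint terms $t_{(1)}^n=t$, $t_{(2)}^{n-1}=\vert=w_{(1)}^1$, $w_{(2)}^0=w$, is essentially the computation the paper performs for (2) and asserts for (3); you additionally sketch the coassociativity of $\delta$, which the paper leaves implicit (it only notes that $\delta$ coincides with the coproduct of the Loday--Ronco algebra after identifying $\vert$ with $1_{\K}$). All the bookkeeping checks out, including the point that leaf $1$ of $w$ always lies in $w_{(1)}^k$ for $k\geq 1$, which is what makes the cut at a node of $w$ equal to $(t\rightthreetimes w_{(1)}^k)\otimes w_{(2)}^k$.
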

\medskip

Note that we cannot say that $(\K[{\mathbf {PBT}}], \veebar , \delta)$ is an infinitesimal bialgebra because $\veebar$ is not associative.

\begin{proof} We prove the first two points, the proof of the third one follows the same arguments than the second one. 

To see that $(\K[{\mathbf {PBT}}], \rightthreetimes , \overset{\circledast}{\Delta})$ is a unital infinitesimal bialgebra we need to prove that 
\begin{equation*} \overset{\circledast}{\Delta}\circ \rightthreetimes = ({\mbox{id}}\otimes  \rightthreetimes)\circ (\overset{\circledast}{\Delta}\otimes {\mbox{id}}) + (\rightthreetimes \otimes {\mbox{id}})\circ ({\mbox{id}}\otimes \overset{\circledast}{\Delta}) - {\mbox{id}}\otimes {\mbox{id}}.\end{equation*}

Let $t$ and $w$ be two planar rooted binary trees, we proceed by induction on the degree of $w$.

If $\vert w\vert =0$, then $w = 1_{\K}$. In this case the result is immediate because $t\rightthreetimes 1_{\K} = t$ for any $t$. 

For $w = \vert$ and $t\in  \K[{\mathbf {PBT}_{n}}]$ , we have that $t\rightthreetimes \vert = t\veebar \vert$. From Definition \ref{def:topbot} , we have that
\begin{align*}  \overset{\circledast}{\Delta}(t\rightthreetimes \vert ) &= \overset{\circledast}{\Delta}(t\veebar \vert ) =\\
&\sum_{i=0}^{n}t_{(1)}^i\otimes (t_{(2)}^i\veebar \vert) + (t\veebar \vert )\otimes 1_{\K} =\\
& \sum_{i=0}^{n}t_{(1)}^i\otimes (t_{(2)}^i\veebar \vert) + (t\rightthreetimes 1_{\K})\otimes \vert +(t \rightthreetimes \vert)\otimes 1_{\K} - t\otimes \vert =\\
&\qquad ({\mbox{id}}\otimes  \rightthreetimes)\circ (\overset{\circledast}{\Delta}(t)\otimes \vert ) + (\rightthreetimes \otimes {\mbox{id}})\circ (t\otimes \overset{\circledast}{\Delta}(\vert)) - t\otimes \vert,\end{align*}
where $ \overset{\circledast}{\Delta}(t) = \sum t_{(1)}\otimes t_{(2)}$, which proves the formula.
\medskip

If $\vert w\vert \geq 2$, then $w = w^l\veebar w^r$ with $\vert w^l\vert < \vert w\vert$. Note that $t \rightthreetimes w = (t\rightthreetimes w^l)\veebar w^r$. So, \begin{enumerate}[(i)]
\item the recursive hypothesis states that 
\begin{equation*} \overset{\circledast}{\Delta} ( t\rightthreetimes w^l) = \sum t_{(1)}\otimes (t_{(2)} \rightthreetimes w^l) +\sum (t \rightthreetimes w_{(1)}^l)\otimes w_{(2)}^l - t\otimes w^l,\end{equation*}
\item by definition $\overset{\circledast}{\Delta}$ satisfy 
\begin{equation*} \overset{\circledast}{\Delta}(w) = \sum w_{(1)}^l\otimes (w_{(2)}^l\veebar w^r) + (w^l\veebar w_{(1)}^r)\otimes w_{(2)}^r - w^l\otimes w^r,\end{equation*}
 \end{enumerate}

So, we get
\begin{align*} \overset{\circledast}{\Delta}(t\rightthreetimes w) &= \overset{\circledast}{\Delta}((t\rightthreetimes w^l)\veebar w^r) =\\
&\sum (t\rightthreetimes w^l)_{(1)}\otimes ( (t\rightthreetimes w^l)_{(2)}\veebar w^r) + \sum ((t\rightthreetimes w^l)\veebar w_{(1)}^r)\otimes w_{(2)}^r - (t\rightthreetimes w^l)\otimes w^r =\\
&\qquad \sum t_{(1)}\otimes ((t_{(2)}\rightthreetimes w^l)\veebar w^r) + \sum (t\rightthreetimes w_{(1)}^l)\otimes (w_{(2)}^l \veebar w^r) -t\otimes (w^l\veebar w^r) +\\
&\qquad \qquad  \sum ((t\rightthreetimes w^l)\veebar w_{(1)}^r)\otimes w_{(2)}^r -  (t\rightthreetimes w^l)\otimes w^r =\\
&\qquad \qquad \qquad \qquad \sum t_{(1)}\otimes (t_{(2)}\rightthreetimes w) + \sum (t\rightthreetimes w_{(1)})\otimes w_{(2)} -  t\otimes w,\end{align*}
which ends the proof.
\bigskip

Let us prove the second point. Suppose that $t\in {\mathbf {PBT}_n}$ and $w\in {\mathbf {PBT}_m}$. 

\noindent We have that $ \overset{\circledast}{\Delta}(t) =\sum_{i=0}^nt_{(1)}^i\otimes t_{(2)}^i$ and $ \overset{\circledast}{\Delta}(w) = \sum_{j=0}^mw_{(1)}^j\otimes w_{(2)}^j$, with $t_{(1)}^0 = w_{(1)}^0 = t_{(2)}^n = w_{(2)}^m = 1_{\K}$ and $t_{(1)}^1 = w_{(1)}^1 = t_{(2)}^{n-1} = w_{(2)}^{m-1}=\vert$.

By definition of $\overset{\circledast}{\Delta}$,
\begin{equation*}  \overset{\circledast}{\Delta}(t\veebar w) = \sum_{i=0}^n t_{(1)}^i\otimes (t_{(2)}^i\veebar w) + \sum_{j=0}^m (t\veebar w_{(1)}^j)\otimes w_{(2)}^j - t\otimes w, \end{equation*}
for $t\in {\mathbf {PBT}_n}$ and $w\in {\mathbf {PBT}_m}$, where $t_{(1)}^1=\vert = w_{(1)}^1$ and $t_{(2)}^{n-1}=\vert = w_{(2)}^{n-1}$.

Therefore,
\begin{align*} \delta (t\veebar w) =& \sum_{i=0}^{n-2} t_{(1)}^{i+1}\otimes (t_{(2)}^i\veebar w) +
 t\otimes (\vert \veebar w) + (t\veebar \vert)\otimes w + \sum_{j=1}^{m-1}(t\veebar w_{(1)}^{j+1})\otimes w_{(2)}^{j}=\\
&[({\mbox{id}}\otimes \veebar)\circ(\delta \otimes {\mbox{id}}) + (\veebar\otimes {\mbox{id}})\circ({\mbox{id}}\otimes \delta)](t\otimes w),\end{align*}
which implies the result.
\end{proof}

In fact, identifying $\vert$ with $1_{\K}$ in $\K[{\mathbf {PBT}}][-1]$, it is not difficult to see that $\delta$ coincides with the coproduct $\Delta$ defined in \cite{LoRo1}.
\medskip

\begin{lemma} \label{lem:zerosdelta} For any $n\geq 2$, we have that $\delta (M_{{\mathbf 1}_{n}}) = \vert \otimes M_{{\mathbf 1}_{n}} + M_{{\mathbf 1}_{n}}\otimes \vert$.\end{lemma}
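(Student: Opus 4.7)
\begin{proo}[Proof plan]
The plan is to proceed by induction on $n$, using the recursive formula $M_{{\mathbf 1}_n} = \vert\veebar M_{{\mathbf 1}_{n-1}}\ -\ \vert\rightthreetimes M_{{\mathbf 1}_{n-1}}$ from Proposition \ref{Moebiuisofcomb} together with the two compatibility properties of $\delta$ established in Lemma \ref{lem:adcoprod}: $\delta$ is a coderivation for $\veebar$, and $(\K[{\mathbf {PBT}}],\rightthreetimes,\delta)$ is an (ordinary) infinitesimal bialgebra.

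First I would verify the base case $n = 2$: since $M_{{\mathbf 1}_2} = \vert\veebar\vert$, a direct computation from Remark \ref{rem:descriptcoprod} shows that $\overset{\circledast}{\Delta}(\vert\veebar\vert) = 1_{\K}\otimes (\vert\veebar\vert) + \vert\otimes\vert + (\vert\veebar\vert)\otimes 1_{\K}$, and extracting only the terms of bidegree $(i+1,n-i)$ with $i\geq 0$ gives $\delta(\vert\veebar\vert) = \vert\otimes (\vert\veebar\vert) + (\vert\veebar\vert)\otimes\vert$, as required. Note also that $\delta(\vert) = \vert\otimes\vert$.

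For the inductive step, assume the identity for $n-1$. Applying the coderivation property for $\veebar$ (with $\delta(\vert) = \vert\otimes\vert$ and the inductive hypothesis $\delta(M_{{\mathbf 1}_{n-1}}) = \vert\otimes M_{{\mathbf 1}_{n-1}} + M_{{\mathbf 1}_{n-1}}\otimes\vert$) yields
\begin{equation*}
\delta(\vert\veebar M_{{\mathbf 1}_{n-1}}) = \vert\otimes(\vert\veebar M_{{\mathbf 1}_{n-1}}) + (\vert\veebar\vert)\otimes M_{{\mathbf 1}_{n-1}} + (\vert\veebar M_{{\mathbf 1}_{n-1}})\otimes\vert.
\end{equation*}
Applying the infinitesimal bialgebra relation for $(\rightthreetimes,\delta)$ and using $\vert\rightthreetimes\vert = \vert\circ_1(\vert\veebar\vert) = \vert\veebar\vert$ yields
\begin{equation*}
\delta(\vert\rightthreetimes M_{{\mathbf 1}_{n-1}}) = \vert\otimes(\vert\rightthreetimes M_{{\mathbf 1}_{n-1}}) + (\vert\veebar\vert)\otimes M_{{\mathbf 1}_{n-1}} + (\vert\rightthreetimes M_{{\mathbf 1}_{n-1}})\otimes\vert.
\end{equation*}
Subtracting, the two middle terms $(\vert\veebar\vert)\otimes M_{{\mathbf 1}_{n-1}}$ cancel, and the remaining terms factor as $\vert\otimes M_{{\mathbf 1}_n} + M_{{\mathbf 1}_n}\otimes\vert$ by the recursive formula for $M_{{\mathbf 1}_n}$, which is what we wanted.

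The only subtle point, and where I would take some care, is the cancellation of the $(\vert\veebar\vert)\otimes M_{{\mathbf 1}_{n-1}}$ terms. This is precisely the reason the recursion $M_{{\mathbf 1}_n} = \vert\veebar M_{{\mathbf 1}_{n-1}} - \vert\rightthreetimes M_{{\mathbf 1}_{n-1}}$ produces a primitive-looking element for $\delta$: the coderivation rule for $\veebar$ and the (non-unital) infinitesimal rule for $\rightthreetimes$ differ exactly by the term $t\otimes w$, and this discrepancy is absorbed by the equality $\vert\rightthreetimes\vert = \vert\veebar\vert$.
\end{proo}
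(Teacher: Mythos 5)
Your proposal is correct and follows essentially the same route as the paper: the base case $n=2$, then induction via $M_{{\mathbf 1}_{n}} = \vert \veebar M_{{\mathbf 1}_{n-1}} - \vert \rightthreetimes M_{{\mathbf 1}_{n-1}}$, applying the coderivation property of $\delta$ for $\veebar$ and the infinitesimal relation for $\rightthreetimes$ from Lemma \ref{lem:adcoprod}, with the middle terms cancelling because $\vert\rightthreetimes\vert = \vert\veebar\vert$. Your explicit remark on why the cancellation works is a nice addition but the argument is the paper's own.
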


\begin{proof} The result is immediate for $n = 2$. For $n > 2$ we apply a recursive argument. 

We know that $M_{{\mathbf 1}_{n}} = \vert \veebar M_{{\mathbf 1}_{n-1}} - \vert \rightthreetimes M_{{\mathbf 1}_{n-1}}$, for $n\geq 2$.

As $\delta (\vert) = \vert \otimes \vert$, applying Lemma \ref{lem:adcoprod}, we get
\begin{align*} \delta (M_{{\mathbf 1}_{n}}) &= \vert \otimes (\vert \veebar M_{{\mathbf 1}_{n-1}}) + (\vert\veebar \vert)\otimes M_{{\mathbf 1}_{n-1}} +\\
&(\vert \veebar M_{{\mathbf 1}_{n-1}})\otimes \vert - \vert\otimes (\vert \rightthreetimes  M_{{\mathbf 1}_{n-1}}) - (\vert \rightthreetimes \vert)\otimes M_{{\mathbf 1}_{n-1}}+
(\vert  \rightthreetimes M_{{\mathbf 1}_{n-1}})\otimes \vert =\\
&\vert \otimes M_{{\mathbf 1}_{n}} + M_{{\mathbf 1}_{n}}\otimes \vert,\end{align*}
which ends the proof.\end{proof}

We apply the previous result to describe the behaviour of $\overset{\circledast}{\Delta}$ with the grafting product ${\underline{\circ}}$.

\begin{proposition} \label{prop:graftdiag} For any pair of trees $t\in {\mathbf {PBT}_{n}}$ and $w$ and any integer $1\leq k\leq n$, we have that
\begin{equation*} \overset{\circledast}{\Delta}\bigl(t\circ_k w\bigr) = \sum_{i=0}^{k-1}t_{(1)}^i \otimes \bigl(t_{(2)}^i\circ_{k-i} w\bigr) + \sum  \bigl(t_{(1)}^k\circ_k{\overline {w}}_{(1)}\bigr)\otimes  \bigl(t_{(2)}^{k-1} \circ _1{\overline w}_{(2)}\bigr) + \sum_{i = k}^{n} \bigl(t_{(1)}^i\circ_k w\bigr)\otimes t_{(2)}^i,\end{equation*}
where ${\overset{\circledast}{\Delta}}\ ^{red}(w) = \overset{\circledast}{\Delta}(w) - 1_{\K}\otimes w - w\otimes 1_{\K} = \sum {\overline w}_{(1)}\otimes {\overline w}_{(2)}$ and $t\circ_k 0 = 0$, for any $1\leq k\leq n$. \end{proposition}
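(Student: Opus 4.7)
The plan is to exploit the combinatorial description of $\overset{\circledast}{\Delta}$ from Remark~\ref{rem:descriptcoprod}: for a tree $T$ with $N$ leaves, $\overset{\circledast}{\Delta}(T)$ is the sum over $0 \leq i \leq N$ of the two pieces obtained by cutting $T$ at the $i$-th internal vertex (in in-order), with the trivial summands $1_\K \otimes T$ and $T \otimes 1_\K$ at $i = 0$ and $i = N$. Setting $n = |t|$ and $m = |w|$, the first step is to list the $n+m-2$ internal vertices of $t \circ_k w$ in in-order: they are the vertices $1, \dots, k-1$ of $t$ (those to the left of leaf $k$), followed by the $m-1$ internal vertices of $w$, followed by the vertices $k, \dots, n-1$ of $t$. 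This is immediate since grafting $w$ at leaf $k$ replaces that leaf by the $m$ leaves of $w$ while leaving the rest of $t$ intact.

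Each of the three sums in the statement then corresponds to one family of cuts. For $0 \leq i < k$ (a cut at vertex $i$ of $t$, or the trivial cut $i=0$), the left piece is $t_{(1)}^i$, since leaves $1,\dots,i$ of $t\circ_k w$ coincide with leaves $1,\dots,i$ of $t$; the right piece is $t_{(2)}^i$ with $w$ regrafted at its $(k-i)$-th leaf (the image of the original grafting leaf), yielding the first sum. Symmetrically, for $k \leq i \leq n$, the right piece is $t_{(2)}^i$ and the left piece is $t_{(1)}^i$ with $w$ grafted at its $k$-th leaf, giving the third sum. For a cut at vertex $j$ of $w$ with $1 \leq j \leq m-1$, the left keeping consists of leaves $1, \dots, k-1$ of $t$ followed by the first $j$ leaves of $w$, and its tree structure is $w_{(1)}^j$ grafted at the $k$-th (rightmost) leaf of $t_{(1)}^k$, i.e.\ $t_{(1)}^k \circ_k w_{(1)}^j$; symmetrically the right piece is $w_{(2)}^j$ grafted at the first leaf of $t_{(2)}^{k-1}$, namely $t_{(2)}^{k-1} \circ_1 w_{(2)}^j$. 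Summing over $j$ and using $\overset{\circledast}{\Delta}\ ^{red}(w) = \sum \overline{w}_{(1)} \otimes \overline{w}_{(2)}$ produces the middle term.

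The main obstacle is the structural identification in the middle case. To make it rigorous I would argue by induction on $n = |t|$. The base case $t = \vert$, $k=1$ is tautological: $\vert \circ_1 w = w$, the first and third sums collapse to $1_\K \otimes w$ and $w \otimes 1_\K$, and the middle sum reproduces $\overset{\circledast}{\Delta}\ ^{red}(w)$, so the formula is just the definition of $\overset{\circledast}{\Delta}(w)$. For the inductive step, write $t = t^l \veebar t^r$ and split according to whether $k \leq |t^l|$ or $k > |t^l|$; then $t \circ_k w$ equals $(t^l \circ_k w) \veebar t^r$ or $t^l \veebar (t^r \circ_{k-|t^l|} w)$ respectively. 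Applying the defining recursion of $\overset{\circledast}{\Delta}$ on $\veebar$ from Definition~\ref{def:topbot}(2)(c), then the inductive hypothesis to the inner grafting, and finally regrouping the resulting terms according to whether the cut falls in $t^l$, in $w$, or in $t^r$, one recovers exactly the three sums in the statement.
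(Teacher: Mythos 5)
Your proposal is correct and its rigorous backbone — induction on the number of leaves of $t$, writing $t = t^l\veebar t^r$, splitting on whether $k\leq \vert t^l\vert$, and invoking the unital infinitesimal compatibility of $\veebar$ with $\overset{\circledast}{\Delta}$ together with the inductive hypothesis — is exactly the argument the paper gives. The preliminary combinatorial reading of the three sums via the vertex-cutting description of Remark \ref{rem:descriptcoprod} is a sound sanity check but plays the same role as the paper's formal induction.
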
               

\begin{proof} We proceed by induction on the number of leaves of $t$. For $t = \vert$, the result is immediate using that $t\circ_1 w = w$ and $ \overset{\circledast}{\Delta}(\vert) = 1_{\K}\otimes \vert + \vert\otimes 1_{\K}$.

For $n\geq 2$ we have that $t = t^l\veebar t^r$, with $\vert t^l\vert < n$ and $\vert t^r\vert < n$. If $1\leq k\leq \vert t^l\vert$, then 

\noindent $t\circ_k w = (t^l\circ_k w)\veebar t^r$. Using that $\veebar$ and the coproduct  $\overset{\circledast}{\Delta}$ satisfy the relation of unital infinitesimal bialgebras (even if $\veebar$ is not associative) and the inductive hypotesis  we get that,
\begin{align*} \overset{\circledast}{\Delta}\bigl(t\circ_k w\bigr) =& \sum_{i=1}^{k-1} t_{(1)}^{l,i}\otimes \bigl((t_{(2)}^{l, i}\circ_{k-i} w)\veebar t^r\bigr) + \sum \bigl(t_{(1)}^{l,k} \circ_k {\overline w}_{(1)}\bigr)\otimes \bigl((t_{(2)}^{l,k-1} \circ_1 {\overline w}_{(2)}) \veebar t^r\bigr) +\\
&\sum_{i=k}^{\vert t^l\vert}\bigl(t_{(1)}^{l,i}\circ_kw\bigr)\otimes \bigl(t_{(2)}^{l,i}\veebar t^r\bigr) + \sum_{i =1}^{\vert t^r\vert}\bigl((t^l\circ_k w\bigr)\veebar t_{(1)}^{r,i})\otimes t_{(2)}^{r,i} =\\
& \sum_{i=0}^{k-1}t_{(1)}^i \otimes \bigl(t_{(2)}^i\circ_{k-i} w\bigr) + \sum \bigl(t_{(1)}^k\circ _k {\overline w}_{(1)}\bigr)\otimes \bigl(t_{(2)}^{k-1}\circ_1 {\overline w}_{(2)}\bigr) +\sum_{i=k}^n \bigl(t_{(1)}^i\circ_k w\bigr) \otimes t_{(2)}^i,\end{align*}
which proves the formula. 
 
 The proof for $\vert t^l\vert < k$ is obtained in a similar way.\end{proof}
 \medskip
 
 \begin{corollary} \label{cor:delteofgraft} Let $t\in {\mathbf {PBT}_{n}}$ and $w_1,\dots ,w_n$ be a family of planar rooted trees. We have that
 \begin{align*}   \overset{\circledast}{\Delta}(t{\underline{\circ}} (w_1,\dots ,w_n)) =& \sum_{i= 0}^n (t_{(1)}^i{\underline{\circ}} (w_1,\dots ,w_i))\otimes (t_{(2)}^i{\underline{\circ}} (w_{i+1},\dots ,w_n) )+\\
& \sum _{j=1}^n ({\tilde {t}}_{(1)}^j {\underline{\circ}} (w_1,\dots  ,{\overline w}_{j(1)}))\otimes  ({\tilde {t}}_{(2)}^j {\underline{\circ}} ({\overline w}_{j(2)},\dots  ,w_n)),\end{align*}
 where $\displaystyle  \overset{\circledast}{\Delta}(t) = \sum_{i=0}^nt_{(1)}^i \otimes t_{(2)}^i$, $\displaystyle  \overset{\circledast}{\Delta}\ ^{red}(w) =\sum {\overline w}_{(1)}\otimes {\overline w}_{(2)}$, and
 $\displaystyle \delta(t) = \sum _{j=0}^{n-1}t_{(1)}^{j+1}\otimes t_{(2)}^j = \sum_{j=1}^n {\tilde {t}}_{(1)}^j\otimes {\tilde {t}}_{(2)}^j$, with $\vert  {\tilde {t}}_{(1)}^j\vert + \vert  {\tilde {t}}_{(2)}^j\vert = n+1$ for $1\leq j\leq n$.\end{corollary}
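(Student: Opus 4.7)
\begin{proo}
The plan is to prove the formula by induction on $n=\vert t\vert $, exploiting the recursive decomposition $t=t^l\veebar t^r$ together with the unital infinitesimal relation for $(\K[{\mathbf{PBT}}], \veebar , \overset{\circledast}{\Delta})$ and the coderivation property of $\delta$ with respect to $\veebar$ proved in Lemma \ref{lem:adcoprod}.

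\emph{Base case.} For $n=1$ we have $t=\vert$ and $t\,{\underline{\circ}}(w_1)=w_1$. A direct evaluation of the two sums on the right-hand side, using $\overset{\circledast}{\Delta}(\vert)=1_\K\otimes \vert+\vert\otimes 1_\K$ and $\delta(\vert)=\vert\otimes\vert$, yields
\begin{equation*}
1_\K\otimes w_1\ +\ w_1\otimes 1_\K\ +\ \sum {\overline w}_{1(1)}\otimes {\overline w}_{1(2)}\ =\ \overset{\circledast}{\Delta}(w_1),
\end{equation*}
as desired.

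\emph{Inductive step.} For $n\geq 2$ write $t=t^l\veebar t^r$ with $k=\vert t^l\vert$ and $n-k=\vert t^r\vert$. The definition of the grafting operation yields
\begin{equation*}
t\,{\underline{\circ}}(w_1,\dots ,w_n)\ =\ a\veebar b,\qquad a:=t^l\,{\underline{\circ}}(w_1,\dots ,w_k),\quad b:=t^r\,{\underline{\circ}}(w_{k+1},\dots ,w_n).
\end{equation*}
I would then apply the defining formula of $\overset{\circledast}{\Delta}$ for $\veebar$ from Definition \ref{def:topbot}(2)(c),
\begin{equation*}
\overset{\circledast}{\Delta}(a\veebar b)\ =\ \sum a_{(1)}\otimes (a_{(2)}\veebar b)\ +\ \sum (a\veebar b_{(1)})\otimes b_{(2)}\ -\ a\otimes b,
\end{equation*}
and substitute the inductive hypothesis for $\overset{\circledast}{\Delta}(a)$ and $\overset{\circledast}{\Delta}(b)$. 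The resulting expression splits into four blocks of terms, coming from the two summands of each inductive formula.

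\emph{Matching the pure terms.} The contributions from the $\overset{\circledast}{\Delta}(t^l)$- and $\overset{\circledast}{\Delta}(t^r)$-parts reassemble, via the identity $(s\veebar s')\,{\underline{\circ}}(u_1,\dots ,u_{p+q})=(s\,{\underline{\circ}}(u_1,\dots ,u_p))\veebar (s'\,{\underline{\circ}}(u_{p+1},\dots ,u_{p+q}))$ for $\vert s\vert =p$ and $\vert s'\vert =q$, into the first sum in the target formula. The necessary identifications of $t_{(1)}^i,t_{(2)}^i$ in terms of $(t^l)_{(1)}^i,(t^l)_{(2)}^i,(t^r)_{(1)}^{i-k},(t^r)_{(2)}^{i-k}$ come from applying the $\veebar$-formula for $\overset{\circledast}{\Delta}$ to $t=t^l\veebar t^r$; in particular the double counting at the index $i=k$ (the root of $t$) is canceled exactly by the correction term $-a\otimes b$.

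\emph{Matching the mixed terms.} The contributions from the $\delta(t^l)$- and $\delta(t^r)$-parts combine, using Lemma \ref{lem:adcoprod}(2),
\begin{equation*}
\delta (t^l\veebar t^r)\ =\ ({\mbox{id}}\otimes \veebar)\circ (\delta (t^l)\otimes {\mbox{id}})\ +\ (\veebar\otimes {\mbox{id}})\circ ({\mbox{id}}\otimes \delta (t^r)),
\end{equation*}
into the second sum in the target formula indexed by the $n$ terms of $\delta(t)$.

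\emph{Main obstacle.} The essential difficulty is purely combinatorial bookkeeping: keeping track of the correct index shifts when separating the cuts of $t$ into cuts of $t^l$, the root of $t$, and cuts of $t^r$, and carefully handling the boundary cases where $1_\K$ acts as a unit for $\veebar$ and where empty tuples appear in ${\underline{\circ}}$. Once the conventions are fixed, the equality reduces term by term.
\end{proo}
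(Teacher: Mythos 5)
Your proof is correct, but it takes a different route from the paper. The paper states this as a corollary of Proposition \ref{prop:graftdiag}: one writes $t{\underline{\circ}}(w_1,\dots ,w_n) = (((t\circ_n w_n)\circ_{n-1}w_{n-1})\dots )\circ_1 w_1$ and iterates the single-grafting formula, collecting the pure terms into the first sum and the one mixed term produced at each leaf into the second sum. You instead bypass Proposition \ref{prop:graftdiag} entirely and run a direct structural induction on $t = t^l\veebar t^r$, using the identity $t{\underline{\circ}}(w_1,\dots ,w_n) = (t^l{\underline{\circ}}(w_1,\dots ,w_k))\veebar (t^r{\underline{\circ}}(w_{k+1},\dots ,w_n))$, the defining relation of $\overset{\circledast}{\Delta}$ on $\veebar$, and the coderivation property of $\delta$ from Lemma \ref{lem:adcoprod}(2) --- which is exactly the inductive machinery the paper deploys to prove the Proposition itself, now applied at the level of the full multi-grafting. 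Your key observations check out: the components of $\overset{\circledast}{\Delta}(t)$ and $\delta(t)$ decompose correctly over $t^l$ and $t^r$, and the double appearance of $a\otimes b$ (once from $a_{(1)}\otimes (a_{(2)}\veebar b)$ at $a_{(2)}=1_\K$ and once from $(a\veebar b_{(1)})\otimes b_{(2)}$ at $b_{(1)}=1_\K$) is reduced to the single $i=k$ term of the target sum precisely by the correction $-a\otimes b$. What each approach buys: the paper's derivation reuses an already-proven statement and so costs nothing extra, while yours is self-contained, avoids the index bookkeeping of iterated $\circ_k$'s, and in effect subsumes Proposition \ref{prop:graftdiag} as the special case of a single nontrivial $w_j$ --- at the price of duplicating the induction that the paper performs only once.
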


\begin{theorem} \label{th:formAS} For any tree $t\in {\mathbf {PBT}_{n}}$, we have that
\begin{equation*} 
\overset{\circledast}{\Delta}\bigl(M_t\bigr) = \sum_{z_1\rightthreetimes z_2 = t} M_{z_1}\otimes M_{z_2},
\end{equation*}
where $1_{\K}\rightthreetimes t = t = t \rightthreetimes 1_{\K}$ for any planar rooted tree $t$.
\end{theorem}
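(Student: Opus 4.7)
My plan is induction on $n=\vert t\vert$. The base $n=1$ holds since $M_{\vert}=\vert$ and $\vert$ has only the trivial $\rightthreetimes$-decompositions. Throughout, I will use the following description of decompositions: writing $z_1\rightthreetimes z_2 = z_2\circ_1(z_1\veebar \vert)$, a nontrivial pair with $z_1\rightthreetimes z_2 = t$ is uniquely determined by an internal vertex $v$ on the leftmost spine of $t$ whose right child is a leaf, where $z_1$ is the left subtree at $v$ and $z_2$ is the tree obtained from $t$ by replacing the subtree at $v$ by a single leaf. In particular $\mathbf{1}_n$ with $n\geq 3$ is $\rightthreetimes$-irreducible, since its root's right child is $\mathbf{1}_{n-1}\neq \vert$ and the leftmost spine has no other internal vertex.

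For the subcase $t=\mathbf{1}_n$ with $n\geq 3$, I use the recursion $M_{\mathbf{1}_n}=\vert\veebar M_{\mathbf{1}_{n-1}} - \vert\rightthreetimes M_{\mathbf{1}_{n-1}}$ of Proposition \ref{Moebiuisofcomb} and compute $\overset{\circledast}{\Delta}$ on each summand, using the coproduct's defining recursion for the $\veebar$-term and the unital infinitesimal bialgebra relation of Lemma \ref{lem:adcoprod} for the $\rightthreetimes$-term. Applying the inductive hypothesis to $M_{\mathbf{1}_{n-1}}$ and using the identities $1_{\K}\veebar w = w = 1_{\K}\rightthreetimes w$ together with $\vert\veebar \vert = \vert\rightthreetimes\vert = \mathbf{1}_2$, the middle terms of the two expansions cancel pairwise, leaving $\overset{\circledast}{\Delta}(M_{\mathbf{1}_n}) = 1_{\K}\otimes M_{\mathbf{1}_n}+M_{\mathbf{1}_n}\otimes 1_{\K}$, which matches the right-hand side by the above irreducibility. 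The base $n=2$ is handled directly: the nontrivial decomposition $(\vert,\vert)$ of $\mathbf{1}_2$ accounts for the extra $\vert\otimes \vert$ appearing in $\overset{\circledast}{\Delta}(\vert\veebar \vert)$.

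For the subcase $t\neq \mathbf{1}_n$, write $t = \mathbf{1}_r\underline{\circ}(t_1,\dots,t_{r-1},\vert)$ with $r<n$. Proposition \ref{prop:Mtforanyt} gives $M_t = M_{\mathbf{1}_r}\underline{\circ}(M_{t_1},\dots,M_{t_{r-1}},\vert)$, and applying Corollary \ref{cor:delteofgraft} expands $\overset{\circledast}{\Delta}(M_t)$ into two sums, one parameterized by $\overset{\circledast}{\Delta}(M_{\mathbf{1}_r})$ and one by $\delta(M_{\mathbf{1}_r})$. By the inductive hypothesis for $\mathbf{1}_r$, the first sum contributes only $1_{\K}\otimes M_t + M_t\otimes 1_{\K}$, plus a single additional term $M_{t_1}\otimes \vert$ when $r=2$, coming from the $\vert\otimes \vert$ piece of $\overset{\circledast}{\Delta}(\vert\veebar \vert)$. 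Lemma \ref{lem:zerosdelta} gives $\delta(M_{\mathbf{1}_r}) = \vert\otimes M_{\mathbf{1}_r} + M_{\mathbf{1}_r}\otimes \vert$; the $j=r$ contribution vanishes because the reduced coproduct of $\vert$ is zero, while the $j=1$ contribution, after applying the inductive hypothesis to $M_{t_1}$ and a second use of Proposition \ref{prop:Mtforanyt}, takes the form $\sum M_{u_1}\otimes M_{\mathbf{1}_r\underline{\circ}(u_2,t_2,\dots,t_{r-1},\vert)}$, summed over nontrivial decompositions $u_1\rightthreetimes u_2 = t_1$.

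To conclude, these contributions must match $\sum_{z_1\rightthreetimes z_2=t}M_{z_1}\otimes M_{z_2}$ via the leftmost-spine classification: the nontrivial decompositions of $t$ form exactly two families, namely the cut at the root (available only when $r=2$, producing $(t_1,\vert)$) and the cuts along the leftmost spine of $t_1$ (each producing $(u_1,\mathbf{1}_r\underline{\circ}(u_2,t_2,\dots,t_{r-1},\vert))$ from a nontrivial decomposition $(u_1,u_2)$ of $t_1$). I expect the main technical obstacle to be precisely this combinatorial verification: one must rule out cuts occurring inside the grafts $t_2,\dots,t_{r-1}$, which do not lie on the leftmost spine of $t$, and confirm that the correspondence between $\delta$-contributions and spine cuts in $t_1$ is a genuine bijection. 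Once this is in hand, the two sides agree term by term.
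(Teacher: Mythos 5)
Your proposal is correct and follows essentially the same route as the paper's proof: induction on the number of leaves, using Proposition \ref{Moebiuisofcomb} together with Lemma \ref{lem:adcoprod} for the combs ${\mathbf 1}_{n}$, and then Proposition \ref{prop:Mtforanyt}, Lemma \ref{lem:zerosdelta} and Corollary \ref{cor:delteofgraft} for a general $t = {\mathbf 1}_{r}{\underline{\circ}}(t_1,\dots ,t_{r-1},\vert)$. Your explicit leftmost-spine classification of the $\rightthreetimes$-decompositions, and your separate accounting of the boundary term $M_{t_1}\otimes \vert$ when $r=2$, make the final term-by-term matching more transparent than the paper's write-up, but the underlying argument is the same.
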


\begin{proof} We prove first the result for the elements $t ={\mathbf 1}_{n}$, $n\geq 1$. Note that if ${\mathbf 1}_{n} = t_1\rightthreetimes t_2$, then either $t_1=1_{\K}$ and $t_2= 
{\mathbf 1}_{n}$, or $t_1= {\mathbf 1}_{n}$ and $t_2 = 1_{\K}$. 
\medskip
	
For $n=1$ or $n=2$ the result is immediate.
\medskip 
	
For $n \geq 3$, the recursive argument asserts that $M_{{\mathbf 1}_{n-1}}$ is primitive. Using the description of the coproduct $\overset{\circledast}{\Delta}$ given in Definition \ref{def:topbot}, and applying Proposition \ref{Moebiuisofcomb} and Lemma \ref{lem:adcoprod}, we get that 
\begin{align*}  \overset{\circledast}{\Delta}\bigl( M_{{\mathbf 1}_{n}}\bigr)  =&   \overset{\circledast}{\Delta} (\vert \veebar M_{{\mathbf 1}_{n-1}} - \vert \rightthreetimes M_{{\mathbf 1}_{n-1}} ) =\\
1_{\K} \otimes M_{{\mathbf 1}_{n}} &+ \vert \otimes M_{{\mathbf 1}_{n-1}} - \vert \otimes M_{{\mathbf 1}_{n-1}} + M_{{\mathbf 1}_{n}}\otimes 1_{\K} =\\
&1_{\K} \otimes M_{{\mathbf 1}_{n}} + M_{{\mathbf 1}_{n}}\otimes 1_{\K}. \end{align*}
Therefore the result holds for the trees of type ${\mathbf 1}_{n}$.
\bigskip

Suppose that $t = {\mathbf 1}_{r}{\underline{\circ}} (w_1,\dots , w_{r-1}, \vert )$ for some collection of trees $w_1,\dots , w_{r-1}$ such that $\vert w_i\vert >1$ for at least one integer $i$, $1\leq i\leq r-1$. 

Proposition \ref{prop:Mtforanyt} states that $M_t = M_{{\mathbf 1}_{r}}{\underline{\circ}}(M_{w_1},\dots ,M_{w_{r-1}},\vert)$.

From the previous paragraph, we know that  $\overset{\circledast}{\Delta}\bigl(M_{{\mathbf 1}_{r}}\bigr) = 1_{\K}\otimes M_{{\mathbf 1}_{r}} + M_{{\mathbf 1}_{r}}\otimes 1_{\K}$, and Lemma \ref{lem:zerosdelta} states that $\delta (M_{{\mathbf 1}_{r}}) = \vert \otimes M_{{\mathbf 1}_{r}} + M_{{\mathbf 1}_{r}}\otimes \vert $. So, applying Corollary \ref{cor:delteofgraft} and using that $\overset{\circledast}{\Delta}\ ^{red}(\vert) = 0$, we get
\begin{equation*} \overset{\circledast}{\Delta}\bigl(M_t\bigr) = 1_{\K}\otimes M_t + M_t\otimes 1_{\K} + \sum (\vert {\underline{\circ}} {\overline {M_{w_1}}}_{(1)} )\otimes (M_{{\mathbf 1}_{T,r}}{\underline{\circ}} ({\overline {M_{w_1}}}_{(2)}, M_{w_2},\dots ,M_{w_{r-1}},\vert )),\end{equation*}
where
\begin{equation*}\overset{\circledast}{\Delta} (M_{w_1}) = 1_{\K}\otimes M_{w_1} + \sum {\overline{M_{w_1}}}_{(1)} \otimes  {\overline{M_{w_1}}}_{(2)}  + M_{w_1}\otimes 1_{\K}.\end{equation*}
\medskip

If $t = \vert$, there exist two ways to write down $\vert = z_1\rightthreetimes z_2$, either $z_1=1_{\K}$ and $z_2 = \vert$, or $z_1 =\vert$ and $z_2 = 1_{\K}$. As $M_{\vert} = \vert$ and 
$\overset{\circledast}{\Delta}(\vert) = 1_{\K}\otimes \vert + \vert \otimes 1_{\K}$, the result is clearly true.
\medskip

Let $t = {\mathbf 1}_{r}{\underline{\circ}} (w_1,\dots , w_{r-1}, \vert )$ be such that $\vert t\vert =n > 1$. We have that $t = w_1\veebar t^r$, with $t^r = {\mathbf 1}_{r-1}{\underline{\circ}} (w_2,\dots , w_{r-1}, \vert )$. 

Moreover, $t =  z_1\rightthreetimes z_2$ if, and only if, $z_1 = u_1$ and $z_2 = u_2\veebar t^r$, for a pair of elements $z_1$ and $z_2$ satisfying that 
$w_1 = u_1\rightthreetimes u_2$. As $\vert w_1\vert < n$, we may assume that 

\begin{equation*} \overset{\circledast}{\Delta}\bigl(M_{w_1}\bigr) = \sum_{u_1\rightthreetimes u_2 = w_1} M_{u_1}\otimes M_{u_2},\end{equation*}

Using equality $(3)$, we get
\begin{equation*}  \overset{\circledast}{\Delta}(M_t) = \sum _{u_1\rightthreetimes u_2 =w_1}M_{u_1} \otimes (M_{u_2}\veebar \vert) = \sum_{z_1\rightthreetimes z_2 = t}M_{z_1}\otimes M_{z_2},\end{equation*}
which ends the proof.
\end{proof} 
\medskip

\begin{definition} \label{defn:irred} A {\it $\rightthreetimes$-irreducible tree} is a tree $t\in {\mathbf {PBT}_{n}}$ satisfying that there does not exist a pair of planar rooted binary trees $u$ and $w$ such that $t = u\rightthreetimes w$.\end{definition}

\begin{remark} \label{rem:irrele} The tree $\vert \in {\mathbf {PBT}_{1}}$ is irreducible and there is no $\rightthreetimes$-irreducible tree in ${\mathbf {PBT}_{2}}$ because the tree $\scalebox{0.05}{	\includegraphics[width=0.5\textwidth]{dib4.eps}
}$ is $\vert \rightthreetimes\vert$.

For $n > 2$, it is easily seen that a tree $t\in {\mathbf {PBT}_{n}}$ is $\rightthreetimes$-irreducible if, and only if, $t = t^l\veebar t^r$ where $t^l$ is $\rightthreetimes$-irreducible and $\vert t^r\vert > 1$.

Moreover, a standard argument shows that any tree $t\in {\mathbf {PBT}_{n}}$ may be written in a unique way as $t = t_1\rightthreetimes t_2\rightthreetimes\dots \rightthreetimes t_r$ for a unique family of $\rightthreetimes$-irreducible trees $t_1,\dots t_r$ and a unique positive integer $r$. \end{remark}

We denote by ${\mbox {$\rightthreetimes$-Irr}_n}$ the set of all $\rightthreetimes$-irreducible trees in ${\mathbf {PBT}_{n}}$.

\begin{lemma} \label{lem:Moebirred} Let $t$ be a planar binary rooted tree. If $t=  t_1\rightthreetimes t_2\rightthreetimes\dots \rightthreetimes t_r$ with $t_i\in {\mbox {$\rightthreetimes$-Irr}_{n_i}}$ for $1\leq i\leq r$, then $M_t = M_{t_1}\rightthreetimes M_{t_2}\rightthreetimes\dots \rightthreetimes M_{t_r}
\bigskip$.\end{lemma}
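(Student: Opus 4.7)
My plan is to prove the stronger identity $M_{a\rightthreetimes b} = M_a\rightthreetimes M_b$ for any two planar binary rooted trees $a,b$; the lemma then follows by induction on $r$ together with the associativity of $\rightthreetimes$. The strategy is poset-theoretic: identify the Tamari interval below $a\rightthreetimes b$ as a product of smaller intervals, and then invoke the multiplicativity of the M\"obius function, so that the statement becomes a purely formal computation.

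The first step is the identification $[\mathbf{0}_{\vert a\vert+\vert b\vert},\, a\rightthreetimes b] \cong [\mathbf{0}_{\vert a\vert},a]\times[\mathbf{0}_{\vert b\vert},b]$. Writing
\begin{equation*}
a \rightthreetimes b \;=\; b\,\underline{\circ}\,(a\veebar\vert,\vert,\dots,\vert),
\end{equation*}
with trailing $\vert$'s at leaves $2,\dots,\vert b\vert$ of $b$, and observing that $\mathbf{0}_{\vert a\vert+\vert b\vert}$ admits a $\underline{\circ}$-decomposition of the same shape, Lemma~\ref{lem:graftfortrees}(2) forces every $w\leq_T a\rightthreetimes b$ to have the form $w=w_0\,\underline{\circ}\,(w',\vert,\dots,\vert)$ with $w_0\leq_T b$ and $w'\leq_T a\veebar\vert$. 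A further application of the same lemma to $a\veebar\vert=(\vert\veebar\vert)\,\underline{\circ}\,(a,\vert)$ forces $w'=w_1\veebar\vert$ with $w_1\leq_T a$. Thus $w = w_1\rightthreetimes w_0$ uniquely (the degrees of $w_1,w_0$ are forced by $\vert a\vert,\vert b\vert$), producing a bijection $\phi:(w_1,w_0)\mapsto w_1\rightthreetimes w_0$. I would then check $\phi$ is a poset isomorphism: the forward direction follows from the monotonicity of $\veebar$ (Definition~\ref{def:Tamari}) and of grafting $\circ_1$ in each argument, while the converse is exactly Lemma~\ref{lem:graftfortrees}(1) applied to the common $\underline{\circ}$-shape of $w$ and $w'$.

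Since $\phi$ identifies every subinterval $[w_1\rightthreetimes w_0,\, a\rightthreetimes b]$ with the product $[w_1,a]\times[w_0,b]$, the product formula for M\"obius functions on a product of finite posets gives
\begin{equation*}
\mu(w_1\rightthreetimes w_0\,;\, a\rightthreetimes b) \;=\; \mu(w_1;\,a)\,\mu(w_0;\,b).
\end{equation*}
Substituting into the definition of $M_{a\rightthreetimes b}$ and using bilinearity of $\rightthreetimes$,
\begin{equation*}
M_{a\rightthreetimes b} \;=\; \sum_{w\leq_T a\rightthreetimes b}\mu(w;a\rightthreetimes b)\, w \;=\; \sum_{\substack{w_1\leq_T a\\ w_0\leq_T b}}\mu(w_1;a)\mu(w_0;b)(w_1\rightthreetimes w_0) \;=\; M_a\rightthreetimes M_b.
\end{equation*}
Iterating this identity $r-1$ times yields $M_t = M_{t_1}\rightthreetimes\cdots\rightthreetimes M_{t_r}$.

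The main technical obstacle I anticipate is the clean verification of the poset isomorphism $\phi$, in particular the monotonicity of $\rightthreetimes$ in both arguments, which is not stated as a separate lemma in the text and must be deduced from the monotonicity of $\veebar$ and of $\circ_1$. An alternative route through Theorem~\ref{th:formAS} together with Lemma~\ref{lem:adcoprod} would show inductively that $M_t - M_{t_1}\rightthreetimes\cdots\rightthreetimes M_{t_r}$ is primitive, but concluding its vanishing from primitivity alone requires additional rigidity input (primitives supported strictly below $t$ need not be zero a priori), so the M\"obius-multiplication route above is preferable.
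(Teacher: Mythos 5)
Your proposal is correct and follows essentially the same route as the paper: both reduce to the case of two factors, show that every $w\leq_T t_1\rightthreetimes t_2$ factors uniquely as $w_1\rightthreetimes w_2$ with $w_i\leq_T t_i$, and conclude via multiplicativity of the Moebius function on the resulting product of intervals. The only difference is how the factorization of the lower set is obtained --- the paper analyzes the covering moves (rotations) directly and observes that the subtree $t_1\veebar \vert$ grafted at the leftmost leaf cannot be broken, whereas you invoke Lemma~\ref{lem:graftfortrees} on the presentation $t_1\rightthreetimes t_2 = t_2\circ_1(t_1\veebar \vert)$; both work, and yours is if anything more explicit about the poset isomorphism and the Moebius step, which the paper leaves implicit.
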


\begin{proof} The result is evident for $r=1$ and for $t = {\mathbf 0}_n =\vert \rightthreetimes\dots \rightthreetimes \vert$, $n\geq 2$.
\medskip

Proceeding by recursion on $r$, it suffices to show that for any pair of trees $t_1\in {\mathbf {PBT}}_{n_1}$ and $t_2\in {\mathbf {PBT}}_{n_2}$, the element $M_{t_1\rightthreetimes t_2}$ is equal to $M_{t_1}\rightthreetimes M_{t_2}$.

For any internal node $v$ of a tree $z$ let $z_v$ be the subtree of $z$ whose root is $v$. Note that $w\lessdot z$ implies that there exists an internal node $v$ of $z$ where 
$z_v = z\rq\veebar (u_1\veebar u_2)$, such that $w$ is obtained from $z$ by replacing the subtree $z_v$ by the tree $(z\rq\veebar u_1)\veebar u_2$.

If a tree $z$ is of the form $z = z_1\rightthreetimes z_2 = z_2\circ_1(z_1\veebar \vert)$, then applying the argument above we get that any tree $w$ satisfying $w\lessdot z$ is of the form $w_1 \rightthreetimes w_2$ with $\vert w_i\vert = \vert z_i\vert$, for $i = 1, 2$, because $w$ cannot be obtained by modifying the subtree $z_1\veebar \vert$ of $z$. 

So, any element $w\leq _T t_1\rightthreetimes t_2$ is of the form $w = w_1\rightthreetimes w_2$, with $\vert w_i\vert = \vert z_i\vert$ and $w_i\leq _T z_i$, for $i = 1,2$, which ends the proof.
\end{proof}

\bigskip

\section{Magmatic Infinitesimal bialgebras}

Our aim is to compute the subspace of primitive element of a coassociative coalgebra, equipped with many magmatic products which satisfy the infinitesimal unital relation. Theorem \ref{th:formAS} gives a natural basis for the subspace of primitive elements of any free magmatic algebra.

 We recall the definition of unital infinitesimal magmatic bialgebra and the structure theorem for conilpotent unital infinitesimal magmatic bialgebras proved in \cite{HoLoRo}. 

\begin{definition}\label{def:unitinfibial} A {\it unital magmatic infinitesimal bialgebra} over a field $\K$ is a vector space $H$ equipped with a coassociative coproduct $\Delta: H\longrightarrow H\otimes H$ and a binary product $\cdot : H\otimes H\longrightarrow H$ satisfying that 
\begin{equation*} \Delta (x\cdot y) = \sum x_{(1)}\otimes (x_{(2)}\cdot y) + \sum (x\cdot y_{(1)})\otimes y_{(2)} - x\otimes y,\end{equation*}
for $x, y\in H$, where $\Delta (x) = \sum x_{(1)}\otimes x_{(2)}$.

Given a set $S$, an {\it $S$-magmatic algebra} is a vector space $A$ equipped with a family of binary products $\{\cdot_s\}_{s\in S}$.
A {\it $S$-unital magmatic infinitesimal bialgebra} is an $S$-magmatic algebra $(H, \{\cdot_s\}_{s\in S})$  equipped with a coassociative coproduct $\Delta$ satisfying that $(H, \cdot_s, \Delta)$ is a unital magmatic infinitesimal bialgebra, for $s\in S$.\end{definition}

For instance, the coalgebra $(\K[{\mathbf{PBT}_n}],  \overset{\circledast}{\Delta})$ equipped with the product $\veebar$ is an example of unital magmatic infinitesimal bialgebra, which is not a unital infinitesimal bialgebra because $\veebar$ is not associative.

 The following result was proved in \cite{HoLoRo}. 
 \begin{theorem} \label{teo:estmagmatic}
Any conilpotent magmatic infinitesimal bialgebra $H$ is isomorphic, as a coalgebra, to the cotensorial coalgebra $T^c({\mbox{Prim}(H)})$.\end{theorem}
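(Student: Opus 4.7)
The plan is to follow the Loday--Ronco strategy from \cite{LoRo2}, adapted to the magmatic setting. Since $T^c(\mbox{Prim}(H))$ is the cofree conilpotent coalgebra on $\mbox{Prim}(H)$, its universal property reduces the problem to constructing a linear retraction $\pi:\bar H\to \mbox{Prim}(H)$ of the inclusion $\mbox{Prim}(H)\hookrightarrow \bar H$: once $\pi$ is available, the induced coalgebra morphism $\Phi:H\to T^c(\mbox{Prim}(H))$ has $n$-th component $\pi^{\otimes n}\circ ({\Delta}^{red})^{n-1}$, and conilpotency of $H$ ensures that only finitely many components are nonzero on each element, so $\Phi$ is well-defined.

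The key is the construction of $\pi$, equivalently the vector-space splitting $\bar H = \mbox{Prim}(H)\oplus \bar H\cdot \bar H$. The unital infinitesimal relation gives, for $x,y\in\bar H$,
\begin{equation*}
\Delta^{red}(x\cdot y)=\sum x_{(1)}\otimes (x_{(2)}\cdot y)+\sum (x\cdot y_{(1)})\otimes y_{(2)}+x\otimes y,
\end{equation*}
so the ``leading term'' $x\otimes y$ forces $x\cdot y$ never to be primitive when $x,y\neq 0$; more importantly it shows that $\Delta^{red}$ separates decomposable elements modulo lower filtration strata. I would build $\pi$ by induction on the coradical filtration $\{\mathcal{F}_n(H)\}$: set $\pi=\mbox{id}$ on $\mathcal{F}_1(H)=\mbox{Prim}(H)$, and at the inductive step use the leading-term observation to choose a complement to $\mbox{Prim}(H)$ inside $\mathcal{F}_n(H)$ that is contained in $\bar H\cdot \bar H$ modulo $\mathcal{F}_{n-1}(H)$. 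Conilpotency then lets us assemble these partial retractions into a global $\pi$.

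It remains to check that $\Phi$ is bijective. Coassociativity and the definition make $\Phi$ a coalgebra morphism; for the isomorphism, I would filter both sides by the coradical filtration, observe that $\Phi$ is compatible with these filtrations, and verify inductively that the associated graded map $\mathcal{F}_n(H)/\mathcal{F}_{n-1}(H)\to \mbox{Prim}(H)^{\otimes n}$ is an isomorphism. The base case is the identity on $\mbox{Prim}(H)$; the inductive step uses that $\Delta^{red}(\mathcal{F}_n(H))\subseteq \mathcal{F}_{n-1}(H)\otimes \mathcal{F}_{n-1}(H)$ together with the splitting constructed above.

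The main obstacle is exactly the construction of the splitting $\bar H = \mbox{Prim}(H)\oplus \bar H\cdot\bar H$. In the associative case one has explicit Eulerian-type idempotents, but without associativity those formulas do not descend, so one must argue purely by induction on the coradical filtration, leveraging the presence of the ``$+x\otimes y$'' term in the unital infinitesimal relation to guarantee that no nonzero product of two non-scalar elements can be primitive. This is precisely the content carried out in \cite{HoLoRo}, and the present theorem is quoted from there.
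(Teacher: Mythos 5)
The paper itself does not prove this statement: Theorem \ref{teo:estmagmatic} is quoted from \cite{HoLoRo} without proof, so there is no internal argument to compare yours against. Your outline does follow the correct overall strategy: use cofreeness of $T^c({\mbox{Prim}(H)})$ to reduce everything to a linear retraction $\pi:{\ov H}\to {\mbox{Prim}(H)}$, define $\Phi$ with components $\pi^{\otimes n}\circ(\Delta^{red})^{n-1}$ (well defined by conilpotency), and prove bijectivity on the associated graded of the filtration ${\mathcal F}_n(H)$. Injectivity of the graded map is indeed automatic, since $(\Delta^{red})^{n-1}({\mathcal F}_n(H))\subseteq {\mbox{Prim}(H)}^{\otimes n}$ and $\pi$ restricts to the identity on primitives; surjectivity uses the compatibility relation, which makes $(\Delta^{red})^{n-1}$ of an $n$-fold product of primitives equal to their tensor product.

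The gap is exactly where you locate it: the construction of $\pi$, equivalently of a complement of ${\mbox{Prim}(H)}$ in ${\ov H}$. ``Choose a complement contained in ${\ov H}\cdot{\ov H}$ modulo ${\mathcal F}_{n-1}(H)$'' is not an argument; the assertion that the extra term $x\otimes y$ ``separates decomposable elements modulo lower filtration strata'' is precisely what has to be proved, and no mechanism for it is given. Moreover, your closing remark --- that the explicit idempotents of the associative case do not descend to the magmatic setting, forcing a purely inductive argument --- is the opposite of what happens in \cite{HoLoRo} (and in \cite{LoRo2}, \cite{LoRo3} for the associative case): one fixes a parenthesization, say the left comb $\omega_n(x_1,\dots ,x_{n+1})$, and sets $e:=\sum_{n\geq 0}(-1)^n\,\omega_n\circ(\Delta^{red})^{n}$, a finite sum on each element by conilpotency. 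The unital infinitesimal relation concerns a single binary product and never invokes associativity, so the computation showing that $e$ lands in ${\mbox{Prim}(H)}$ and restricts to the identity there goes through verbatim; this explicit $e$ is the retraction whose existence your sketch leaves open.
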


\begin{corollary} \label{cor:primel} The subspace ${\mbox{Prim}(\K[{\mathbf {PBT}}])}$ of primitive elements of the coalgebra $(\K[{\mathbf {PBT}}], \overset{\circledast}{\Delta})$ is spanned by the set of elements 
$\{ M_t\mid t\in \bigcup_{n\geq 1}{\mbox {$\rightthreetimes$-Irr}_n}\}$.\end{corollary}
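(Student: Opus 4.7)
The plan is to combine Theorem~\ref{th:formAS} with the unique $\rightthreetimes$-factorization from Remark~\ref{rem:irrele} to read off primitivity directly from the coefficients in the Moebius basis. First I would show that every $M_t$ with $t\in \bigcup_{n\geq 1}{\mbox {$\rightthreetimes$-Irr}_n}$ is primitive. Indeed, Theorem~\ref{th:formAS} gives
\begin{equation*}
\overset{\circledast}{\Delta}(M_t)=\sum_{z_1\rightthreetimes z_2=t}M_{z_1}\otimes M_{z_2},
\end{equation*}
and if $t$ is $\rightthreetimes$-irreducible, the only decompositions $t=z_1\rightthreetimes z_2$ are the trivial ones $(1_{\K},t)$ and $(t,1_{\K})$, so $\overset{\circledast}{\Delta}^{red}(M_t)=0$.

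Next I would show these elements span the whole primitive subspace. Since $\{M_t\}_{t\in {\mathbf {PBT}}}$ is a basis, any homogeneous element writes as $x=\sum_t a_t\, M_t$. If $t=t_1\rightthreetimes\dots\rightthreetimes t_r$ is the unique factorization into $\rightthreetimes$-irreducibles from Remark~\ref{rem:irrele}, the nontrivial decompositions $t=z_1\rightthreetimes z_2$ are in bijection with the cuts $1\leq k<r$ of the irreducible sequence, giving $z_1^{t,k}=t_1\rightthreetimes\dots\rightthreetimes t_k$ and $z_2^{t,k}=t_{k+1}\rightthreetimes\dots\rightthreetimes t_r$. Theorem~\ref{th:formAS} thus yields
\begin{equation*}
\overset{\circledast}{\Delta}^{red}(M_t)=\sum_{k=1}^{r-1} M_{z_1^{t,k}}\otimes M_{z_2^{t,k}}.
\end{equation*}

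The key point is that each pair $(z_1,z_2)$ of nontrivial trees appears in $\overset{\circledast}{\Delta}^{red}(M_t)$ for at most one $t$, namely $t=z_1\rightthreetimes z_2$. Since $\{M_{z_1}\otimes M_{z_2}\}$ is a basis of $\K[{\mathbf{PBT}}]\otimes \K[{\mathbf{PBT}}]$, the equation $\overset{\circledast}{\Delta}^{red}(x)=0$ then forces $a_t=0$ for every $t$ whose irreducible factorization has $r\geq 2$ factors. Therefore $x$ lies in the span of $\{M_t\mid t\in \bigcup_{n\geq 1}{\mbox {$\rightthreetimes$-Irr}_n}\}$, finishing the proof.

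I do not expect a serious obstacle here, since all the combinatorial work is already packaged in Theorem~\ref{th:formAS} and in the uniqueness of the irreducible decomposition. The only delicate point to state clearly is that the matrix expressing $\overset{\circledast}{\Delta}^{red}$ in the Moebius basis is essentially a permutation matrix indexed by cuts, so that linear independence of the summands across different $t$ is automatic.
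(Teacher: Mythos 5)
Your proposal is correct, and the first half (primitivity of $M_t$ for $\rightthreetimes$-irreducible $t$ via Theorem~\ref{th:formAS}) coincides with the paper's. For the spanning direction, however, you take a genuinely different and more self-contained route. The paper invokes the structure theorem (Theorem~\ref{teo:estmagmatic}, from \cite{HoLoRo}) to identify $\K[{\mathbf {PBT}}]$ with the cotensor coalgebra $T^c({\mbox{Prim}(\K[{\mathbf {PBT}}])})$, and separately with $T^c(\bigoplus_{n\geq 1}\K[{\mbox{$\rightthreetimes$-Irr}_n}])$ via associativity of $\rightthreetimes$ and the unique factorization into irreducibles; equality of primitives then follows by comparing graded dimensions. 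You instead compute directly: expanding a homogeneous element in the Moebius basis, the formula of Theorem~\ref{th:formAS} together with the unique factorization of Remark~\ref{rem:irrele} shows that the coefficient of the basis element $M_{z_1}\otimes M_{z_2}$ (both factors nontrivial) in $\overset{\circledast}{\Delta}^{red}(x)$ is exactly $a_{z_1\rightthreetimes z_2}$, so vanishing of the reduced coproduct kills every $a_t$ with $t$ reducible. This avoids the external structure theorem and the dimension count entirely, and in fact yields the slightly stronger statement that $\{M_t\}_{t\ \rightthreetimes\text{-irreducible}}$ is a \emph{basis} of the primitives, with an explicit description of which Moebius coefficients must vanish. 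The one point worth stating explicitly in your write-up is that distinct cuts $k$ of a fixed $t$ produce pairs of distinct bidegrees, and distinct reducible trees $t$ produce disjoint sets of pairs $(z_1,z_2)$, so no cancellation can occur across the sum; you allude to this but it is the crux of the linear-independence claim. With that made precise, your argument is complete.
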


\begin{proof} Theorem \ref{th:formAS} implies that the subspace spanned by $\bigcup_{n\geq 1} {\mbox {$\rightthreetimes$-Irr}_n}$ is contained in the subspace of primitive elements of 
$\K[{\mathbf {PBT}}]$. 

On the other hand, we know that $\{M_t\mid t\in \bigcup_{n\geq 1} {\mathbf{PBT}_n}\}$ is a basis of $\K[{\mathbf {PBT}}]$ and that the vector space $\K[{\mathbf {PBT}}]$ is isomorphic to $T^c({\mbox{Prim}(\K[{\mathbf {PBT}}])})$. The result follows easily using that the product $\rightthreetimes$ is associative, and that $\K[{\mathbf {PBT}}]$ is isomorphic to
 $T^c(\bigoplus_{n\geq 1} \K[ {\mbox {$\rightthreetimes$-Irr}_n}])$.\end{proof}

\bigskip

\subsection{Free $S$-magmatic infinitesimal algebras}
\medskip

In the present section we work on planar binary rooted trees with the internal vertices colored by the elements of some set $S$. Recall that any element in ${\mbox{\bf PBT}_n}$ has $n-1$ internal vertices, for $n\geq 1$. 

\begin{notation} \label{notn:Strees} Let $S$ be a set. For $n\geq 2$, ${\mbox{\bf PBT}_n^{S}}$ denotes the set of planar rooted binary trees with $n$ leaves, with the internal vertices colored by the elements of $S$. For $n=1$, ${\mbox{\bf PBT}_n^{S}} = {\mbox{\bf PBT}_1} = \{ \vert\}$.

As in section $1$, we denote by ${\mbox{\bf PBT}^{S}}$ the disjoint union $\bigcup _{n\geq 1} {\mbox{\bf PBT}_n^{S}}$, and by ${\mathbb K}[{\mbox{\bf PBT}^{S}}]$ the ${\mathbb K}$-vector space spanned by the graded set ${\mbox{\bf PBT}_n^{S}}$.

An element in ${\mbox{\bf PBT}_n^{S}}$ is a pair ${\underline t} = (t, (s_1,\dots ,s_{n-1}))$, where $t$ is a planar binary rooted tree with $n$ leaves and $(s_1,\dots ,s_{n-1})\in S^{n-1}$ is the set of colors on the vertices of $t$ enumerated from left to right. \end{notation}

For example
\begin{center}
	\includegraphics[width=0.5\textwidth]{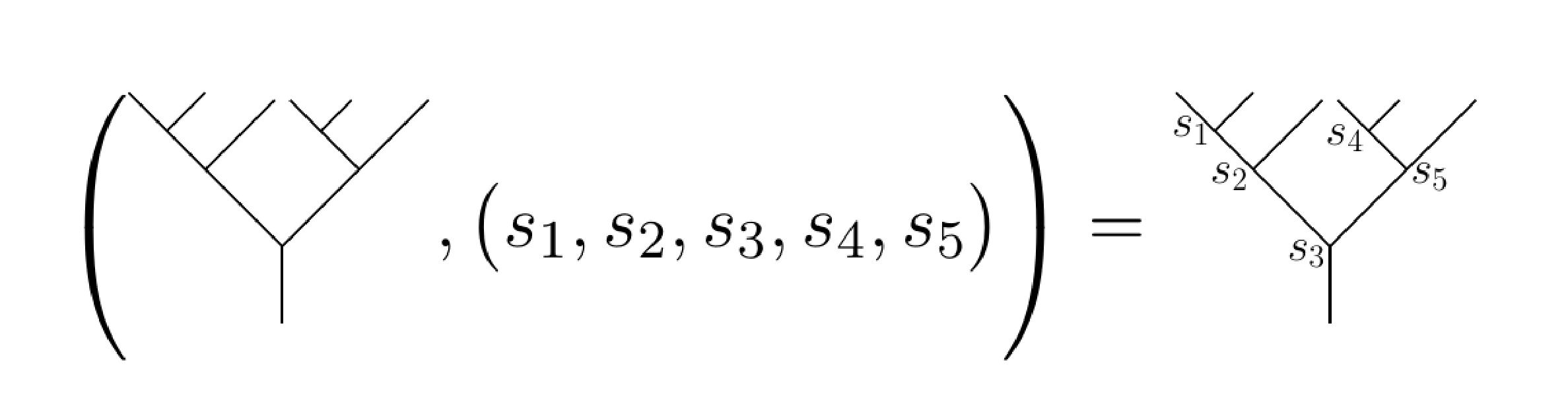}
\end{center}

\medskip

\begin{definition}\label{def:wedgecolored} Let $S$ be a set. For any pair of elements ${\underline t} =(t, (s_1,\dots ,s_{n-1}))$ and ${\underline w} = (w, (r_1,\dots ,r_{m-1}))$ in ${\mbox{\bf PBT}^{S}}$, and any $s\in S$,\begin{enumerate}[(a)]
\item  the $s$-wedge of ${\underline t}$ and ${\underline w}$ is the colored tree ${\underline t}\veebar_s{\underline w}$ obtained by:\begin{enumerate}[(i)]
\item the underline tree of ${\underline t}\veebar_s{\underline w}$ is $t\veebar s$,
\item the colors on the vertices of $t\veebar w$ are given by the sequence $(s_1,\dots, s_{n-1},s,r_1,\dots ,r_{m-1})$,
\end{enumerate}
\item the product $ {\underline t}\rightthreetimes _s {\underline w}$ is the colored tree 
\begin{equation*} {\underline t}\rightthreetimes _s {\underline w} : = (t \rightthreetimes s, (s_1,\dots, s_{n-1},s,r_1,\dots ,r_{m-1})).\end{equation*}
\item the {\it Tamari order} on the set  ${\mbox{\bf PBT}_n^{S}}$ is the product of the partially ordered sets $({\mbox{\bf PBT}_n}, \leq _T)$ and $(S^{\times(n-1)}, \leq)$, where $\leq$ is trivial order on the cartesian product.
In order to simplify notation, when no confusion may arise, we denote this order by $\leq_T$.
\end{enumerate}
\end{definition}

Both products $\veebar_s$ and $\rightthreetimes _s$ extend by linearity to binary products on the vector space ${\mathbb K}\bigl[{\mbox{\bf PBT}^{S}}\bigr]$, for $s\in S$. 

Let $\K[S]$ denote the vector space spanned by the set $S$. For any tree $t\in {\mbox{\bf PBT}_n}$ and elements $x_1,\dots ,x_{n-1}\in \K[S]$ with $x_i=\sum_{s\in S}a_{s}^is$ for $1\leq i\leq n$, we define the element 
\begin{equation*} (t, (x_1,\dots ,x_{n-1})) :=\sum_{(s_1,\dots ,s_n)\in S^n}a_{s_1}^1\dots a_{s_{n-1}}^{n-1}
  (t, (s_1, \dots , s_{n-1}))\end{equation*}
in ${\mbox{\bf PBT}_n^{S}}$.

\begin{remark}\label{rem:graftcolored} Note that\begin{enumerate}[(a)] 
\item the binary operation $\rightthreetimes _s$ is a graded associative product, for $s\in S$. Moreover, we have that
\begin{equation*} \rightthreetimes _r\circ (\rightthreetimes _s \otimes {\mbox{id}}) = \rightthreetimes _s\circ ({\mbox{id}}\otimes \rightthreetimes _r),\end{equation*}
for any elements $r, s\in S$.
\item for any colored tree ${\underline t}\in {\mbox{\bf PBT}^{S}}$ there exists a unique element $s\in S$ and unique colored trees ${\underline t^l}$ and ${\underline t^r}$ in ${\mbox{\bf PBT}^S}$ satisfying that ${\underline t} = {\underline t^l}\veebar_s {\underline t^r}$.
 \end{enumerate}
\end{remark}

Clearly, the free $S$-magmatic algebra over one element is the vector space ${\mathbb K}\bigl[{\mbox{\bf PBT}^{S}}\bigr]$ equipped with the products $\{\veebar _s \}_{s\in S}$, where we identify the generator with the tree $\vert \in {\mbox{\bf PBT}_1}$. 
\medskip

For any vector space $V$, the free $S$-magmatic algebra over $V$ is the vector space $\bigoplus_{n\geq 0}{\mathbb K}\bigl[{\mbox{\bf PBT}_n^{S}}\bigr]\otimes V^{\otimes n}$ with the product $\veebar_s$ given by
\begin{align*}
(t, (s_1, \dots ,s_{n-1}))\otimes (&v_1\otimes \dots \otimes v_n)\ \veebar_s \ (w, (r_1, \dots ,r_{m-1}))\otimes (u_1\otimes \dots \otimes u_m) :=\\
&(t\veebar w, (s_1,\dots, s_{n-1},s, r_1,\dots ,r_{m-1}))\otimes (v_1\otimes \dots \otimes v_n\otimes u_1\otimes \dots \otimes u_m),\end{align*}
for $s\in S$, we denote it by $\mbox{{\it S}-Mag}_{\K}(V)$.
\medskip

For any vector space $V$, the coproduct introduced in Definition \ref{def:topbot} extends to $\mbox{{\it S}-Mag}_{\K}(V)$ as follows:
\begin{align*}
{\overset{\circledast}{\Delta}}&((t, (s_1,\dots, s_{n-1}))\otimes (v_1\otimes \dots \otimes v_n)) =\\
&\sum_{i=0}^{n} ((t_{(1)}^i, (s_1,\dots , s_{i-1}))\otimes (v_1\otimes \dots \otimes v_i))\otimes ((t_{(2)}^i, (s_{i+1},\dots, s_{n-1}))\otimes (v_{i+1}\otimes \dots \otimes v_n)),
\end{align*} 
where the notation is the one introduced in Remark \ref{rem:descriptcoprod}, for any tree $t\in {\mbox{\bf PBT}_n}$, any elements $(s_1,\dots ,s_{n-1})\in S^{\times (n-1)}$ and $v_1,\dots ,v_n\in V$. 
\medskip

The following result is a straightforward consequence of Lemma \ref{lem:adcoprod}

\begin{lemma} \label{lem:extofcoprod}  Let $S$ be a set, for any vector space $V$ and any element $s\in S$, the data \\$({\mbox{{\it S}-Mag}_{\K}(V)}, \veebar_s , {\overset{\circledast}{\Delta}})$ is a magmatic infinitesimal bialgebra .
\end{lemma}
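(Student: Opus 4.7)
The plan is to reduce the claim to the uncolored case treated in Lemma \ref{lem:adcoprod}(1), viewing the $S$-colorings and $V$-labels as passive decorations transported by the combinatorial operations on trees.

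First, I would unfold the extended coproduct using the cut description from Remark \ref{rem:descriptcoprod}. For $X := (t,(s_1,\dots,s_{n-1}))\otimes(v_1\otimes\cdots\otimes v_n)$, the coproduct $\overset{\circledast}{\Delta}(X) = \sum_{i=0}^n X_{(1)}^i \otimes X_{(2)}^i$ cuts the underlying tree at each internal vertex, assigning the colors $(s_1,\dots,s_{i-1})$ and factors $(v_1,\dots,v_i)$ to the left half and $(s_{i+1},\dots,s_{n-1})$ and $(v_{i+1},\dots,v_n)$ to the right half.

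Second, for $X$ and $Y := (w,(r_1,\dots,r_{m-1}))\otimes(u_1\otimes\cdots\otimes u_m)$, I would compute $\overset{\circledast}{\Delta}(X\veebar_s Y)$ by partitioning the internal vertices of $t\veebar w$ (listed left to right) into three groups: the $n-1$ vertices coming from $t$, the new root in position $n$ colored $s$, and the $m-1$ vertices coming from $w$. Cuts in the first group produce exactly the terms $\sum X_{(1)}\otimes(X_{(2)}\veebar_s Y)$ (since the right half of each such cut always contains the new root and all the data of $Y$, $\veebar_s$ reassembles them correctly), cuts in the third group give $\sum(X\veebar_s Y_{(1)})\otimes Y_{(2)}$ by the symmetric argument, and the single cut at position $n$ yields $X\otimes Y$.

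Finally, with the convention $1_\K\veebar_s Z = Z = Z\veebar_s 1_\K$ used in the uncolored case, the sums $\sum X_{(1)}\otimes(X_{(2)}\veebar_s Y)$ and $\sum(X\veebar_s Y_{(1)})\otimes Y_{(2)}$ each already contain one instance of $X\otimes Y$ coming from their ``unit'' boundary terms, so the cut at the new root ends up counted twice; subtracting $X\otimes Y$ once corrects this overcounting and produces exactly the unital infinitesimal relation for $\veebar_s$. The only real issue is bookkeeping, namely checking that the colors and $V$-factors in the two halves of each cut match the way $\veebar_s$ concatenates its inputs; since both $\veebar_s$ and $\overset{\circledast}{\Delta}$ act on these decorations by the same concatenation/splitting rule used in the uncolored setting, the matching is automatic and the verification is formally identical to that of Lemma \ref{lem:adcoprod}(1).
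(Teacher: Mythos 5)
Your verification is correct and matches the paper's intent: the paper states this lemma without proof as a ``straightforward consequence'' of the uncolored case, and your cut-by-cut bookkeeping (three groups of internal vertices, with the boundary term $X\otimes Y$ appearing once in each of the two sums and hence cancelled by the subtracted term) is exactly that straightforward verification, with the colors and $V$-factors transported passively as you say. One minor citation point: the uncolored model for $\veebar_s$ is the defining recursion for $\overset{\circledast}{\Delta}$ in Definition~\ref{def:topbot}(2)(c) (the unital infinitesimal relation for the non-associative product $\veebar$ holds there by definition), rather than Lemma~\ref{lem:adcoprod}(1), which concerns the associative product $\rightthreetimes$; this does not affect your argument, which is self-contained via the cut description.
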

\bigskip

\subsection{Primitive elements and Aguiar-Sottile\rq s formula}

Let $S$ be a set. From Theorem \ref{th:formAS} and Lemma \ref{lem:extofcoprod} , we get that the element 
$(M_{t}, (s_1,\dots ,s_n))$ is primitive in ${\mathbb K}\bigl[{\mbox{\bf PBT}_{n+1}^{S}}\bigr]$ for any $t\in {\mbox{$\rightthreetimes$-Irr}_n}$ and any $(s_1,\dots ,s_n)\in S^{\times n}$. 
\medskip

Let $S$ be a non-empty set and let $s_{\bullet}\in S$ be a fixed element. For any pair of colored trees ${\underline t}$ and ${\underline w}$ in ${\mathbb K}\bigl[{\mbox{\bf PBT}^{S}}\bigr]$, and any $s\neq s_{\bullet}$, define 
\begin{equation*} {\underline t}\veebar_{\hat{s}}{\underline w} :=  {\underline t}\veebar_s {\underline w} - {\underline t}\veebar_{s_{\bullet}}{\underline w},\ {\rm for}\ s\neq s_{\bullet}.
\end{equation*} 

It is immediate to see that, as the coproduct ${\overset{\circledast}{\Delta}}$ satisfies the unital infinitesimal condition with any product $\veebar _s$, the element 
${\underline t}\veebar_{\hat{s}} {\underline w}$ is primitive whenever ${\underline t}$ and ${\underline w}$ are primitive elements in ${\mathbb K}\bigl[{\mbox{\bf PBT}^{S}}\bigr]$.
\medskip

\begin{remark} \label{rem:competc} Let ${\underline t} \in {\mathbb K}\bigl[{\mbox{\bf PBT}_n^{S}}\bigr]$ and let $V$ be a vector space. If ${\underline t}$ is a primitive element in ${\mathbb K}\bigl[{\mbox{\bf PBT}^{S}}\bigr]$, then $({\underline t}, v_1\otimes \dots \otimes v_n))$ is a primitive element in $\mbox{{\it S}-Mag}_{\K}(V)$ for any collection of elements $v_1,\dots ,v_n\in V$.
\medskip

Moreover, Corollary \ref{cor:delteofgraft} implies that if ${\underline t}, {\underline w}_2,\dots  , {\underline w}_{n-1}$ is a family of primitive elements in ${\mathbb K}\bigl[{\mbox{\bf PBT}^{S}}\bigr]$ such that $\vert{\underline t}\vert =n$, then the element ${\underline t} {\underline{\circ}} (\vert, {\underline w}_2,\dots ,{\underline w}_{n-1}, \vert)$ is primitive too.\end{remark}

\bigskip

We want to compute the subspace of primitive elements of the coalgebra $({\mathbb K}\bigl[{\mbox{\bf PBT}^{S}}\bigr],  {\overset{\circledast}{\Delta}})$ for any set $S$.  From Theorem \ref{teo:estmagmatic} we know that ${\mathbb K}\bigl[{\mbox{\bf PBT}^{S}}\bigr]$ is isomorphic as a coalgebra to the cotensor coalgebra $T^c({\mbox{Prim}}({\mathbb K}\bigl[{\mbox{\bf PBT}^{S}}\bigr]))$.

\medskip

\begin{definition} \label{def:idealprim} For $n\geq 1$, define the subset ${\mathcal I}_n^S$ of $\K[ {\mbox{\bf PBT}_n^{S}}]$ as follows:\begin{enumerate}[(i)]
\item ${\mathcal I}_1^S:=\{\vert\} = {\mbox{\bf PBT}_1^{S}}$, 
\item ${\mathcal I}_2^S:=\{ \vert \veebar_{\hat{s}}\vert\}_{s\in S\setminus \{s_{\bullet}\}}$,
\item for $n>2$, ${\mathcal I}_n^S$ is the union of the subsets \begin{itemize}
\item $\{ (M_{t}, (s_1,\dots ,s_{n-1}))\mid t\in  {\mbox{$\rightthreetimes$-Irr}_{n}}, (s_1,\dots ,s_{n-1})\in S^{n-1}\}$,
\item $\{ {\underline x}  {\underline{\circ}} ({\underline w}_1,\dots .{\underline w}_r)\mid {\underline x}, {\underline w}_1,\dots , {\underline w}_r\in \bigcup_{i=1}^{n-1}{\mathcal I}_i^S,\ {\rm with}\ 
\sum_{j=1}^r \vert w_j\vert = n\}$,\end{itemize}\end{enumerate}
and let $\K[{\mathcal I}_n^S]$ be the subspace of $\K[{\mbox{\bf PBT}_n^{S}}]$ spanned by ${\mathcal I}_n^S$.\end{definition}

Theorem \ref{th:formAS} and Corollary \ref{cor:delteofgraft} state that the subspace $\bigoplus_{n\geq 1} \K[{\mathcal I}_n]$ is contained in ${\mbox{Prim}}({\mathbb K}\bigl[{\mbox{\bf PBT}^{S}}\bigr])$. Our aim is to see that they are equal.
\medskip

\begin{lemma} \label{lem:propfirststep} Let $S$ be a set and let $s_{\bullet}\in S$ be a fixed element. For $n\geq 0$ and $s\in S$, $s\neq s_{\bullet}$, the colored tree $(\vert\veebar {\mathbf 0}_{n}, (s, s_{\bullet},\dots ,s_{\bullet}))\in {\mbox{\bf PBT}_{n+1}^{S}}$ satisfies the equality
\begin{equation*}(\vert\veebar{\mathbf 0}_{n}, (s, s_{\bullet},\dots ,s_{\bullet})) =\sum_{j=1}^{n+1}({\mathbf 0}_{j}, (s_{\bullet},\dots ,s_{\bullet}))\circ_1 (M^{n+2-j}, (s, s_{\bullet}, \dots ,s_{\bullet})),\end{equation*}
where $(M^k, (s,s_{\bullet},\dots ,s_{\bullet}))$ is defined as follows:\begin{enumerate}[(i)]
\item $(M^1,\emptyset ) := \vert$ and $(M^2, (s)):=\vert \veebar_{\hat {s}} \vert = (\scalebox{0.05}{	\includegraphics[width=0.4\textwidth]{dib4.eps}
}, ({\hat{s}}))$ ,
\item $(M^k, (s, s_{\bullet},\dots ,s_{\bullet})) := (M_{\vert\veebar {\mathbf 0}_{k-1}}, (s, s_{\bullet},\dots ,s_{\bullet}))$, for $k\geq 3$.\end{enumerate}\end{lemma}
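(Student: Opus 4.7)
The plan is to reduce the identity to a single one-step recursion and then iterate. Writing $T_n := (\vert\veebar\mathbf{0}_n, (s, s_{\bullet}, \ldots, s_{\bullet}))$ for the left-hand side, with the convention $T_0 := \vert$, the key step is to establish
\begin{equation*}
T_n \;=\; (M^{n+1}, (s, s_{\bullet}, \ldots, s_{\bullet})) \;+\; (\mathbf{0}_2,(s_{\bullet}))\circ_1 T_{n-1}, \qquad n\geq 1.
\end{equation*}
Once this recursion is proved, an easy induction on $n$ closes the argument: substituting the inductive formula for $T_{n-1}$ into the recursion and combining with the identities $(\mathbf{0}_2,(s_{\bullet}))\circ_1(\mathbf{0}_j,(s_{\bullet},\ldots))=(\mathbf{0}_{j+1},(s_{\bullet},\ldots))$ (which follows from associativity of grafting at the leftmost leaf) and $(M^{n+1},\ldots) = (\mathbf{0}_1,\emptyset)\circ_1 (M^{n+1},\ldots)$ produces the claimed sum. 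The base case $n=1$ is simply the $n=1$ instance of the recursion, together with $M^1=\vert$.

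The recursion itself is proved by distinguishing two cases. For $n\geq 2$, I would observe that the underlying tree factors as a grafting, $\vert\veebar\mathbf{0}_n = \mathbf{1}_3\underline{\circ}(\vert, \mathbf{0}_{n-1}, \vert)$. Proposition \ref{prop:Mtforanyt} then gives $M_{\vert\veebar\mathbf{0}_n} = M_{\mathbf{1}_3}\underline{\circ}(\vert, \mathbf{0}_{n-1}, \vert)$, since $M_{\vert}=\vert$ and $M_{\mathbf{0}_{n-1}}=\mathbf{0}_{n-1}$ (the latter because $\mathbf{0}_{n-1}$ is the minimum of the Tamari order). Substituting $M_{\mathbf{1}_3}=\mathbf{1}_3-\mathbf{0}_3$ from Proposition \ref{Moebiuisofcomb} and expanding the graftings yields
\begin{equation*}
M_{\vert\veebar\mathbf{0}_n} \;=\; \vert\veebar\mathbf{0}_n \;-\; (\vert\veebar\mathbf{0}_{n-1})\veebar\vert \;=\; \vert\veebar\mathbf{0}_n \;-\; \mathbf{0}_2\circ_1(\vert\veebar\mathbf{0}_{n-1}).
\end{equation*}
Since $(M^{n+1},(s,s_{\bullet},\ldots)) := (M_{\vert\veebar\mathbf{0}_n},(s,s_{\bullet},\ldots))$ for $n\geq 2$, attaching the coloring on both sides and rearranging gives the recursion. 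For $n=1$ the recursion must be verified separately from the explicit formula $M^2=\vert\veebar_{\hat s}\vert = (\vert\veebar\vert,s)-(\vert\veebar\vert,s_{\bullet})$: then $(M^2,(s))+(\mathbf{0}_2,(s_{\bullet}))\circ_1\vert = (\vert\veebar\vert,s)-(\vert\veebar\vert,s_{\bullet})+(\vert\veebar\vert,s_{\bullet})=T_1$.

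The main obstacle, and really the only subtle point once the recursive skeleton is in place, is the bookkeeping of colors: at each iteration the $s_{\bullet}$ introduced at the new root by the grafting $(\mathbf{0}_2,(s_{\bullet}))\circ_1$ must slot into the correct position of the color string of the resulting tree. This works automatically because every color in $(s,s_{\bullet},\ldots,s_{\bullet})$ after the leading $s$ equals $s_{\bullet}$, so the color string is invariant under appending or inserting an $s_{\bullet}$-colored internal node at any non-leading position; in particular the internal nodes of $\mathbf{0}_j$ and the new root all receive $s_{\bullet}$, making the colors of $(\mathbf{0}_j,(s_{\bullet},\ldots))\circ_1 (M^{n+2-j},(s,s_{\bullet},\ldots))$ match the uniform $(s, s_{\bullet}, \ldots, s_{\bullet})$ required on $T_n$. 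Beyond this the argument requires only Proposition \ref{Moebiuisofcomb}, Proposition \ref{prop:Mtforanyt}, and elementary identities for grafting at the leftmost leaf.
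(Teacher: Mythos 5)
Your proof is correct and follows essentially the same route as the paper: both rest on the one-step identity $M_{\vert\veebar{\mathbf 0}_{n}} = \vert\veebar{\mathbf 0}_{n} - (\vert\veebar{\mathbf 0}_{n-1})\veebar\vert$ and then induct on $n$, absorbing the new $s_{\bullet}$-colored root into the combs ${\mathbf 0}_{j}$. The only difference is cosmetic: you derive that identity explicitly from Propositions \ref{Moebiuisofcomb} and \ref{prop:Mtforanyt} via the factorization $\vert\veebar{\mathbf 0}_{n}={\mathbf 1}_{3}{\underline{\circ}}(\vert,{\mathbf 0}_{n-1},\vert)$, whereas the paper simply asserts it.
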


Note that the element $M^k \in {\mathcal I}_{k}^S$, for $k\geq 1$.

\begin{proof} For $n =1$, we have that 
\begin{align*}(\vert\veebar{\mathbf 0}_{1}, (s))  &= (\scalebox{0.05}{	\includegraphics[width=0.4\textwidth]{dib4.eps}
}, (s)) =\\
&(M^2, (s)) + ({\mathbf 0}_{2}, (s_{\bullet}))= \vert \circ_1 (M^2, (s)) +  ({\mathbf 0}_{2}, (s_{\bullet}))\circ_1 (M^1, \emptyset), \end{align*} and the result is true.

Suppose that the formula holds for all $0\leq r <  n$, we have that 
\begin{equation*} M_{\vert \veebar {\mathbf 0}_{n} } = \vert \veebar {\mathbf 0}_{n} - (\vert \veebar  {\mathbf 0}_{n-1} )\veebar \vert.\end{equation*}
So, we get that
\begin{equation*} (\vert \veebar {\mathbf 0}_{n}, (s,s_{\bullet},\dots ,s_{\bullet})) = (M_{\vert \veebar {\mathbf 0}_{n} }, (s,s_{\bullet},\dots ,s_{\bullet})) + (\vert \veebar  {\mathbf 0}_{n-1}, (s, s_{\bullet},\dots ,s_{\bullet}) )\veebar_{s_{\bullet}} \vert.\end{equation*}

Applying a recursive argument. we have that
\begin{align*}(\vert \veebar {\mathbf 0}_{n-1}, (s, s_{\bullet},\dots ,s_{\bullet}) )\veebar_{s_{\bullet}} \vert &=
\sum_{j=1}^n(({\mathbf 0}_{j}, (s_{\bullet},\dots ,s_{\bullet}))\circ_1 (M_{s}^{n+1-j}, (s, s_{\bullet}, \dots ,s_{\bullet}))) \veebar_{s_{\bullet}} \vert =\\
&\sum_{j=1}^n ({\mathbf 0}_{j+1}, (s_{\bullet},\dots ,s_{\bullet}))\circ_1 (M_{s}^{n+1-j}, (s, s_{\bullet}, \dots ,s_{\bullet}))=\\
&\sum_{j=2}^{n+1} ({\mathbf 0}_{j}, (s_{\bullet},\dots ,s_{\bullet}))\circ_1 (M_{s}^{n+2-j}, (s, s_{\bullet}, \dots ,s_{\bullet})).\end{align*}

Therefore, we get
\begin{align*} (\vert \veebar {\mathbf 0}_{n}, (s, s_{\bullet},\dots ,s_{\bullet})) &=
 (M_{\vert \veebar {\mathbf 0}_{n} }, (s,s_{\bullet},\dots ,s_{\bullet})) + (\vert \veebar  {\mathbf 0}_{n-1}, (s, s_{\bullet},\dots ,s_{\bullet}) )\veebar_{s_{\bullet}} \vert=\\
&\sum_{j=1}^{n+1} ({\mathbf 0}_{j}, (s_{\bullet},\dots ,s_{\bullet}))\circ_1 (M^{n+2-j}, (s, s_{\bullet}, \dots ,s_{\bullet})),\end{align*}
which ends the proof.
\end{proof}

\medskip

We use the previous lemma to get the following result.

\begin{proposition}\label{prop:structtheo} 
Let $S$ be a non-empty set and $s_{\bullet}\in S$. Any element in $\K[ {\mbox{\bf PBT}_n^{S}}]$ is a linear combination of elements of type
\begin{equation*} ({\mathbf 0}_{r}, (s_{\bullet},\dots ,s_{\bullet})){\underline{\circ}} ({\underline t}_1,\dots ,{\underline t}_r),\end{equation*}
where ${\underline t}_i\in \K[{\mathcal I}_{m_i}^S]$ for $1\leq i \leq r$, with $\sum_{i=1}^r m_i = n$.\end{proposition}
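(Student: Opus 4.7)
I proceed by induction on $n$. The base cases $n=1$ and $n=2$ are straightforward: the unique tree in $\mathbf{PBT}_1$ is $\vert\in\mathcal{I}_1^S$, and any degree-two colored tree $\vert\veebar_s\vert$ with $s\in S$ is either equal to $(\mathbf{0}_2,(s_\bullet))\underline{\circ}(\vert,\vert)$ (if $s=s_\bullet$) or decomposes as $\vert\veebar_{\hat s}\vert+(\mathbf{0}_2,(s_\bullet))$ (if $s\neq s_\bullet$), both expressions being of the required form since $\vert\veebar_{\hat s}\vert\in\mathcal{I}_2^S$.

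For the inductive step $n\geq 3$, the main tool I will invoke is the structure theorem for conilpotent unital magmatic infinitesimal bialgebras (Theorem~\ref{teo:estmagmatic}, from \cite{HoLoRo}), applied to the bialgebra $(\K[\mathbf{PBT}^S],\veebar_{s_\bullet},\overset{\circledast}{\Delta})$ -- which is indeed of this type by Lemma~\ref{lem:extofcoprod}. The theorem yields a coalgebra isomorphism $\K[\mathbf{PBT}^S]\cong T^c(\mbox{Prim}(\K[\mathbf{PBT}^S]))$, and the construction in \cite{HoLoRo} identifies a tensor $p_1\otimes\cdots\otimes p_r$ of primitive elements with the left-$\veebar_{s_\bullet}$-associated product $(\mathbf{0}_r,(s_\bullet,\dots,s_\bullet))\underline{\circ}(p_1,\dots,p_r)$. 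Consequently every element of $\K[\mathbf{PBT}_n^S]$ is a linear combination of such left-$\veebar_{s_\bullet}$-products of primitive elements, so the proposition reduces to verifying that, in each degree $m\leq n$, one has $\mbox{Prim}(\K[\mathbf{PBT}_m^S])=\K[\mathcal{I}_m^S]$.

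The inclusion $\K[\mathcal{I}^S]\subseteq\mbox{Prim}$ is the easy direction: it follows from Theorem~\ref{th:formAS}, Lemma~\ref{lem:extofcoprod} and Remark~\ref{rem:competc}, which together say that the irreducible Moebius elements $(M_t,\vec s)$ are primitive, that $\veebar_{\hat s}$-products of primitive elements are primitive, and that graftings of primitives of the form allowed in Definition~\ref{def:idealprim} remain primitive; these are precisely the building blocks of $\mathcal{I}^S$. I plan to establish the converse inclusion by a parallel induction on $m$: given a primitive $p$ of degree $m$, I apply the structure theorem to express $p$ as a sum of left-$\veebar_{s_\bullet}$-products of primitives of strictly smaller degree (which by the inductive hypothesis already sit in $\K[\mathcal{I}^S_{<m}]$) together with an $r=1$ contribution; exploiting $\overset{\circledast}{\Delta}^{red}(p)=0$ and the explicit description of the coproduct in Remark~\ref{rem:descriptcoprod}, the summands with $r\geq 2$ must cancel, leaving $p$ itself as a combination of the required $\mathcal{I}^S$-elements.

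\textbf{Main obstacle.} The hardest step will be extracting $p$ from its structure-theoretic decomposition and recognizing the result as a member of $\K[\mathcal{I}_m^S]$ -- equivalently, verifying that the recursive recipe of Definition~\ref{def:idealprim} is rich enough to exhaust all of $\mbox{Prim}(\K[\mathbf{PBT}_m^S])$. Here Lemma~\ref{lem:propfirststep} gives the prototype of the explicit rewriting that will be iterated in the general case: reducing a tree with non-$s_\bullet$ colors along its left spine via $\veebar_s=\veebar_{\hat s}+\veebar_{s_\bullet}$, collecting the $\veebar_{\hat s}$-contributions into designated Moebius primitives $M^k$, and absorbing the rest into outer $s_\bullet$-combs. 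Combined with Theorem~\ref{th:formAS} and Lemma~\ref{lem:Moebirred}, which handle $\rightthreetimes$-reducible trees by factoring their Moebius elements, this coefficient accounting should close the induction and yield the desired identification $\mbox{Prim}(\K[\mathbf{PBT}_m^S])=\K[\mathcal{I}_m^S]$, from which the proposition follows.
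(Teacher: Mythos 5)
There is a genuine gap, and it is one of circularity. You reduce the proposition to the identification $\mbox{Prim}(\K[{\mbox{\bf PBT}_m^{S}}])=\K[{\mathcal I}_m^S]$, but in the paper that identification is Theorem \ref{th:structure}, and it is \emph{deduced from} Proposition \ref{prop:structtheo} by a dimension count: the proposition shows (by direct computation, with no reference to primitives) that $\K[{\mbox{\bf PBT}^{S}}]$ is spanned by left $\veebar_{s_{\bullet}}$-combs of elements of $\K[{\mathcal I}^S]$, Theorem \ref{teo:estmagmatic} gives $\K[{\mbox{\bf PBT}^{S}}]\cong T^c({\mbox{Prim}})$, and since $\K[{\mathcal I}^S]\subseteq {\mbox{Prim}}$ the graded dimensions force equality. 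Your plan runs this implication backwards and therefore still owes a proof of the hard inclusion ${\mbox{Prim}}(\K[{\mbox{\bf PBT}_m^{S}}])\subseteq \K[{\mathcal I}_m^S]$. The argument you sketch for it does not work: if $p$ is primitive, then under the isomorphism with $T^c({\mbox{Prim}})$ it corresponds to a length-one tensor, so the decomposition you invoke has no summands of length $r\geq 2$ to cancel and simply returns $p$ itself; you learn nothing about whether $p$ lies in the span of the Moebius elements, hatted wedges and graftings that define ${\mathcal I}_m^S$. That inclusion is precisely the content you were asked to prove, and deferring it to "coefficient accounting" leaves the proof empty at its core.

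The paper's route avoids this entirely. It first observes that $\K[{\mathcal I}^S]$ is closed under the graftings $\circ_i$ and ${\underline{\circ}}$ and that every colored tree splits as ${\underline t}^l\veebar_s{\underline t}^r$ with both factors of smaller degree, which reduces the whole statement to trees of the special form ${\mathbf 0}_{p}\veebar_s{\mathbf 0}_{q}$. It then inducts on $p$: the case $p=1$ is Lemma \ref{lem:propfirststep}, and for $p>1$ the recursive formulas for Moebius elements (Propositions \ref{Moebiuisofcomb} and \ref{prop:Mtforanyt}) are used to rewrite ${\mathbf 0}_{p}\veebar_s{\mathbf 0}_{q}$ explicitly as a combination of an element of ${\mathcal I}_{p+q}^S$, terms handled by the induction, and a term of strictly smaller left factor. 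If you want to salvage your strategy, you would need to replace the cancellation claim by an honest spanning argument of this kind; as written, the proposal assumes its conclusion.
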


\begin{proof} For $n=1$ the result is immediate because $\vert \in {\mathcal I}_1^S$.

For $s\neq s_{\bullet}$,  the tree 
\begin{equation*} (\scalebox{0.05}{	\includegraphics[width=0.4\textwidth]{dib4.eps}
}, (s))= (\scalebox{0.05}{	\includegraphics[width=0.4\textwidth]{dib4.eps}
}, ({\hat{s}})) + ({\mathbf 0}_{2}, (s_{\bullet})),\end{equation*}
 and for $s=s_{\bullet}$, the tree $(\scalebox{0.05}{	\includegraphics[width=0.4\textwidth]{dib4.eps}
}, (s_{\bullet})) = ({\mathbf 0}_{2}, (s_{\bullet}))$. 
\medskip

For $n > 2$, assume that any element in $\K[ {\mbox{\bf PBT}_r^{S}}]$ satisfies the result, for $ r < n$. As any colored tree ${\underline t} = {\underline t}^l\veebar_{s}{\underline t}^r$, with $\vert {\underline t}^l\vert < \vert {\underline t}\vert $ and $\vert {\underline t}^r\vert < \vert {\underline t}\vert $, we have that ${\underline t}^l$ and ${\underline t}^r$ satisfy the proposition. 
\medskip

Moreover, as the subspace $\K[{\mathcal I}^S] :=\bigoplus_{n\geq 1} \K[{\mathcal I}_n^S]$ is closed under the composition operators $\circ _i$ and ${\underline{\circ}}$, it suffices to prove that the result is true for any tree of the form 
${\mathbf 0}_{p}\veebar_s{\mathbf 0}_{q}$, for integers $p, q\geq 0$.
\medskip

We proceed by induction on $p$. For $p = 1$, the tree $t =\vert\veebar_s{\mathbf 1}_{q}$ and the result is a straightforward consequence of Lemma \ref{lem:propfirststep}.

For $p > 1$, as $M_{{\bf 0}_{q}} = {\bf 0}_{q}$, Proposition \ref{prop:Mtforanyt} states that
\begin{equation*} M_{{\mathbf 1}_{p+1} \circ_{p+1}{\mathbf 0}_{q}} = M_{{\mathbf  1}_{p+1}}\circ _{p+1}{\bf 0}_{q}.\end{equation*} 

Propositions \ref{prop:Mtforanyt} and \ref{Moebiuisofcomb} imply that
\begin{align*} M_{{\mathbf 1}_{p+2}\circ_{p+1}{\mathbf 0}_{q-1}} &= \sum_{i=1}^{p+1}(-1)^{i-1} ( {\mathbf 0}_{ i}\veebar M_{{\mathbf 1}_{p+2-i}})\circ_{p+1} {\mathbf 0}_{q-1} =\\
\sum_{i=1}^{p-1}(-1)^{i-1} {\mathbf 0}_{ i}\veebar &(M_{{\mathbf 1}_{p+2-i}}\circ _{p+1-i} {\mathbf 0}_{q-1})  + (-1)^{p-1} {\mathbf 0}_{p}\veebar (M_{{\mathbf 1}_{2}}\circ _1 {\mathbf 0}_{q-1}) + (-1)^p ({\mathbf 0}_{p+1}\veebar \vert)\circ_{p+1}
{\mathbf 0}_{q-1}=\\
& \sum_{i=1}^{p-1}(-1)^{i-1} {\mathbf 0}_{i}\veebar (M_{{\mathbf 1}_{p+2-i}}\circ _{p+1-i} {\mathbf 0}_{q-1}) + (-1)^{p-1} {\mathbf 0}_{p}\veebar {\mathbf 0}_{q} + (-1)^p ({\mathbf 0}_{p}\veebar {\mathbf 0}_{q-1})\veebar \vert,\end{align*}
which implies that 
\begin{align*} & ({\mathbf 0}_{p}\veebar  {\mathbf 0}_{q}, (s_{\bullet},\dots, s_{\bullet},s,s_{\bullet},\dots ,s_{\bullet}))  =  ({\mathbf 0}_{p}, (s_{\bullet},\dots ,s_{\bullet}))\veebar_s ({\mathbf 0}_{q}, (s_{\bullet},\dots ,s_{\bullet}))=\\
&(-1)^{p-1}\bigl((M_{{\mathbf 1}_{p+2}\circ_{p+1}{\mathbf 0}_{q-1}}, (s_{\bullet},\dots, s_{\bullet},s,s_{\bullet},\dots ,s_{\bullet})) +\\
& \sum_{i=1}^{p-1}(-1)^{p+i-1} ({\mathbf 0}_{i}, (s_{\bullet},\dots ,s_{\bullet}))\veebar _{s_{\bullet}}((M_{{\mathbf 1}_{p+2-i}},(s_{\bullet},\dots ,s_{\bullet}, s, s_{\bullet}))\circ _{p+1-i} ({\mathbf 0}_{q-1}, (s_{\bullet},\dots ,s_{\bullet}))\bigr) +\\
& ({\mathbf 0}_{ p},(s_{\bullet},\dots ,s_{\bullet}))\veebar_s ({\mathbf 0}_{q-1}, (s_{\bullet},\dots ,s_{\bullet})))\veebar_{s_{\bullet}} \vert.\end{align*}

Note that \begin{enumerate}[(i)]
\item the element $(M_{{\mathbf 1}_{p+2}\circ_{p+1}{\mathbf 0}_{q-1}}, (s_{\bullet},\dots, s_{\bullet},s,s_{\bullet},\dots ,s_{\bullet}))$ belongs to ${\mathcal I}_{p+q}^S$,
\item as $\vert (M_{{\mathbf 1}_{p+2-i}},(s_{\bullet},\dots ,s_{\bullet}, s, s_{\bullet}))\circ _{p+1-i} ({\mathbf 0}_{q-1}, (s_{\bullet},\dots ,s_{\bullet}))\vert < n$, we assume that it satisfies the proposition. So, applying a recursive argument on $i$, we get that \begin{equation*} ({\mathbf 0}_{i}\veebar (M_{{\mathbf 1}_{p+2-i}}\circ _{p+1-i} {\mathbf 0}_{q-1}, (s_{\bullet},\dots ,s_{\bullet},s,s_{\bullet},\dots ,s_{\bullet})))\end{equation*}
is a linear combination of elements of type
\begin{equation*} ({\mathbf 0}_{r}, (s_{\bullet},\dots ,s_{\bullet})){\underline{\circ}} ({\underline t}_1,\dots ,{\underline t}_r),\end{equation*} for $1\leq i\leq p-1$.
\item As $ ({\mathbf 0}_{r}(s_{\bullet},\dots , s_{\bullet}))\veebar_s ({\mathbf 0}_{q-1}, (s_{\bullet},\dots ,s_{\bullet}))$ satisfies the proposition, the element
\begin{equation*} ({\mathbf 0}_{ p},(s_{\bullet},\dots ,s_{\bullet}))\veebar_s ({\mathbf 0}_{q-1}, (s_{\bullet},\dots ,s_{\bullet})))\veebar_{s_{\bullet}} \vert,\end{equation*} 
satisfies it, too. 
\end{enumerate}

\end{proof}
\medskip

\begin{theorem} \label{th:structure} For any set $S$, the subspace of primitive elements of the coalgebra $(\K[ {\mbox{\bf PBT}^{S}}], {\overset{\circledast}{\Delta}})$ is the vector space $\K[{\mathcal I}^S]$. Moreover, the coalgebra $(\K[ {\mbox{\bf PBT}^{S}}], {\overset{\circledast}{\Delta}})$ is isomorphic to the cotensor coalgebra $T^c(\K[{\mathcal I}^S])$.\end{theorem}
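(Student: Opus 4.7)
The plan is to prove both claims at once by constructing an explicit coalgebra isomorphism $\Phi: T^c(\K[{\mathcal I}^S]) \longrightarrow \K[\mbox{\bf PBT}^S]$. First I will establish the containment $\K[{\mathcal I}^S] \subseteq \mbox{Prim}(\K[\mbox{\bf PBT}^S])$ by induction on the recursive definition of ${\mathcal I}_n^S$. The generator $\vert$ is trivially primitive. For $\vert\veebar_{\hat s}\vert = \vert\veebar_s\vert - \vert\veebar_{s_{\bullet}}\vert$, the unital infinitesimal relation for each $\veebar_s$ produces the same reduced coproduct $\vert \otimes \vert$ on both summands, so their difference is primitive. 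For the inductive step, elements of the form $(M_t,(s_1,\ldots,s_{n-1}))$ with $t$ being $\rightthreetimes$-irreducible are primitive by Theorem \ref{th:formAS} combined with Lemma \ref{lem:extofcoprod} (since $\rightthreetimes$-irreducibility collapses the sum in Aguiar--Sottile's formula), and graftings of primitive elements remain primitive by Remark \ref{rem:competc}.

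Next, define
$$\Phi\bigl(v_1\otimes \cdots \otimes v_r\bigr) := \bigl({\mathbf 0}_r,(s_{\bullet},\ldots,s_{\bullet})\bigr)\,{\underline{\circ}}\,(v_1,\ldots,v_r)$$
for $v_j \in \K[{\mathcal I}_{n_j}^S]$, extended linearly with conventions $\Phi(1_\K) = 1_\K$ and $\Phi(v) = v$ on the degree-one piece. Surjectivity of $\Phi$ is precisely Proposition \ref{prop:structtheo}. To verify that $\Phi$ is a coalgebra morphism I will invoke Corollary \ref{cor:delteofgraft} with $t = {\mathbf 0}_r$ and $w_j = v_j$: since each $v_j$ is primitive, $\overset{\circledast}{\Delta}{}^{red}(v_j) = 0$ so the entire second sum in that formula vanishes. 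A direct reading of Remark \ref{rem:descriptcoprod} on the left comb yields $\overset{\circledast}{\Delta}({\mathbf 0}_r) = \sum_{i=0}^r {\mathbf 0}_i\otimes {\mathbf 0}_{r-i}$ (with ${\mathbf 0}_0 := 1_{\K}$ and ${\mathbf 0}_1 := \vert$), and the remaining first sum collapses to
$$\sum_{i=0}^{r}\Phi(v_1\otimes \cdots \otimes v_i) \otimes \Phi(v_{i+1}\otimes \cdots \otimes v_r),$$
which is exactly $(\Phi\otimes \Phi)$ applied to the deconcatenation coproduct on $T^c(\K[{\mathcal I}^S])$.

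Finally, for injectivity: the coalgebra $T^c(\K[{\mathcal I}^S])$ is conilpotent with primitive subspace $\K[{\mathcal I}^S]$ sitting in tensor-degree one, and $\Phi$ restricted to $\K[{\mathcal I}^S]$ is the inclusion into $\K[\mbox{\bf PBT}^S]$, hence injective. A standard conilpotence argument finishes the job: if $\Phi(x)=0$ for some nonzero $x$ in minimal coradical filtration level $n$, then $(\Phi\otimes \Phi)\bigl(\Delta^{red}_{T^c}(x)\bigr) = \overset{\circledast}{\Delta}{}^{red}(\Phi(x)) = 0$ forces $\Delta^{red}_{T^c}(x) = 0$ by induction on $n$, making $x$ primitive and thus $x = 0$. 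Combined with surjectivity, $\Phi$ is a coalgebra isomorphism, and comparing primitive subspaces on both sides yields $\mbox{Prim}(\K[\mbox{\bf PBT}^S]) = \K[{\mathcal I}^S]$, proving both assertions simultaneously.

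The main obstacle is the coalgebra-morphism verification: one must track carefully that for $t = {\mathbf 0}_r$ and primitive grafted inputs the somewhat intricate formula of Corollary \ref{cor:delteofgraft}, which mixes $\overset{\circledast}{\Delta}$ and $\delta$ on $t$, collapses to the clean deconcatenation formula. The combinatorial heart is the identification $\overset{\circledast}{\Delta}({\mathbf 0}_r) = \sum_{i=0}^r {\mathbf 0}_i\otimes {\mathbf 0}_{r-i}$, which makes the grafting and tensor bookkeeping line up.
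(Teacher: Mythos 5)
Your proposal is correct, and it reaches the conclusion by a genuinely different route than the paper. The paper first proves the equality $\K[{\mathcal I}^S]={\mbox{Prim}(\K[{\mbox{\bf PBT}^{S}}])}$ by a dimension count: it combines the containment $\K[{\mathcal I}^S]\subseteq {\mbox{Prim}}$ with the spanning statement of Proposition \ref{prop:structtheo} and with Theorem \ref{teo:estmagmatic} (the structure theorem of Holtkamp--Loday--Ronco, which already identifies $\K[{\mbox{\bf PBT}^{S}}]$ with $T^c({\mbox{Prim}})$), comparing graded dimensions for finite $S$ and passing to arbitrary $S$ by a colimit argument; only afterwards does it verify, by recursion on $r$, the deconcatenation formula for ${\overset{\circledast}{\Delta}}$ on elements $({\mathbf 0}_{r},(s_{\bullet},\dots ,s_{\bullet})){\underline{\circ}}({\underline t}_1,\dots ,{\underline t}_r)$. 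You instead build the explicit map $\Phi$ and prove directly that it is a coalgebra isomorphism: surjectivity is again Proposition \ref{prop:structtheo}, the morphism property is your collapse of Corollary \ref{cor:delteofgraft} (which is the same computation the paper does by hand, since ${\overset{\circledast}{\Delta}}({\mathbf 0}_r)=\sum_{i}{\mathbf 0}_i\otimes {\mathbf 0}_{r-i}$ and primitivity of the inputs kills the $\delta$-sum), and injectivity comes from the standard conilpotence argument rather than from counting dimensions. What your route buys is uniformity and self-containment: it needs neither the reduction to finite $S$ and the colimit step, nor Theorem \ref{teo:estmagmatic}, and it identifies ${\mbox{Prim}(\K[{\mbox{\bf PBT}^{S}}])}$ at the end simply because primitives of $T^c(V)$ are $V$. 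What the paper's route buys is brevity given that the structure theorem is already in hand. Both arguments rest on the same two pillars (Proposition \ref{prop:structtheo} and the coproduct computation on graftings along left combs), so the combinatorial content is identical.
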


\begin{proof} Assume first that $S$ is finite. We know that $\K[{\mathcal I}^S]\subseteq {\mbox{Prim}(\K[ {\mbox{\bf PBT}^{S}}])}$.
Proposition \ref{prop:structtheo} states that, as a vector space, $\K[ {\mbox{\bf PBT}^{S}}]$ is isomorphic to the tensor vector space $T(\K[{\mathcal I}^S])$. On the other hand, Theorem \ref{teo:estmagmatic} implies that $\K[ {\mbox{\bf PBT}^{S}}]$ is isomorphic to the cotensor coalgebra over the subspace ${\mbox{Prim}(\K[ {\mbox{\bf PBT}^{S}}])}$.

So, as the subspaces of homogeneous elements $\K[{\mathcal I}_n^S]$ and ${\mbox{Prim}(\K[ {\mbox{\bf PBT}^{S}}])_n}$ are both finite dimensional and Proposition \ref{prop:structtheo} implies that they have the same dimension, for $n\geq 1$.  So, we conclude that they are equal.
\medskip

Any set $S$ is the colimit of all its finite subsets, and our constructions commute with colimits, which implies that the vector space ${\mbox{Prim}(\K[ {\mbox{\bf PBT}^{S}}])}$ and 
$\K[{\mathcal I}^S]$ are equal for any set $S$.
\medskip

For the last result, it suffices to note that, if ${\underline t}_1,\dots ,{\underline t}_r$ are primitive elements of $\K[ {\mbox{\bf PBT}^{S}}]$, then 
\begin{enumerate}[(i)]\item 
\begin{align*}  {\overset{\circledast}{\Delta}}(({\mathbf 0}_{2}, (s_{\bullet})){\underline{\circ}} ({\underline t}_1,{\underline t}_2)) &=  {\overset{\circledast}{\Delta}}({\underline t}_1\veebar_{s_{\bullet}}{\underline t}_2) =\\
& ({\underline t}_1\veebar_{s_{\bullet}} {\underline t}_2)\otimes 1_{\K} + {\underline t}_1\otimes {\underline t}_2 + 1_{\K}\otimes ({\underline t}_1\veebar_{s_{\bullet}} {\underline t}_2),\end{align*}

\item \begin{align*} &{\overset{\circledast}{\Delta}}(({\mathbf 0}_{r}, (s_{\bullet},\dots ,s_{\bullet})){\underline{\circ}} ({\underline t}_1,\dots ,{\underline t}_r)) =\\
& {\overset{\circledast}{\Delta}}(({\mathbf 0}_{r-1}, (s_{\bullet},\dots ,s_{\bullet})){\underline{\circ}} ({\underline t}_1,\dots ,{\underline t}_{r-1}))\ \veebar_{s_{\bullet}}\ {\underline t}_r=\\
&(({\mathbf 0}_{r}, (s_{\bullet},\dots ,s_{\bullet})){\underline{\circ}} ({\underline t}_1,\dots ,{\underline t}_r)) \otimes 1_{\K} +  {\overset{\circledast}{\Delta}}( (({\mathbf 0}_{r-1}, (s_{\bullet},\dots ,s_{\bullet})){\underline{\circ}} ({\underline t}_1,\dots ,{\underline t}_{r-1})))\ \veebar_{s_{\bullet}}\ {\underline t}_r,\end{align*}
for any $s_{\bullet}\in S$, where $(x\otimes y) \veebar_{s_{\bullet}} z := x\otimes (y\veebar_{s_{\bullet}} z)$ for any elements $x, y, z\in \K[ {\mbox{\bf PBT}^{S}}]$.\end{enumerate}

Therefore, by recursion on $r$, we get
\begin{align*}  {\overset{\circledast}{\Delta}}(({\mathbf 0}_{r}, (s_{\bullet},\dots ,s_{\bullet})){\underline{\circ}} &({\underline t}_1,\dots ,{\underline t}_{r})) =\\
&\sum_{i=0}^r
(({\mathbf 0}_{i}, (s_{\bullet},\dots ,s_{\bullet}))\circ ({\underline t}_1,\dots ,{\underline t}_{i}))\ \otimes\ (({\mathbf 0}_{r-i}, (s_{\bullet},\dots ,s_{\bullet})){\underline{\circ}} ({\underline t}_{i+1},\dots ,{\underline t}_{r})),\end{align*}
which implies that, as a coalgebra, $\K[ {\mbox{\bf PBT}^{S}}]$ is isomorphic to the cotensor coalgebra over the space $\K[{\mathcal I}^S]$.\end{proof}

\begin{corollary} \label{cor:struc} Given a vector space $V$, the subspace of primitive elements of the underlying coalgebra of ${\mbox{{\it S}-Mag}_{\K}(V)}$ is the vector space $\bigoplus_{n\geq 1} \K[{\mathcal I}_n^S]\otimes V^{\otimes n}$.\end{corollary}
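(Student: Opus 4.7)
The plan is to lift Theorem~\ref{th:structure} from the one-generator case to the $V$-decorated case by exploiting the fact that the coproduct $\overset{\circledast}{\Delta}$ on $\mbox{{\it S}-Mag}_{\K}(V)$ respects the tensor-power grading in $V$ and acts on the $V^{\ot n}$ factor merely as deconcatenation.

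First I would dispose of the inclusion $\bigoplus_{n\geq 1}\K[\mathcal{I}_n^S]\ot V^{\ot n}\subseteq{\mbox{Prim}(\mbox{{\it S}-Mag}_{\K}(V))}$. This is a direct consequence of the explicit formula
\begin{equation*}
\overset{\circledast}{\Delta}\bigl(({\underline t},(s_1,\dots,s_{n-1}))\ot(v_1\ot\cdots\ot v_n)\bigr)=\sum_{i=0}^{n}\bigl(({\underline t}_{(1)}^i,\ldots)\ot(v_1\ot\cdots\ot v_i)\bigr)\ot\bigl(({\underline t}_{(2)}^i,\ldots)\ot(v_{i+1}\ot\cdots\ot v_n)\bigr)
\end{equation*}
together with Remark~\ref{rem:competc}: decorating the leaves of a primitive tree never introduces new non-trivial summands, so generators of $\K[\mathcal{I}_n^S]$ tensored with any $v_1\ot\cdots\ot v_n$ remain primitive; closure of $\bigoplus_n\K[\mathcal{I}_n^S]\ot V^{\ot n}$ under linear combinations is automatic.

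For the reverse inclusion, I would fix a basis $\{e_\alpha\}_{\alpha\in A}$ of $V$ and, for each degree $n$, identify $\K[\mbox{\bf PBT}_n^S]\ot V^{\ot n}$ with $\bigoplus_{\vec{\alpha}\in A^n}\K[\mbox{\bf PBT}_n^S]\ot e_{\vec{\alpha}}$. The crucial observation is that the coproduct splits the factor $e_{\vec{\alpha}}=e_{\alpha_1}\ot\cdots\ot e_{\alpha_n}$ only in the unique way $(e_{\alpha_1}\ot\cdots\ot e_{\alpha_i})\ot(e_{\alpha_{i+1}}\ot\cdots\ot e_{\alpha_n})$, and the resulting decorated tensors remain linearly independent across different $\vec\alpha$ and different cut positions $i$. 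Consequently, if $y=\sum_{\vec\alpha}x_{\vec\alpha}\ot e_{\vec\alpha}$ with $x_{\vec\alpha}\in\K[\mbox{\bf PBT}_n^S]$ is primitive in $\mbox{{\it S}-Mag}_{\K}(V)$, then each leaf-graded component $\pi_i(x_{\vec\alpha})\in\K[\mbox{\bf PBT}_i^S]\ot\K[\mbox{\bf PBT}_{n-i}^S]$ of the tree-coproduct must vanish for every $\vec\alpha$ and every $1\leq i\leq n-1$. In other words, each $x_{\vec\alpha}$ is primitive in $\K[\mbox{\bf PBT}^S]$.

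Applying Theorem~\ref{th:structure} we get $x_{\vec\alpha}\in\K[\mathcal{I}_n^S]$ for every $\vec\alpha$, and hence $y\in\K[\mathcal{I}_n^S]\ot V^{\ot n}$. Summing over $n$ yields the desired equality. The one mildly delicate point to check is that cutting the tensor $v_1\ot\cdots\ot v_n$ at position $i$ is genuinely the restriction of a free deconcatenation (so no algebraic relations in $V$ can conspire to make a non-primitive $x_{\vec\alpha}$ contribute to a primitive $y$); but this is exactly what the definition of $\overset{\circledast}{\Delta}$ on $\mbox{{\it S}-Mag}_{\K}(V)$ says, after choosing a basis of $V$. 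No further work is needed: everything else is a direct translation of the one-generator case via the tensor splitting.
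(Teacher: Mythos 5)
Your proof is correct, but it takes a different route from the paper's. The paper handles the reverse inclusion by a counting argument: for finite-dimensional $V$ it invokes Proposition \ref{prop:structtheo} to see that $\bigoplus_n \K[{\mathcal I}_n^S]\otimes V^{\otimes n}$ and ${\mbox{Prim}}({\mbox{{\it S}-Mag}_{\K}(V)})$ have the same (finite) dimension in each degree, so the already-established containment forces equality, and it then passes to arbitrary $V$ by a colimit argument. You instead choose a basis $\{e_\alpha\}$ of $V$, write a homogeneous element as $y=\sum_{\vec\alpha}x_{\vec\alpha}\otimes e_{\vec\alpha}$, and observe that since the coproduct cuts the tree at leaf position $i$ exactly in step with the deconcatenation of $e_{\vec\alpha}$ at position $i$, and since the split basis tensors are linearly independent across distinct $\vec\alpha$ and the bidegrees $(i,n-i)$ are independent, primitivity of $y$ forces each $x_{\vec\alpha}$ to be primitive in $\K[{\mbox{\bf PBT}^{S}}]$; Theorem \ref{th:structure} then puts each $x_{\vec\alpha}$ in $\K[{\mathcal I}_n^S]$. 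Your argument is sound: the key linear-independence claim holds because a pair $(e_{\alpha_1}\otimes\cdots\otimes e_{\alpha_i},\,e_{\alpha_{i+1}}\otimes\cdots\otimes e_{\alpha_n})$ determines $\vec\alpha$, so the $\vec\alpha$-components of the bidegree-$(i,n-i)$ part of $\overset{\circledast}{\Delta}^{red}(y)$ must vanish separately. What your approach buys is uniformity — it treats all $V$ at once, with no finite-dimensional reduction, no dimension count, and no appeal to the colimit argument or to Proposition \ref{prop:structtheo} beyond its use inside Theorem \ref{th:structure}; what it costs is the (harmless) choice of a basis of $V$, which the paper's argument avoids.
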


\begin{proof} As any element $v\in V$ is primitive, we get that the subspace $\bigoplus_{n\geq 1} \K[{\mathcal I}_n^S]\otimes V^{\otimes n}$ is contained in ${\mbox{Prim} {\mbox{{\it S}-Mag}_{\K}(V)}}$. But, if $V$ is finite dimensional, applying Proposition \ref{prop:structtheo} we get that the subspaces of homogeneous elements of degree $n$ of both subspaces have the same dimension, for $n\geq 1$, which proves the result in this case.

As any vector space is the colimit of its subspaces of finite dimension and our constructions commute with colimits, the result is true for any vector space $V$.\end{proof}

\begin{remark} \label{rem:relprim} The $S$-magmatic algebra $\K[ {\mbox{\bf PBT}^{S}}]$ acts on any $S$-magmatic algebra $(A, \{\cdot_s\}_{s\in S})$ by\begin{enumerate}[(a)]
\item $\scalebox{0.05}{	\includegraphics[width=0.4\textwidth]{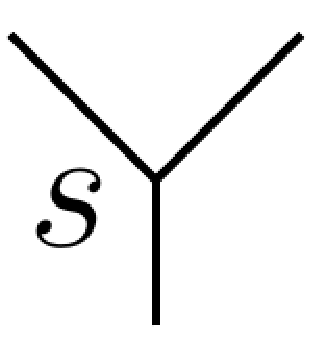}
} \cdot (x_1, x_2) = (\scalebox{0.05}{	\includegraphics[width=0.4\textwidth]{dib4.eps}
}, s)\cdot (x_1, x_2):= x_1\cdot_{s} x_2$,
\item $({\underline t}^l\veebar_s {\underline t}^r)\cdot (x_1,\dots ,x_n) := ({\underline t}^l \cdot (x_1,\dots ,x_r))\cdot_s ({\underline t}^r \cdot (x_{r+1},\dots ,x_n))$, for ${\underline t}^l = (t^l, (s_1, \dots ,s_{r-1}))\in \K[{\mathcal I}_r^S]$, ${\underline t}^r = (t^r, (s_{r+1}, \dots ,s_{n-1})\in \K[{\mathcal I}_{n-r}^S]$,
\end{enumerate}
for any elements $s\in S$ and $x_1,\dots ,x_n\in A$.

For any $s_{\bullet}\in S$ fixed, the elements $\scalebox{0.05}{\includegraphics[width=0.5\textwidth]{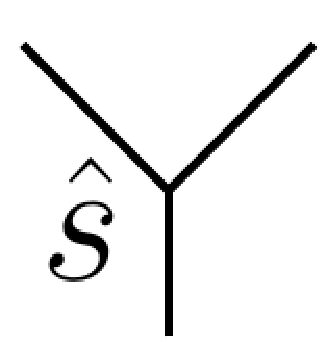}
}$ and $(M_t, (s_1, \dots ,s_{n-1}))$ define $\K$-linear endomorphisms of $A$, for any $t\in  {\mbox{\bf PBT}_n}$ and any $s, s_1,\dots , s_{n-1}\in S$.
\medskip

If $(H, \{\cdot_s\}_{s\in S}, \Delta )$ is a unital $S$-magmatic infinitesimal bialgebra and $s_{\bullet}\in S$ is a fixed element, the elements \begin{enumerate}[(i)]
\item $\scalebox{0.05}{\includegraphics[width=0.5\textwidth]{dib10.eps}
}$, for $s\in S\setminus \{s_{\bullet}\}$,
\item $(M_{t}, (s_1,\dots , s_{n-1}))$, for $t\in  {\mbox{$\rightthreetimes$-Irr}_n}$ and $s_1,\dots ,s_{n-1}\in S$, with $n\geq 2$\end{enumerate} 
define operations on the subspace ${\mbox{Prim}(H)}$ of primitive elements of $H$.
\end{remark}

\begin{notation} \label{not:ops} Let $A$ be an $S$-magmatic algebra, and let $s_{\bullet}\in S$ be a fixed element. For any elements $s\neq s_{\bullet}\in S$ and a colored tree 
${\underline t} = (t, (s_1, \dots ,s_{n-1}))\in {\mbox{\bf PBT}^{S}}$. we denote by\begin{enumerate}[(i)]
\item $M(s):= \cdot_s - \cdot _{s_{\bullet}}$ the binary product defined by the action of the tree $\scalebox{0.05}{\includegraphics[width=0.5\textwidth]{dib10.eps}
}  $. 
\item $M_t(s_1,\dots ,s_{n-1})$ the $n$-ary product given by the action of the element $(M_t, (s_1,\dots ,s_{n-1}))$ on $A$, where $t$ is $\rightthreetimes$-irreducible with $n$ leaves.\end{enumerate}\end{notation}
\medskip

When we look at the elements in ${\mbox{\bf PBT}_n^{S}}$ as $n$-ary operations in any $S$-magmatic infinitesimal algebra, some of them may be obtained from others by composition. Note for instance that the composition of these operations correspond to the products $\circ_i$ and $\underline{\circ}$ described in Definition \ref{def:wedge}.
\medskip 

\begin{remark} \label{rem:formaelem} Let $S$ be a set and $s_{\bullet}\in S$ a fixed element. For any collection $(s_1,\dots ,s_{n-1})\in S^{\times (n-1)}$ such that $s_i\neq s_{\bullet}$, for $1\leq i\leq n-1$, and for any tree $t\in {\mbox{\bf PBT}_n}$ the $n$-ary product $M_t(\hat{s}_1,\dots ,\hat{s}_{n-1})$ is a linear combination of compositions of the binary products $M({s}_1),\dots , M({s}_{n-1})$. Furthermore, we have that
\begin{equation*}M_t(s_1,\dots ,s_r) = M_t(\hat{s}_1,\dots ,\hat{s}_{n-1}) +\sum _{\emptyset \subsetneq J\subseteq [n-1]} M_t(w_1,\dots ,w_{n-1}),\end{equation*}
where $w_i := \begin{cases} s_i,&{\rm for}\ i\notin J\\ s_{\bullet},&{\rm for}\ i\in J.\end{cases}$\end{remark}

\begin{lemma} \label{lem:generatingprods} Let $S$ be a set and $s_{\bullet}\in S$ be a fixed element of $S$, and let $(A , \{\cdot_s\}_{s\in S} )$ be a unital $S$-magmatic infinitesimal algebra.  Let $t = {\mathbf 1}_k \underline{\circ} (t_1,\dots , t_{k-1},\vert)$ be a tree in 
$ {\mbox{\bf PBT}_n}$, with $\vert t_i\vert = m_i$ for $1\leq i < k$. The products $M(s)$ and $M_t(s_1,\dots ,s_{n-1})$ satisfy the following relations\begin{enumerate}[(a)]
\item For any family of elements $s_1,\dots ,s_{n-1}\in S$ and any $1\leq i<k$ , 
\begin{align*} &M_t(s_1,\dots ,s_{n-1}) =\\
 &M_{{\mathbf 1}_k\underline{\circ}(t_1,\dots ,t_{i-1}, \vert , t_{i+1},\dots, t_{k-1},\vert)}(s_1,\dots ,s_{l},s_{l+m_i +1}, \dots ,s_{n -1})\circ ({\mbox{Id}^{\otimes l}}\otimes M_{t_i}(s_{l+1},\dots ,s_{l+m_i-1}) \otimes {\mbox{Id}^{\otimes(n-l-m_i)}}),\end{align*}
 where $l := m_1+\dots +m_{i-1}$. Moreover, we have that
\begin{equation*} M_t(s_1,\dots ,s_{n-1}) =( M_{{\mathbf 1}_k} (s_{m_1},s_{m_1+m_2},\dots ,s_{m_1+\dots + m_{k-2}},\dots , s_{n-1})\circ N_1\circ \dots \circ N_{k-1},\end{equation*}
where $N_i := {\mbox{Id}^{\otimes l_i}}\otimes M_{t_i}(s_{l_i+1},\dots ,s_{l_{i+1}-1})\otimes {\mbox{Id}^{\otimes (n-l_{i+1})}}$, for $l_i := m_1 + \dots + m_{i-1}$ and $1\leq i < k$. 

\item Let $w = w_1\rightthreetimes w_2$, with $w_1\in {\mbox{$\rightthreetimes$-Irr}}$ and $\vert w_2\vert \geq 1$. For any $1\leq i < k$, we have that  
\begin{align*} &M_{{\mathbf 1}_k\underline{\circ} (t_1,\dots ,t_{i-1}, w, t_{i+1},\dots ,t_{k-1},\vert)}(s_1,\dots ,s_{n-1}) =\\
& M_{{\mathbf 1}_k\underline{\circ} (t_1,\dots ,t_{i-1}, \vert\rightthreetimes w_2, t_{i+1},\dots ,t_{k-1},\vert)}(s_1,\dots ,s_{l+1}, s_{l+\vert w_1\vert-1},\dots  ,s_{n-1})
\circ_i M_{w_1}(s_{l+1},\dots ,s_{l+\vert w_1\vert -1}),\end{align*}
where $l := m_1+ \dots + m_{i-1}$, for any family of elements $s_1,\dots ,s_{n-1}\in S$.\end{enumerate}\end{lemma}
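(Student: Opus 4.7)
The plan is to deduce both parts from the tree-level Moebius identities of Section~2 by translating them via Remark \ref{rem:relprim}, which identifies grafting $\underline{\circ}$ of colored trees with operadic substitution of the associated multilinear maps. Recall in particular that the product $\rightthreetimes$ is the specific grafting $x\rightthreetimes y = y\circ_1(x\veebar\vert)$.

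For part (a), I would begin from Proposition \ref{prop:Mtforanyt}, which gives the tree-level identity
\[ M_t \;=\; M_{\mathbf{1}_k}\,\underline{\circ}\,(M_{t_1},\dots,M_{t_{k-1}},\vert). \]
Equipping each Moebius element with its color subword, the left-to-right reading of the $n-1$ internal nodes of $t$ splits $(s_1,\dots,s_{n-1})$ into the blocks internal to $t_1,\dots,t_{k-1}$ together with the $k-1$ colors living on the spine of $\mathbf{1}_k$. Interpreting grafting as operadic substitution immediately yields the second displayed equation of (a): each $N_i$ is precisely the substitution of $M_{t_i}(\ldots)$ into the $i$-th slot while leaving the other inputs untouched, whence the tensor product with identities on either side. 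The first equation is just the ``one-slot-at-a-time'' version, obtained by applying Proposition \ref{prop:Mtforanyt} to the intermediate tree in which only $t_i$ has been shrunk to $\vert$ and then plugging $M_{t_i}$ into that slot; the residual color $s_{l+m_i}$ is the one sitting on the internal node of $\mathbf{1}_k$ between the color blocks of $t_{i-1}$ and $t_{i+1}$ after the shrinking.

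For part (b), write $w=w_1\rightthreetimes w_2$ with $w_1$ $\rightthreetimes$-irreducible. Lemma \ref{lem:Moebirred} gives $M_w = M_{w_1}\rightthreetimes M_{w_2}$, and unfolding the definition of $\rightthreetimes$ twice together with associativity of grafting yields
\[ M_w \;=\; M_{w_2}\circ_1(M_{w_1}\veebar\vert) \;=\; \bigl(\vert\rightthreetimes M_{w_2}\bigr)\circ_1 M_{w_1} \;=\; M_{\vert\rightthreetimes w_2}\circ_1 M_{w_1}, \]
where the last equality invokes Lemma \ref{lem:Moebirred} once more, now applied to the irreducible factorization $\vert\rightthreetimes w_2$. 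Substituting this identity into the outer grafting $\mathbf{1}_k\underline{\circ}(t_1,\dots,t_{i-1},w,t_{i+1},\dots,\vert)$, expanded via Proposition \ref{prop:Mtforanyt}, and pulling the inner $\circ_1 M_{w_1}$ through the outer grafting (again just associativity of tree composition, exactly as in part (a)), produces the required factorization; the operational translation of Remark \ref{rem:relprim} then delivers the claimed composite $M_{\mathbf{1}_k\underline{\circ}(\dots,\vert\rightthreetimes w_2,\dots)}\circ_i M_{w_1}$.

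The main technical obstacle in both parts is color-index bookkeeping: one must verify that deleting the internal color block of $t_i$ (respectively of $w_1$) from $(s_1,\dots,s_{n-1})$ yields precisely the color sequence of the shrunk tree in the planar left-to-right order. In particular, the somewhat unusual indices $s_{l+1}$ and $s_{l+|w_1|-1}$ appearing in (b) reflect that the irreducibility of $w_1$ forces a surviving color on the node of $\vert\rightthreetimes w_2$ directly above the slot where $M_{w_1}$ is plugged in. Once this indexing is pinned down, both equalities are formal consequences of Proposition \ref{prop:Mtforanyt}, Lemma \ref{lem:Moebirred}, and the substitution interpretation of Remark \ref{rem:relprim}, with no further computation required.
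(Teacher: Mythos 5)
Your proposal is correct and follows essentially the same route as the paper: part (a) is deduced from Proposition \ref{prop:Mtforanyt} by reading the grafting $M_t = M_{{\mathbf 1}_k}\,{\underline{\circ}}\,(M_{t_1},\dots ,M_{t_{k-1}},\vert)$ as operadic substitution, and part (b) uses the identical chain $M_{w_1\rightthreetimes w_2} = M_{w_1}\rightthreetimes M_{w_2} = M_{w_2}\circ_1(M_{w_1}\veebar\vert) = (\vert\rightthreetimes M_{w_2})\circ_1 M_{w_1}$ from Lemma \ref{lem:Moebirred} before substituting into the outer grafting. Your added attention to the color-index bookkeeping only makes explicit what the paper leaves implicit.
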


\begin{proof} Point $(a)$ is a straightforward consequence of Proposition \ref{prop:Mtforanyt}. For $(b)$, Lemma \ref{lem:Moebirred} implies that:
\begin{equation*} M_{w_1\rightthreetimes w_2} = M_{w_1}\rightthreetimes M_{w_2} =  M_{w_2}\circ_1 (M_{w_1}\veebar \vert ) = (\vert \rightthreetimes M_{w_2})\circ _1 M_{w_1}.\end{equation*}

The result follows from the previous paragraph and point $(a)$.
\end{proof}

Consider the set  $\{M_t(s_1,\dots, s_{n-1})\mid t\in {\mbox{$\rightthreetimes$-Irr}_n}\ {\rm and}\ s_1,\dots, s_{n-1}\in S\}\cup \{M(s): s\neq s_{\bullet}\in S  \}$ of operations defined on any $S$-magmatic algebra. Using Lemma \ref{rem:formaelem} we eliminate all the operations which are compositions of other products of the same set. Nevertheless, we are not able to state that this set is minimal because we do not know if the relations described are exhaustive. 
\medskip

 Let $S$ be a set and $s_{\bullet}\in S$ be a fixed element of $S$, and let $(H , \{\cdot_s\}_{s\in S}, \Delta )$ be a unital $S$-magmatic infinitesimal bialgebra. Let $\K[\hat{\mathbf I}]$ be subspace of $\oplus_{n\geq 1}{\mbox{Hom}_{\K}(H^{\otimes n}, H)}$ generated by the compositions of the following operations\begin{enumerate}[(a)]
\item the binary products $M(s)$, with $s\in S$ and $s\neq s_{\bullet}$,
\item the $n$-ary products $M_{{\mathbf 1}_n}(s_1,\dots ,s_{n-1})$, $n\geq 2$, for any family of elements $s_1,\dots, s_n$ satisfying that there exists at least one integer $1\leq i < n$ such that $s_i = s_{\bullet}$,
\item for $n\geq 3$, the $n$-ary products $M_t(s_1,\dots ,s_{n-1})$, where $t= {\mathbf 1}_k\underline{\circ} (\vert, t_2,\dots ,t_{k-1}, \vert)$ satisfying that\begin{enumerate}[(i)]
\item  either $t_j = \vert$, or $t_j= \vert \rightthreetimes t_j\rq\in {\mbox{\bf PBT}_{m_j}}$ with $\vert t_j\rq \vert = m_j-1\geq 1$, for $2\leq j\leq k-1$, 
\item $\vert t_j\rq\vert >1$ for at least one integer $2\leq j < k$,
\item the colors of the nodes are given by
\begin{equation*}(u_1, s_{\bullet},r_2^2,\dots ,r_{m_2-2}^2,u_2,s_{\bullet}, r_2^3,\dots , r_{m_2-2}^3, u_2,\dots ,u_{k-2}, s_{\bullet}, r_2^{k-1},\dots ,r_{m_{k-1}-2}^{k-1}, u_{k-1}),\end{equation*}
where the nodes of ${\mathbf 1}_k$ are colored from left to right by the elements $u_1,\dots ,u_k$ in $S$ and the colored tree $(t_j, (s_{\bullet}, r_2^j,\dots ,r_{m_j-1}^j) )= (t_j\rq, (r_2^j,\dots ,r_{m_j-1}^j))\circ_1 ( \scalebox{0.5}{\includegraphics[width=0.04\textwidth]{dib4.eps}}, (s_{\bullet}))$.
 \end{enumerate}
\end{enumerate}
\medskip

\begin{proposition} \label{prop:genofprim} The subspace $\K[\hat{\mathbf I}]$ of $\bigoplus_{n\geq 1}{\mbox{Hom}_{\K}(H^{\otimes n}, H)}$ coincides with the subspace linearly generated by the set of operations $\{M_t(s_1,\dots, s_{n-1})\mid t\in {\mbox{$\rightthreetimes$-Irr}_n}\ {\rm and}\ s_1,\dots, s_{n-1}\in S\}\cup \{M(s): s\neq s_{\bullet}\in S  \}$.\end{proposition}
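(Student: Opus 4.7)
My plan is to prove the two inclusions separately. Write $V$ for the subspace spanned by $\{M_t(s_1,\dots,s_{n-1}) : t \in \rightthreetimes\text{-Irr}_n,\ s_i \in S\} \cup \{M(s) : s \neq s_\bullet\}$; I will show that every generator of $\K[\hat{\mathbf I}]$ lies in $V$, and conversely that every generator of $V$ can be written as a composition of the operations listed in (a), (b), (c).

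For the inclusion $\K[\hat{\mathbf I}] \subseteq V$, it suffices to verify that the generators of types (a), (b), (c) are operations of the form asserted. Type (a) is directly the list $\{M(s)\}$. For type (b), using $\mathbf{1}_n = \vert \veebar \mathbf{1}_{n-1}$ with $\vert$ being $\rightthreetimes$-irreducible and $|\mathbf{1}_{n-1}| = n-1 > 1$ (for $n \geq 3$), Remark \ref{rem:irrele} gives that $\mathbf{1}_n \in \rightthreetimes\text{-Irr}_n$. For type (c), the tree $t = \mathbf{1}_k \underline{\circ}(\vert, t_2,\dots,t_{k-1}, \vert) = \vert \veebar (\mathbf{1}_{k-1}\underline{\circ}(t_2,\dots,t_{k-1},\vert))$ has left subtree $\vert$ (irreducible) and right subtree of size $\geq 2$, so again it is $\rightthreetimes$-irreducible by the same criterion, and the colored operation lies in $V$.

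For the inclusion $V \subseteq \K[\hat{\mathbf I}]$, I argue by induction on the number of leaves $n$ of a $\rightthreetimes$-irreducible tree $t$ that $M_t(s_1,\dots,s_{n-1})$ lies in $\K[\hat{\mathbf I}]$. The base case $n = 1$ (where $t = \vert$ is the identity) is immediate, and $n = 2$ does not occur since $\rightthreetimes\text{-Irr}_2 = \emptyset$. For $n \geq 3$, write $t = \mathbf{1}_r \underline{\circ}(t_1, t_2, \dots, t_{r-1}, \vert)$ as in Notation \ref{notn:elementsinPB}, with $t_1$ irreducible since $t$ is. If $t = \mathbf{1}_n$, Remark \ref{rem:formaelem} decomposes
\begin{equation*}
M_{\mathbf{1}_n}(s_1,\dots,s_{n-1}) = M_{\mathbf{1}_n}(\hat s_1, \dots, \hat s_{n-1}) + \sum_{\emptyset \subsetneq J \subseteq [n-1]} M_{\mathbf{1}_n}(w_1,\dots,w_{n-1}),
\end{equation*}
where each summand with $J \neq \emptyset$ has some $w_i = s_\bullet$ and so lies in (b), while the hatted summand is, by Remark \ref{rem:formaelem}, a linear combination of compositions of the binary operations $M(s_i)$ from (a). Otherwise $t \neq \mathbf{1}_n$, and we apply Lemma \ref{lem:generatingprods}(a) to factor $M_t$ as $M_{\mathbf{1}_r}(\dots) \circ N_1 \circ \dots \circ N_{r-1}$ with each $N_i$ carrying an inner $M_{t_i}$. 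The outer $M_{\mathbf{1}_r}$ is handled by the previous case, and each $M_{t_i}$ with $t_i$ irreducible is handled by the inductive hypothesis. If some $t_i = u_i \rightthreetimes v_i$ is reducible (with $u_i$ irreducible and $|v_i|\geq 1$), Lemma \ref{lem:generatingprods}(b) pulls out an inductively-handled $M_{u_i}$ and replaces $t_i$ by $\vert \rightthreetimes v_i$; iterating on $v_i$ brings the inner pieces to the shape $\vert$ or $\vert\rightthreetimes t_j'$ required by (c).

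The principal obstacle is the final bookkeeping step: after all uses of Lemma \ref{lem:generatingprods}(a) and (b) the outer surviving operation is indexed by a tree of the shape $\mathbf{1}_k \underline{\circ}(\vert, t_2, \dots, t_{k-1}, \vert)$ matching (c) structurally, but the \emph{colors} of the designated internal nodes (the lowest node of each branch $\vert \rightthreetimes t_j'$) need to be forced to equal $s_\bullet$. This is done by a further application of Remark \ref{rem:formaelem} at exactly those color slots, splitting $M_t(\dots)$ into a summand where each designated color is $s_\bullet$ (which fits (c) verbatim) and summands involving $\hat s$'s (which are compositions of $M(s)$'s from (a) applied in the corresponding slots, hence in $\K[\hat{\mathbf I}]$). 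Managing this bookkeeping through the nested induction, and checking that the composition structure given by Lemma \ref{lem:generatingprods} stays compatible with the sign/product combinatorics of Remark \ref{rem:formaelem}, is the only non-routine part of the argument.
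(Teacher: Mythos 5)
Your proposal follows essentially the same route as the paper's proof: induction on $\vert t\vert$, with the case $t={\mathbf 1}_n$ handled by Remark \ref{rem:formaelem}, the general case factored through Lemma \ref{lem:generatingprods} and Lemma \ref{lem:Moebirred}, and the colour adjustment at the bottom node of each reducible branch obtained by splitting $\veebar_s$ as $M(s)+\veebar_{s_{\bullet}}$. The ``final bookkeeping'' you defer is precisely what the paper carries out explicitly in its case (b) (the subcases $z_1=\vert$ and $\vert z_1\vert>1$), so nothing essential is missing; the converse inclusion you add is the easy direction, which the paper leaves implicit.
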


\begin{proof} We need to show that any operator of the form $M_{t}(s_1,\dots ,s_{n-1})$ is a composition of operations in $\K[\hat{\mathbf I}]$, for any tree $t\in {\mbox{$\rightthreetimes$-Irr}_n}$ and any collection of elements $s_1,\dots ,s_{n-1}$ in $S$. We proceed by recursion on $\vert t\vert$.

The result is immediate for $\vert t\vert =2$. For $\vert t\vert  > 2$, we have that $t = {\mathbf 1}_k\circ (t_1,\dots ,t_{k-1}, \vert)$, with $t_1$ irreducible. 
\bigskip 

If $\vert t\vert = k$, then $t ={\mathbf 1}_k$. By Remark \ref{rem:formaelem}, we get that $M_t(s_1,\dots ,s_{k-1})$ is a linear combination of a composition of the products $M(s_1),\dots ,M(s_{k-1})$ and operators $M_{{\mathbf 1}_k}(u_1,\dots ,u_{k-1})$ satisfying that at least for one integer $j\in \{ 1,\dots ,k-1\}$ the elements $s_j = s_{\bullet}$, and $M_{{\mathbf 1}_k}(u_1,\dots ,u_{k-1})$ belongs to $\K[\hat{\mathbf I}]$.
\medskip

If $\vert t\vert > k$, then we have to consider different cases\begin{enumerate}[(a)]
\item If there exists $i$ such that $t_i$ is $\rightthreetimes$-irreducible and $\vert t_i\vert > 1$, then by Lemma \ref{lem:generatingprods} we have that 
$ M_t(s_1,\dots ,s_{n-1})$ equals to:\\
 $M_{t\rq} (s_1,\dots ,s_l, s_{l+m_i},\dots , s_{n-1})\circ ({\mbox{Id}^{\otimes l}}\otimes M_{t_i}(s_{l+1},\dots ,s_{l+m_i-1})\otimes {\mbox{Id}^{\otimes n-l-m_i}}),$\\
where $t\rq =  {\mathbf 1}_k\circ (t_1,\dots , t_{i-1},\vert ,t_{i+1},\dots  ,t_{k-1}, \vert)$, $m_j $ is the degree of the tree $t_j$ for $1\leq j < k$, and $l = m_1+\dots +m_{i-1}$.

In this case, $\vert t_i\vert < \vert t\vert$ and $\vert t\rq \vert < \vert t\vert$, so the result is true applying the recursive hypothesis.

\item If there is no tree $i$ such that $t_i$ is $t_i\neq \vert$ and  $t_i$ is $\rightthreetimes$-irreducible, then there exists at least one $t_i = z_1\rightthreetimes z_2$ with $z_1\in {\mbox{$\rightthreetimes$-Irr}_r}$, for some $r\geq 1$, because $\vert t\vert > k$. Moreover, as $t$ is $\rightthreetimes$-irreducible, we get that $t_1 = \vert$.

We have that $t = t\rq \circ_{i}t_{i}$, where $t\rq = {\mathbf 1}_k\circ (t_1,\dots ,t_{i-1},\vert ,t_{i+1},\dots ,t_{k-1},\vert )$.  Note that $\vert t\rq \vert < \vert t\vert$, so by a recursive argument, we know that $M_{t_{rq}}(s_1,\dots ,s_{i_1-1},s_{i_1+l-1},\dots ,s_{n-1})\in \K[\hat{\mathbf I}]$.
\medskip

 If $z_1 = \vert$, then $t_i = z_2\circ_1  \includegraphics[width=0.02\textwidth]{dib4.eps}$. Applying Lemma \ref{lem:Moebirred}, we have that 
\begin{equation*}M_{t_i} = \vert \rightthreetimes M_{z_2} = M_{z_2}\circ_1 \includegraphics[width=0.02\textwidth]{dib4.eps}.\end{equation*} 
Therefore
\begin{equation*} M_{t_i}(s_{l+1},\dots ,s_{l+m_i-1}) = M_{z_2}(s_{l+2}, \dots ,s_{l+m_i-1})\circ ((M(s_l)+ \includegraphics[width=0.02\textwidth]{dib4.eps}(s_{\bullet}))\otimes 
 {\mbox{Id}^{\otimes (m_i-1)}}.\end{equation*}
So, we get that $M_t(s_1,\dots ,s_{n-1}) =$
\begin{align*} &M_{t\rq}(s_1,\dots ,s_{l}, s_{l+m_i},\dots ,s_{n-1}) \circ ({\mbox{Id}^{\otimes (l)}}\otimes (\vert \rightthreetimes M_{z_2}(s_{l+1},\dots ,s_{l+m_i-1}))\otimes {\mbox{Id}^{\otimes (n-l-m_i)}})=\\
&M_{t\rq \circ_{l+1} z_2}(s_1,\dots ,s_{l},s_{l+2},\dots ,s_{n-1})\circ ({\mbox{Id}^{\otimes l}}\otimes M(s_{l+1})\otimes {\mbox{Id}^{\otimes (n-l-2)}}) +\\
&M_{t\rq}(s_1,\dots ,s_{l},s_{l+m_i},\dots ,s_{n-1})\circ ({\mbox{Id}^{\otimes l}}\otimes (M_{z_2}\circ _1\includegraphics[width=0.02\textwidth]{dib4.eps}(s_{\bullet}, s_{l+2},\dots ,s_{l+m_i-1}))\otimes {\mbox{Id}^{\otimes (n-l-m_i)}}),\end{align*}
where $m_i =\vert t_i\vert$ and $l = m_1+\dots +m_{i-1}$.

As $\vert t\rq\circ _{l+1} z_2\vert = \vert t\vert -1$, applying a recursive argument, we have that the operator 
\begin{equation*} M_{t\rq \circ_{l+1} z_2}(s_1,\dots ,s_{l},s_{l+2},\dots ,s_{n-1}) \in \K[\hat{\mathbf I}],\end{equation*}
 which implies that the composition 
\begin{equation*} M_{t\rq \circ_{l+1} z_2}(s_1,\dots ,s_{l},s_{l+2},\dots ,s_{n-1})\circ ({\mbox{Id}^{\otimes (l+1)}}\otimes M(s_{l+1})\otimes {\mbox{Id}^{\otimes (n-l-2)}}) \end{equation*} is an element of $\K[\hat{\mathbf I}]$, too.

On the other hand, the operation 
\begin{equation*} M_{t\rq}(s_1,\dots ,s_{l},s_{l+m_i},\dots ,s_{n-1})\circ ({\mbox{Id}^{\otimes l}}\otimes (M_{z_2}\circ _1\includegraphics[width=0.02\textwidth]{dib4.eps}(s_{\bullet}, s_{l+1},\dots ,s_{l+m_i-1}))\otimes {\mbox{Id}^{\otimes (n-l-m_i)}})\end{equation*}
 also belongs to $\K[\hat{\mathbf I}]$, by the definition of $\K[\hat{\mathbf I}]$.
 \medskip

If $\vert z_1\vert > 1$, then Lemma \ref{lem:Moebirred} states that $M_{t_i} = M_{z_1}\rightthreetimes M_{z_2} = M_{\vert \rightthreetimes z_2}\circ_1 M_{z_1}$.

By Lemma \ref{lem:generatingprods}, we get that $M_t(s_1,\dots ,s_{n-1}) =$
\begin{equation*} 
 M_{t\rq\circ_{l+1}(\vert \rightthreetimes z_2)}(s_1,\dots ,s_{l}, s_{l+h_1 -1},\dots ,s_{n-1})\circ ({\mbox{Id}^{\otimes l}}\otimes 
M_{z_1}(s_{l+1},\dots ,s_{l+\vert z_1\vert -1})\otimes {\mbox{Id}^{\otimes (n-l-h_1)}}),\end{equation*}
where $h_1=\vert z_1\vert$.
But we prove in the previous case that the element 
\begin{equation*} M_{t\rq\circ_{l+1}(\vert \rightthreetimes z_2)}(s_1,\dots ,s_{l}, s_{l+h_1 -1},\dots ,s_{n-1})\end{equation*} belongs to $\K[\hat{\mathbf I}]$, and $M_{z_1}(s_{l+1},\dots ,s_{l+h_1 -1})\in \K[\hat{\mathbf I}]$ because $z_1\in {\mbox{$\rightthreetimes$-Irr}}$ and $h_1 = \vert z_1\vert < \vert t\vert$.
\medskip

Therefore, the element 
\begin{equation*}M_{t\rq}(s_1,\dots ,s_{l},s_{l+m_i},\dots ,s_{n-1})\circ ({\mbox{Id}^{\otimes l}}\otimes (M_{z_2}\circ _1\includegraphics[width=0.02\textwidth]{dib4.eps}(s_{\bullet}, s_{l+2},\dots ,s_{l+m_i-1}))\otimes {\mbox{Id}^{\otimes (n-l-m_i)}})\end{equation*}
also belongs to $\K[\hat{\mathbf I}]$, which ends the proof.

 \end{enumerate}

\end{proof}
The following is an explicit example of how to apply the previous results.
\begin{example}
	
Let	
\begin{equation*}\scalebox{0.4}{	\includegraphics[width=0.6\textwidth]{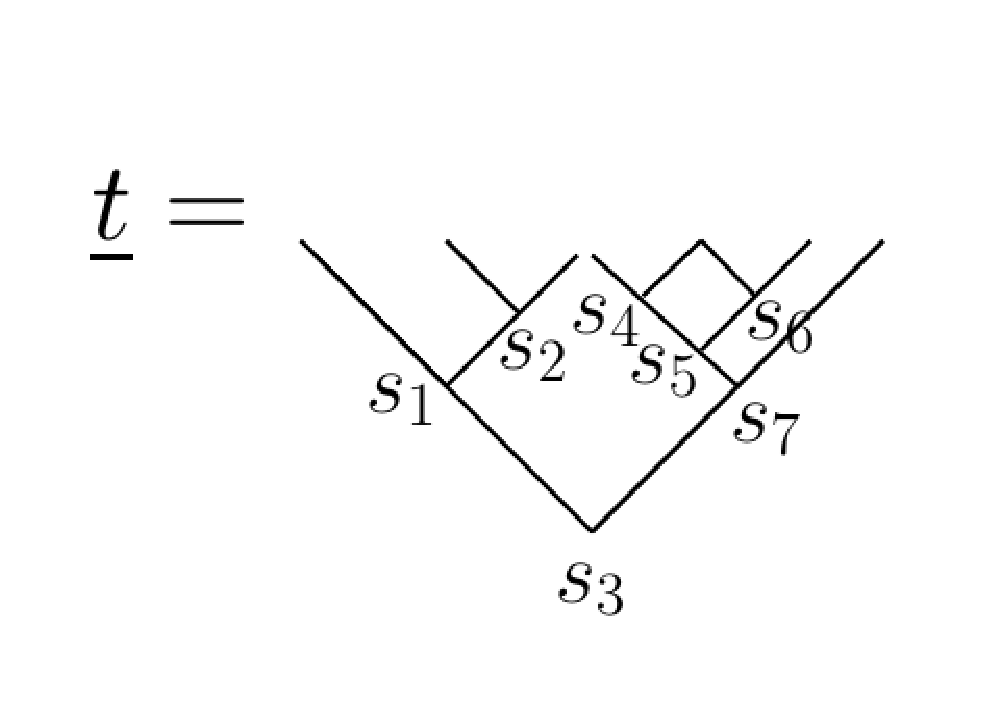}}\end{equation*}

\begin{center}
\scalebox{1}{	\includegraphics[width=0.7\textwidth]{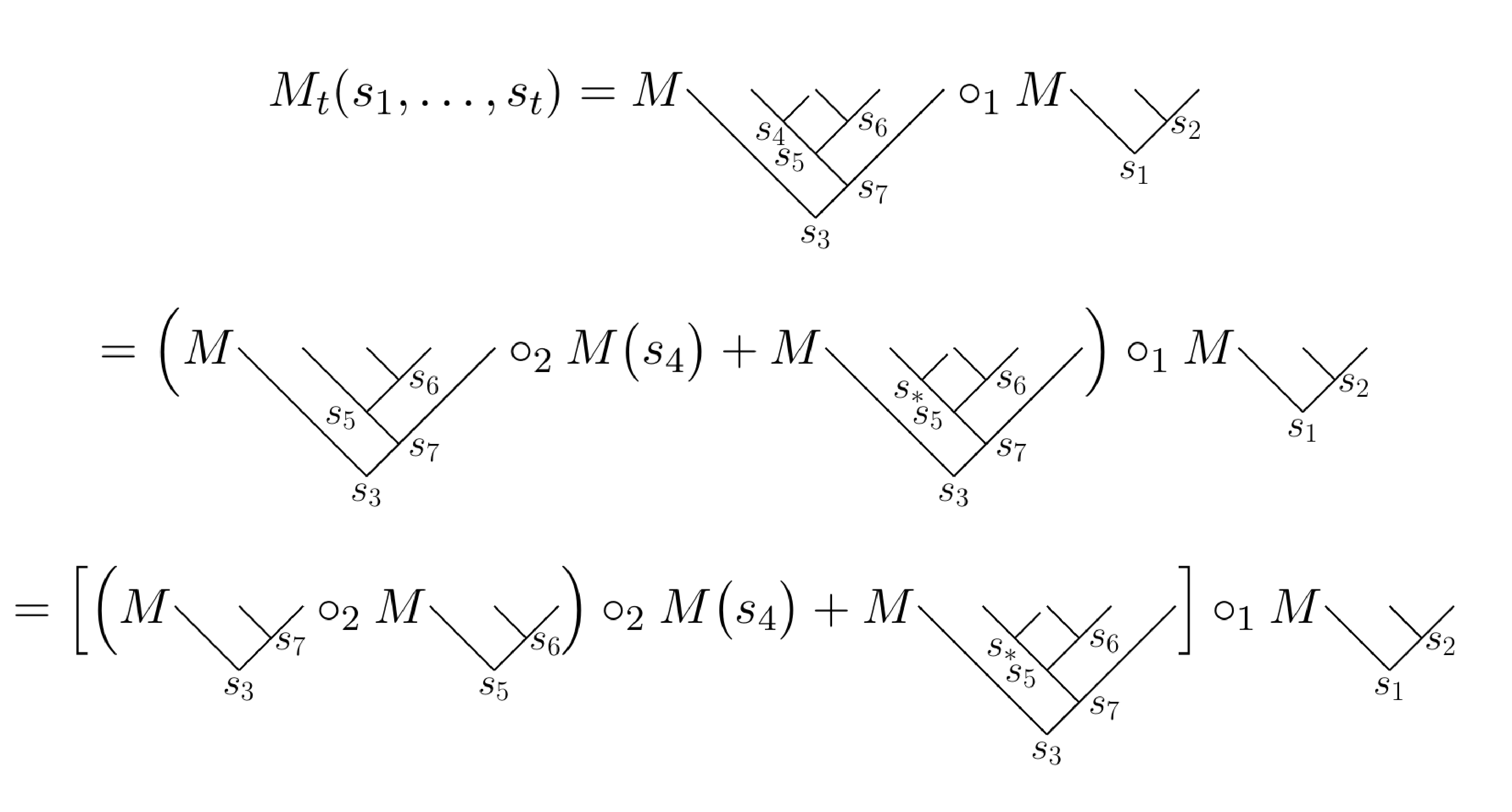}}
\end{center}	
\begin{center}
\scalebox{1}{	\includegraphics[width=0.6\textwidth]{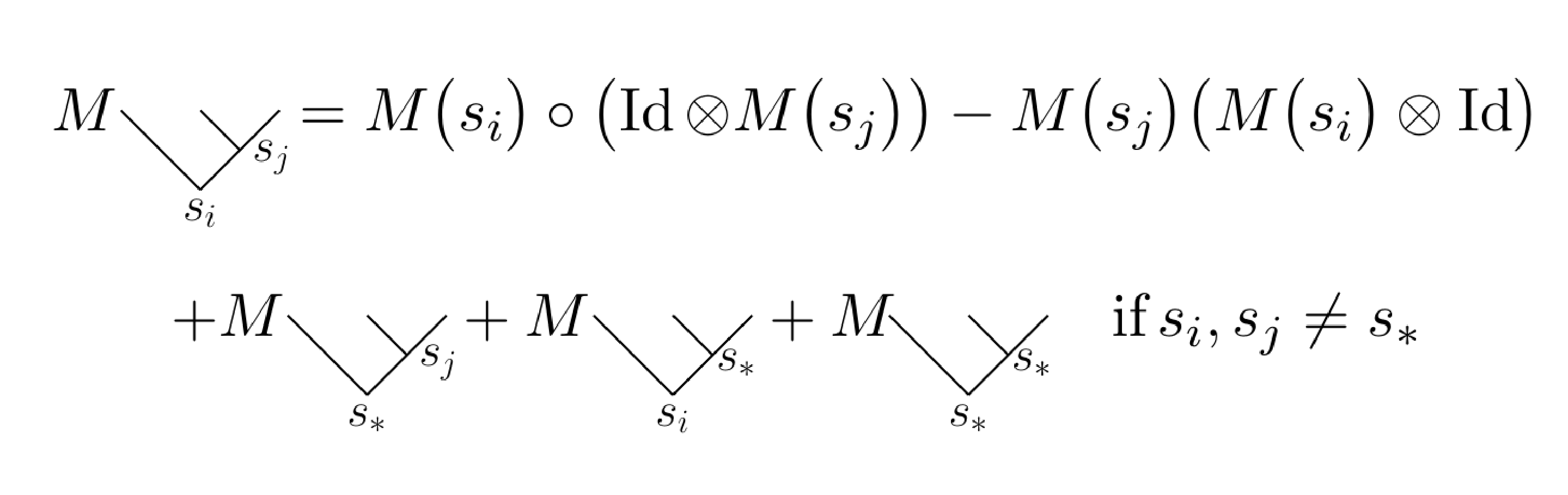}}
\end{center}	
then  we can write $M_{t}( s_1,\cdots, s_{n-1})$ as a linear combination of $\K[\hat{\mathbf I}]$.
\end{example}

\bigskip

\bigskip

\section{An example: the coalgebra of integer relations} 
\medskip

For any nonnegative integer $n$, let $[n]:=\{1,\dots ,n\}$ be the set of the first $n$ natural numbers.

\begin{definition} \label{def:intrel} For $n\geq 1$, an {\it integer relation of size $n$} is a reflexive relation on the set $[n]$, that is a subset $R$ of the cartesian product $[n]\times [n]$. 

The set of integer relations of size $n$ is denoted ${\mathcal R}_n$. 

 Given a permutation $\sigma\in \Sigma_n$ and a relation $R\in {\mathcal R}_n$, let $\sigma\cdot R$ be the unique relation in ${\mathcal R}_n$ satisfying that $(i,j)\in R$ if, and only if, $(\sigma (i), \sigma (j))$ belongs to $\sigma\cdot R$. 
 
 For any $S\subseteq [n]$ and any relation $R\in  {\mathcal R}_n$, the {\it restriction} of $R$ to $S$ is the relation $R\vert_S\in {\mathcal R}_{\vert S\vert}$ defined by the condition
 \begin{equation*} (i,j)\in R\vert_S\ {\rm if,\ and\ only\ if}\ (s_i,s_j)\in R,\end{equation*}
 where $S = \{ s_1<\dots < s_{\vert S\vert}\}$.\end{definition}

In our pictures, we always represent an integer relation $R$ as follows:  the numbers $1,\cdots, n$ are written from left to right, and the increasing relations $(i, j)\in R$ are represented by an arc above the nodes, while an arc below the nodes represents a decreasing relation $(j, i)\in R$ below, for $i< j$. Although all the relations are reflexive, we omit the relations $(i, i)$ in the pictures.
\medskip
 
 In the present section we define a collection $S_{\mathcal R}$ of binary magmatic products on the dual coalgebra of Pilaud and Pons Hopf algebra ${\mathbb H}_{{\mathcal R}_{PP}}^*$, which generates the underlying vector space $\K[{\mathcal R}]$ as an $S_{\mathcal R}$-magmatic algebra over the unique element $\#\in {\mathcal R}_1$. However, this $S_{\mathcal R}$-magmatic algebra is not free. 
 
 As a consequence of the previous section, we get that ${\mathbb H}_{{\mathcal R}_{PP}}^*$ is isomorphic, as a coalgebra, to the cotensor algebra over its subspace ${\mbox{Prim}({\mathbb H}_{{\mathcal R}_{PP}}^*)}$ of primitive elements, and that  ${\mbox{Prim}({\mathbb H}_{{\mathcal R}_{PP}}^*)}$ is generated by the unique element $\#$ of ${\mathcal R}_1$ under the action of the products defined in the previous section.

\subsection{The Hopf algebra ${\mathbb H}_{{\mathcal R}_{PP}^*}$}
\medskip

The coproduct $\Delta_{\mathcal R}$ on ${\mathbb H}_{{\mathcal R}_{PP}^*}$ is defined by the equation
\begin{equation*} \Delta_{\mathcal R} (R) = \sum_{i=0}^n R\vert_{[i]}\otimes R\vert_{\{i+1,\dots ,n\}},\end{equation*}
for any $R\in R\in {\mathcal R}_n$.
\medskip

It is immediate to verify that $\Delta_{\mathcal R}$ is coassociative and the dual of the product defined by V. Pilaud and V. Pons in \cite{PiPo1}. 
\medskip

To define the corresponding product, we will introduce some binary products on the vector space $\K[{\mathcal R}]$.

\begin{definition} \label{def:asprodinrel} For any pair of reflexive relations $R\in {\mathcal R}_n$ and $Q\in {\mathcal R}_m$, define the binary associative products $\sqcup$, $\uparrow$, $\downarrow$ and $\updownarrow$ as follows\begin{enumerate}
\item $R\sqcup Q$ is the reflexive relation on $[n+m]$ given by
\begin{equation*} R\sqcup Q :=\begin{cases} (i,j)\in R, &\ {\rm for}\ 1\leq i,j\leq n,\\
(i-n, j-n)\in Q, , &\ {\rm for}\ n+1\leq i,j\leq n+m.\end{cases}\end{equation*}
\item $R\uparrow Q$ is the reflexive relation on $[n+m]$ given by
\begin{equation*} R\uparrow Q :=\begin{cases} (i,j)\in R, &\ {\rm for}\ 1\leq i,j\leq n,\\
(i-n, j-n)\in Q, &\ {\rm for}\ n+1\leq i,j\leq n+m,\\
(i,j), &\ {\rm for}\ 1\leq i\leq n\ {\rm and}\ n+1\leq j\leq n+m.\end{cases}\end{equation*}
\item $R\downarrow Q$ is the reflexive relation on $[n+m]$ given by
\begin{equation*} R\downarrow Q :=\begin{cases} (i,j)\in R, &\ {\rm for}\ 1\leq i,j\leq n,\\
(i-n, j-n)\in Q, &\ {\rm for}\ n+1\leq i,j\leq n+m,\\
(i,j), &\ {\rm for}\ n+1\leq i\leq n+m\ {\rm and}\ 1\leq j\leq n.\end{cases}\end{equation*}
\item $R\updownarrow Q$ is the reflexive relation on $[n+m]$ given by
\begin{equation*} R\updownarrow Q :=\begin{cases} (i,j)\in R, &\ {\rm for}\ 1\leq i,j\leq n,\\
(i-n, j-n)\in Q, &\ {\rm for}\ n+1\leq i,j\leq n+m,\\
(i,j), &\ {\rm when\ either}\ 1\leq i\leq n\ {\rm and}\ n+1\leq j\leq n+m,\ {\rm or}\ \\ 
& n+1\leq i\leq n+m\ {\rm and}\ 1\leq j\leq n.\end{cases}\end{equation*}\end{enumerate}\end{definition}

Note that the products $\sqcup$, $\uparrow$, $\downarrow$ and $\updownarrow$ are associative, but they do not verify 
\begin{equation*} x\star (y*z) = (x\star y)*z,\end{equation*}
for $x,y,z\in \K[{\mathcal R}]$, $\star\neq *$ and $\star, *\in \{ \sqcup, \uparrow, \downarrow, \updownarrow\}$.
\medskip

\begin{example}\label{ex:downarrow}
For $P= Q = \# \sqcup \#$, we get that 
\begin{equation*}\scalebox{0.3}{\includegraphics[width=0.6\textwidth]{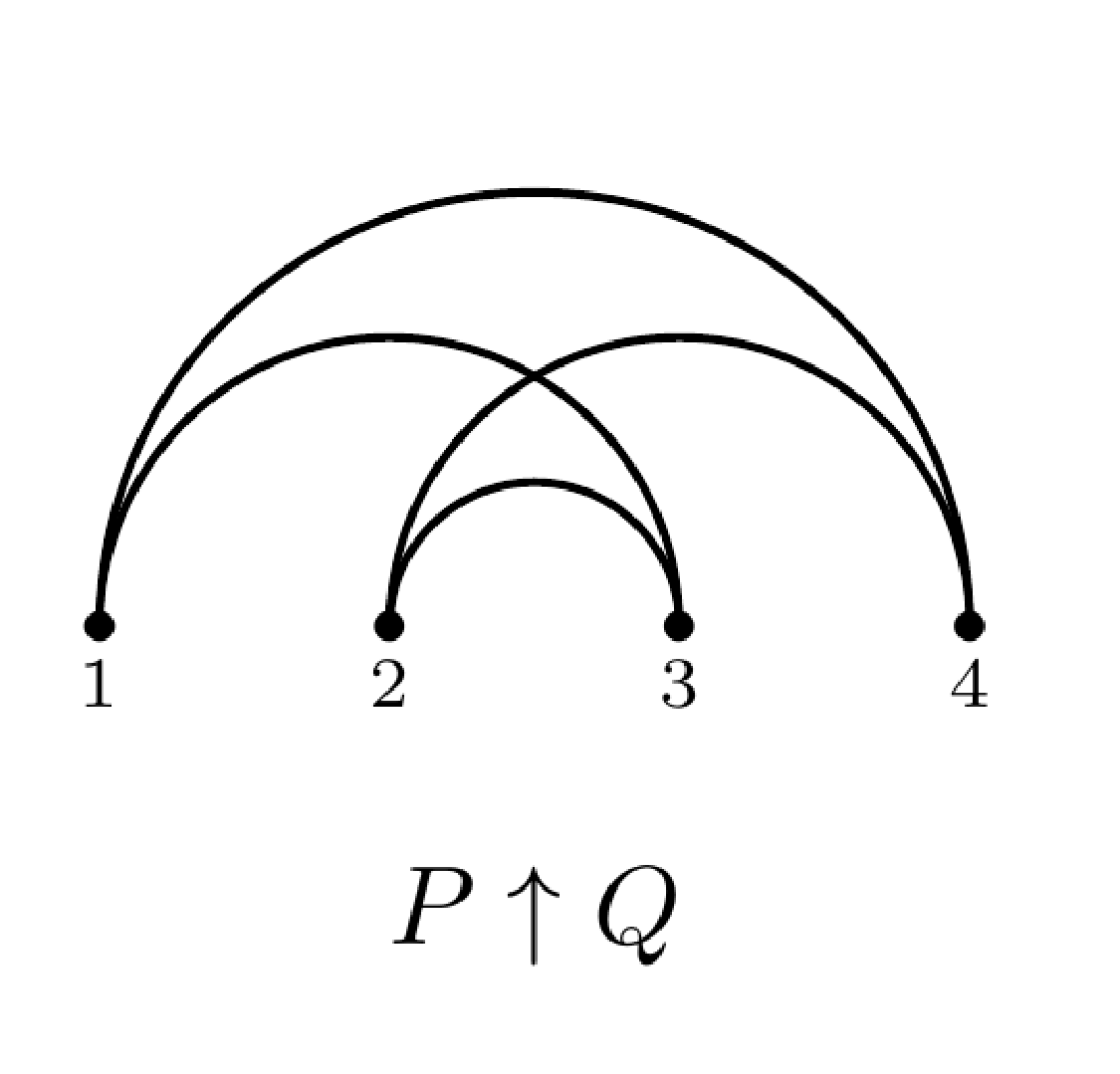}}\end{equation*}	
	
\end{example}

\begin{lemma} \label{lem:infoprel} The coproduct $\Delta_{\mathcal R}$ satisfies the unital infinitesimal relation with the products $\sqcup$, $\uparrow$, $\downarrow$ and $\updownarrow$.\end{lemma}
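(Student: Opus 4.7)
Fix $R\in {\mathcal R}_n$, $Q\in {\mathcal R}_m$ and $\star \in \{\sqcup , \uparrow , \downarrow , \updownarrow\}$, and write $N:=n+m$. The plan is to compute
\[
\Delta_{\mathcal R}(R\star Q) \;=\; \sum_{i=0}^{N} (R\star Q)\vert_{[i]} \ot (R\star Q)\vert_{\{i+1,\dots ,N\}}
\]
by separating the sum into the three ranges $0\leq i\leq n-1$, $i=n$, and $n+1\leq i\leq N$, and comparing the result to the right-hand side of the unital infinitesimal relation.

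The key observation is the following restriction formulas, which hold for any of the four products $\star$ and follow immediately from Definition \ref{def:asprodinrel} together with the definition of the restriction $R\vert_S$. For $0\leq i\leq n$, every mixed pair $(a,b)$ coming from the $\star$ operation has $a$ or $b$ in $\{n+1,\dots ,N\}$, so such pairs disappear when we restrict to $[i]\subseteq [n]$; hence
\[
(R\star Q)\vert_{[i]} = R\vert_{[i]} \quad\text{and}\quad (R\star Q)\vert_{\{i+1,\dots ,N\}} = R\vert_{\{i+1,\dots ,n\}}\ \star\ Q ,
\]
where we identify $\{i+1,\dots ,N\}$ with $[n-i+m]$ in the standard way. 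Symmetrically, for $n\leq i\leq N$,
\[
(R\star Q)\vert_{[i]} = R\ \star\ Q\vert_{[i-n]} \quad\text{and}\quad (R\star Q)\vert_{\{i+1,\dots ,N\}} = Q\vert_{\{i-n+1,\dots ,m\}}.
\]
In each case the mixed arcs of $R\star Q$ whose endpoints both lie in the restricted window are exactly the mixed arcs produced by the $\star$-operation applied to the two restrictions. This is the only place where the specific form of $\star$ enters, and it is a case-by-case verification that is essentially immediate from Definition \ref{def:asprodinrel}.

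Plugging these formulas back in, the $0\leq i\leq n$ part of $\Delta_{\mathcal R}(R\star Q)$ becomes
\[
\sum_{i=0}^{n} R\vert_{[i]}\ot \bigl(R\vert_{\{i+1,\dots ,n\}}\star Q\bigr)
\;=\; \sum R_{(1)}\ot (R_{(2)}\star Q),
\]
using the notation $\Delta_{\mathcal R}(R)=\sum R_{(1)}\ot R_{(2)}$. Likewise, the $n\leq i\leq N$ part yields $\sum (R\star Q_{(1)})\ot Q_{(2)}$. Finally, the term $i=n$ equals $R\ot Q$ and has been counted once in each of the two sums. Subtracting it once gives
\[
\Delta_{\mathcal R}(R\star Q) \;=\; \sum R_{(1)}\ot (R_{(2)}\star Q) + \sum (R\star Q_{(1)})\ot Q_{(2)} - R\ot Q ,
\]
which is exactly the unital infinitesimal relation.

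The only step that requires real attention is the restriction identity for the mixed arcs, and even there the main obstacle is purely notational: one must check for each of $\sqcup,\uparrow,\downarrow,\updownarrow$ that the set of mixed pairs defined on $[N]$ restricts correctly to the set of mixed pairs defined on the sub-intervals, which is immediate because the mixed arcs are prescribed by the location (left block vs.\ right block) of the two endpoints and this location is preserved under restriction to an initial or a final segment.
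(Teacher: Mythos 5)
Your proposal is correct and follows essentially the same route as the paper: both establish the restriction identities $(R\star Q)\vert_{[i]}=R\vert_{[i]}$ and $(R\star Q)\vert_{\{i+1,\dots ,N\}}=R\vert_{\{i+1,\dots ,n\}}\star Q$ for $i\leq n$ (and their symmetric counterparts for $i\geq n$), split the coproduct sum accordingly, and observe that the term $R\otimes Q$ at $i=n$ is counted twice, yielding the correction term $-R\otimes Q$. No substantive difference from the paper's argument.
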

\begin{proof} Note that for any product $\star \in \{ \sqcup, \uparrow, \downarrow, \updownarrow\}$ and a pair of relations $R\in{ \mathcal R}_n$ and $Q\in {\mathcal R}_m$ we have that
\begin{equation*}( P\star Q)\vert_{[i]} = \begin{cases} P\vert_{[i]},&{\rm for}\ 0\leq i\leq n,\\
P\star Q\vert_{\{n+1,\dots ,i\}}, &{\rm for}\ n+1\leq i\leq n+m.\end{cases}\end{equation*}
while 
\begin{equation*}( P\star Q)\vert_{\{i+1,\dots ,n+m\}} = \begin{cases} P\vert_{\{i+1,\dots ,n\}}\star Q,&{\rm for}\ 0\leq i\leq n,\\
Q\vert_{\{i+1,\dots ,n+m\}}, &{\rm for}\ n+1\leq i\leq n+m.\end{cases}\end{equation*}

So, \begin{align*} \Delta_{\mathcal R}(P\star Q) = &\sum_{i=0}^{n} P\vert_{[i]}\otimes (P\vert_{\{i+1,\dots ,n\}}\star Q) + \sum_{i=n+1}^{n+m} (P\star Q\vert_{\{n+1,\dots ,i\}})\otimes Q\vert_{\{i+1,\dots ,n+m\}} =\\
&\sum P_{(1)}\otimes (P_{(2)}\star Q) + \sum (P\star Q_{(1)})\otimes Q_{(2)} - P\otimes Q,\end{align*}
because $P\otimes Q$ appears twice in $\sum P_{(1)}\otimes (P_{(2)}\star Q) + \sum (P\cdot Q_{(1)})\otimes Q_{(2)}$, where $\Delta_{\mathcal R}(P) = \sum P_{(1)}\otimes P_{(2)}$ for any $P\in {\mathcal R}$.\end{proof}
\medskip

Let us show that, using the shuffle, we may associate to each product $\star \in  \{ \sqcup, \uparrow, \downarrow, \updownarrow\}$ another associative product $\diamond_{\star}$ satisfying that the coproduct $\Delta_{\mathcal R}$ is an algebra homomorphism for $\diamond_{\star}$, that is $(\K[{\mathcal R}], \diamond_{\star}, \Delta_{\mathcal R})$ is a graded Hopf algebra in the usual sense.
\medskip 

\begin{notation} \label{notn:permparc} Let $\sigma\in {\mbox{sh}(n,m)}$ be an $(n,m)$-shuffle. For any $0\leq i\leq n+m$, let $0\leq p\leq n$ and $0\leq q\leq m$ be the unique integers such that $\sigma(p)\leq i <\sigma(p+1)$ and $\sigma (n+q)\leq i\leq \sigma(n+m)$. We denote by 
$\sigma_{(1)}^i\in \Sigma_ i$ and $\sigma_{(2)}^i\in \Sigma_{n+m-i}$ the permutations $\sigma_{(1)}^i:=\sigma^{\{1,\dots , p, n+1,\dots ,n+q\}}$ and $\sigma_{(2)}^i:=\sigma^{\{p+1,\dots , n, n+q+1,\dots ,n+m\}}$.\end{notation}

\begin{remark} \label{rem:diagshuff} Let $R$ and $Q$ be two relations, with $R\in {\mathcal R}_n$ and $Q\in {\mathcal R}_m$. For any permutation $\sigma $ which is a $(n,m)$-shuffle, any product $\star \in  \{ \sqcup, \uparrow, \downarrow, \updownarrow\}$ 
and any integer $0\leq i\leq n+m$, we have that \begin{enumerate}[(i)]
\item $( \sigma\cdot (R\star Q))\vert_{[i]} = \sigma_{(1)}^i\cdot  (R\vert _{[p]} \star Q\vert_{[q]})$,
\item $(\sigma\cdot  (R\star Q))\vert_{\{i+1,\dots ,n+m\}} = \sigma_{(2)}^i\cdot (R\vert_{\{p+1,\dots ,n\}}\star Q\vert_{\{q+1,\dots ,m\}})$,\end{enumerate}
where $p$ is the maximal integer satisfying that $1\leq p\leq n$ and $\sigma(p)\leq i$, $q$ is the maximal integer satisfying that $1\leq q\leq m$ and $\sigma(n+q)\leq i$, and $\sigma_{(1)}^i$ and $\sigma_{(2)}^i $ are the permutations defined in Notation \ref{notn:permparc}. Moreover, note that $\sigma_{(1)}^i\in {\mbox{sh}(p,q)}$ and $\sigma_{(2)}^i\in {\mbox{sh}(n-p,m-q)}$.\end{remark}
\medskip

\begin{definition} \label{def:proddiamond} For any product $\star\in \{ \sqcup, \uparrow, \downarrow, \updownarrow\}$, define the product $\diamond_{\star}$ as follows
\begin{equation*} R\diamond_{\star} Q:= \sum_{\sigma\in {\mbox{sh}(n,m)}}\sigma\cdot (R\star Q),\end{equation*}
for any pair of relations $R\in{ \mathcal R}_n$ and $Q\in {\mathcal R}_m$.\end{definition}

It is easy to see that the associativity of the products $ \sqcup,\  \uparrow,\  \downarrow$ and $\updownarrow$ imply that $\diamond_{\star}$ is associative for $\star\in \{ \sqcup, \uparrow, \downarrow, \updownarrow\}$. Moreover, we have the following result

\begin{proposition} \label{prop:shuffbialg} For any $\star\in \{ \sqcup, \uparrow, \downarrow, \updownarrow\}$, the data $(\K[{\mathcal R}], \diamond_{\star}, \Delta_{\mathcal R})$ is a Hopf algebra.\end{proposition}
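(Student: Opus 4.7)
The plan is to verify the remaining bialgebra axioms beyond what the excerpt already notes (associativity of $\diamond_\star$ and coassociativity of $\Delta_{\mathcal R}$), then produce an antipode by gradedness. The unit $\mathbf{1}$ in degree $0$ is the (empty) relation on the empty set: since $\mathrm{sh}(0,m)$ and $\mathrm{sh}(n,0)$ are singletons and $\mathbf{1}\star R = R = R\star \mathbf{1}$ for each $\star\in\{\sqcup,\uparrow,\downarrow,\updownarrow\}$, one has $\mathbf{1}\diamond_\star R = R = R\diamond_\star \mathbf{1}$. The counit sends $\mathbf{1}\mapsto 1_{\K}$ and vanishes on positive-degree relations; this is consistent with the boundary terms $\mathbf{1}\otimes R + R\otimes \mathbf{1}$ appearing in $\Delta_{\mathcal R}(R)$.

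The heart of the argument is to show that $\Delta_{\mathcal R}$ is an algebra homomorphism for $\diamond_\star$, i.e.\ $\Delta_{\mathcal R}(R \diamond_\star Q) = \Delta_{\mathcal R}(R) \diamond_\star \Delta_{\mathcal R}(Q)$ for $R\in {\mathcal R}_n$ and $Q\in {\mathcal R}_m$. Expanding by definition and applying Remark \ref{rem:diagshuff} summand by summand,
\begin{align*}
\Delta_{\mathcal R}(R \diamond_\star Q) &= \sum_{\sigma \in \mathrm{sh}(n,m)} \sum_{i=0}^{n+m} (\sigma\cdot(R\star Q))|_{[i]} \otimes (\sigma\cdot(R\star Q))|_{\{i+1,\dots,n+m\}} \\
&= \sum_{\sigma, i} \bigl(\sigma_{(1)}^i \cdot (R|_{[p]} \star Q|_{[q]})\bigr) \otimes \bigl(\sigma_{(2)}^i \cdot (R|_{\{p+1,\dots,n\}} \star Q|_{\{q+1,\dots,m\}})\bigr),
\end{align*}
where $p=p(\sigma,i)$ and $q=q(\sigma,i)$ are as in Notation \ref{notn:permparc} and satisfy $p+q=i$.

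The combinatorial crux is the bijection
\begin{equation*}
\{(\sigma, i) \mid \sigma \in \mathrm{sh}(n,m),\ 0 \leq i \leq n+m\} \longleftrightarrow \bigsqcup_{p=0}^n \bigsqcup_{q=0}^m \mathrm{sh}(p,q) \times \mathrm{sh}(n-p, m-q),
\end{equation*}
given by $(\sigma, i) \mapsto (\sigma_{(1)}^i, \sigma_{(2)}^i)$; the inverse assembles, from $(\sigma_1,\sigma_2) \in \mathrm{sh}(p,q)\times \mathrm{sh}(n-p,m-q)$, the unique $\sigma\in \mathrm{sh}(n,m)$ agreeing with $\sigma_1$ on the first $p+q$ positions and with $\sigma_2$ on the remaining ones, the cut point being $i:=p+q$. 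Reindexing by this bijection, the previous double sum becomes
\begin{equation*}
\sum_{p=0}^n \sum_{q=0}^m \bigl(R|_{[p]} \diamond_\star Q|_{[q]}\bigr) \otimes \bigl(R|_{\{p+1,\dots,n\}} \diamond_\star Q|_{\{q+1,\dots,m\}}\bigr),
\end{equation*}
which is exactly $\Delta_{\mathcal R}(R) \diamond_\star \Delta_{\mathcal R}(Q)$ in the tensor-product algebra.

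Finally, $(\K[{\mathcal R}], \diamond_\star, \Delta_{\mathcal R})$ is a graded connected bialgebra (the degree-$0$ component is one-dimensional, spanned by $\mathbf{1}$), so an antipode exists and is produced by the standard Takeuchi recursion. The main obstacle I anticipate is checking the shuffle bijection cleanly: in particular that the restrictions $\sigma_{(1)}^i$ and $\sigma_{(2)}^i$ of Notation \ref{notn:permparc} actually land in $\mathrm{sh}(p,q)$ and $\mathrm{sh}(n-p,m-q)$ respectively, and that each pair $(\sigma_1,\sigma_2)$ arises from a unique $(\sigma,i)$. Once this is granted, the Hopf compatibility reduces to the reindexing above.
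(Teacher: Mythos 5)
Your proof is correct and follows essentially the same route as the paper: reduce to showing $\Delta_{\mathcal R}$ is an algebra morphism (gradedness handling the antipode), then combine Remark \ref{rem:diagshuff} with the bijection between shuffles-with-a-cut-point and pairs of smaller shuffles. Your formulation of that bijection, as $(\sigma,i)\leftrightarrow(p,q,\sigma_{(1)}^i,\sigma_{(2)}^i)$, is in fact the more precise version of the map $\beta$ the paper writes down (whose stated source and target only match in cardinality once the cut index $i$ is included), and the extra unit/counit checks you supply are routine details the paper leaves implicit.
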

\begin{proof} As $\K[{\mathcal R}]$ is graded, it suffices to show that $\Delta_{\mathcal R}$ is an algebra homomorphism. 

Note first that, for any pair of positive integers $n, m$ there exists a bijective map 
\begin{equation*} \beta: \{(p,q,\sigma_{(1)}, \sigma_{(2)})\mid 0\leq p\leq n,\ 0\leq q\leq m,\ \sigma_{(1)}\in {\mbox{sh}(p,q)},\sigma_{(2)}\in {\mbox{sh}(n-p,m-q)}\}\longrightarrow {\mbox{sh}(n,m)},\end{equation*}
given by $\beta (p,q,\sigma_{(1)}, \sigma_{(2)})(j):=\begin{cases}\sigma_{(1)}(j),&\ {\rm for}\ 1\leq j\leq p\\
\sigma_{(2)}(j-p)+p+q,&\ {\rm for}\ p+1\leq j\leq n\\ 
\sigma_{(1)}(j-n+p),&\ {\rm for}\ n+1\leq j\leq n+q\\
\sigma_{(2)}(j-q-p)+p+q,&\ {\rm for}\ n+q+1\leq j\leq n+m.\end{cases}$

The result follows immediately from the paragraph above and Remark \ref{rem:diagshuff}.\end{proof}

\begin{remark} \label{rem:PilPonHopf} The Hopf algebra of integer relations ${\mathbb H}_{{\mathcal R}_{PP}}$ is the dual of the Hopf algebra $(\K[{\mathcal R}], \diamond_{\uparrow}, \Delta_{\mathcal R})$.\end{remark}
\bigskip

\subsection{Magmatic binary products generating the vector space $\K[{\mathcal R}]$}

Let $\#$ be the unique element of ${\mathcal R}_1$. Even if the products $\sqcup, \uparrow, \downarrow$ and $\updownarrow$, as well as their associated shuffles $\diamond_{\sqcup}, \diamond_{\uparrow}, \diamond_{\downarrow}$ and $\diamond_{\updownarrow}$, may be extended by linearity to the whole vector space $\K[{\mathcal R}]$, it is clear that their action on $\#$ does not span the whole vector space.
 
Let us define a collection of binary magmatic products satisfying that the vector space $\K[{\mathcal R}]$ is generated by them from $\#$.
\medskip

Let us denote by ${\mathbb O}_{\mathcal R}$ the set of binary products $\{ \sqcup, \uparrow, \downarrow, \updownarrow\}$.

\begin{definition}\label{def:oprel} For $r\geq 1$ and a map $\alpha :[r] \longrightarrow {\mathbb O}_{\mathcal R}$ such that $\alpha (r)\neq \sqcup$, define the graded product $*_{\alpha}$ as follows:
\begin{equation*} R *_{\alpha} Q :=\begin{cases} (i,j)\in R,&{\rm for}\ 1\leq i,j\leq n,\\
(i-n,j-n)\in Q,&{\rm for}\ n < i,j\leq n+m,\\
(n,n+i)&{\rm for}\ 1\leq i\leq {\mbox{min}\{r,m\}}\ {\rm and}\ \alpha(i) =\uparrow,\\
(n+i,n),&{\rm for}\ 1\leq i\leq  {\mbox{min}\{r,m\}}\ {\rm and}\ \alpha(i) =\downarrow,\\
(n, n+i)\ {\rm and}\ (n+i, n),&{\rm for}\ 1\leq i\leq  {\mbox{min}\{r,m\}}\ {\rm and}\ \alpha(i) =\updownarrow.\end{cases}\end{equation*}

For a map $\alpha:[r] \longrightarrow {\mathbb O}_{\mathcal R}$, we say that the {\it size} of $\alpha$ is $r$ and denote it by $s(\alpha)$. The size of the disjoint union $\sqcup$ is $0$.\end{definition}

\begin{example} \label{ex:diffprods}
Consider the following posets\\
\begin{center}
\begin{pspicture}(0,-1)(3,2)\psdots[dotsize=3pt](0,0.5)(1,0.5)(2,0.5)(3,0.5)
\rput(0,0.3){\tiny{1}}
\rput(1,0.3){\tiny{2}}
\rput(2,0.3){\tiny{3}}
\rput(3,0.3){\tiny{4}}
\psarc[linestyle=solid]{<-}(0.5,0.4){0,5}{180}{360}
\psarc[linestyle=solid]{<-}(1,0.4){1}{180}{360}
\psarc[linestyle=solid]{<-}(1.5,0.6){1.5}{0}{180}
\rput(0.7,-0.9){$P$}
\end{pspicture}\hspace{9mm}
\begin{pspicture}(0,-1)(3,2)\psdots[dotsize=3pt](0,0.5)(1,0.5)
\rput(0,0.8){\tiny{1}}
\rput(1,0.8){\tiny{2}}
\psarc[linestyle=solid]{<-}(0.5,0.4){0,5}{180}{360}
\psarc[linestyle=solid]{<-}(0.5,0.6){0,5}{0}{180}
\rput(0.5,-0.8){$Q$}
\end{pspicture}
\end{center}

If $\alpha:[3]\to \bigl\{\amalg, \uparrow, \downarrow, \updownarrow\bigr\}$ is defined as 
$\alpha(1)=\uparrow$, $\alpha(2)=\amalg$ and $\alpha(3)=\downarrow$ then $P\ast_\alpha Q$ is the following poset 
	
\begin{center}
		
\begin{pspicture}(0,-1)(4,3)
\psdots[dotsize=3pt](0,0.5)(1,0.5)(2,0.5)(3,0.5)(4,0.5)(5,0.5)
\psarc[linestyle=solid]{<-}(0.5,0.4){0,5}{180}{360}
\psarc[linestyle=solid]{<-}(1,0.4){1}{180}{360}
\psarc[linestyle=solid]{<-}(1.5,0.6){1.5}{0}{180}
\rput(0,0.3){\tiny{1}}
\rput(1,0.3){\tiny{2}}
\rput(2,0.3){\tiny{3}}
\rput(3,0.8){\tiny{4}}
\rput(4,0.8){\tiny{5}}
\rput(5,0.8){\tiny{6}}
\psarc[linestyle=solid]{<-}(4.5,0.4){0,5}{180}{360}
\psarc[linestyle=solid]{<-}(4.5,0.6){0,5}{0}{180}
\psarc[linestyle=solid,linecolor=red]{<-}(3.5,0.5){0.5}{0}{180}
\end{pspicture}
\end{center}	
If we change the posets for the following ones 
\begin{center}
\begin{pspicture}(0,-1)(3,2)\psdots[dotsize=3pt](0,0.5)(1,0.5)
\rput(0,0.8){\tiny{1}}
\rput(1,0.8){\tiny{2}}
\psarc[linestyle=solid]{<-}(0.5,0.4){0,5}{180}{360}
\rput(0.5,-0.8){$P$} 
\end{pspicture}\hspace{6mm}
\begin{pspicture}(0,-1)(3,2)\psdots[dotsize=3pt](0,0.5)(1,0.5)(2,0.5)
\rput(0,0.3){\tiny{1}}
\rput(1,0.3){\tiny{2}}
\rput(2,0.3){\tiny{3}}
\psarc[linestyle=solid]{<-}(0.5,0.6){0,5}{0}{180}
\psarc[linestyle=solid]{<-}(1,0.6){1}{0}{180}
\psarc[linestyle=solid]{<-}(1.5,0.4){0,5}{180}{360}
\rput(0.7,-0.9){$Q$}
\end{pspicture}
\end{center}	 
Then $P\ast_\alpha Q$ is
 
\begin{center}
		
\begin{pspicture}(0,-1)(4,3)
\psdots[dotsize=3pt](0,0.5)(1,0.5)(2,0.5)(3,0.5)(4,0.5)
\psarc[linestyle=solid]{<-}(0.5,0.4){0,5}{180}{360}
\psarc[linestyle=solid]{<-}(3,0.6){1}{0}{180}
\psarc[linestyle=solid]{<-}(2.5,0.6){0,5}{0}{180}
\psarc[linestyle=solid]{<-}(3.5,0.4){0.5}{180}{360}
\rput(0,0.3){\tiny{1}}
\rput(1,0.3){\tiny{2}}
\rput(2,0.3){\tiny{3}}
\rput(3,0.8){\tiny{4}}
\rput(4,0.8){\tiny{5}}
\psarc[linestyle=solid,linecolor=red]{<-}(2.5,0.5){1.5}{180}{360}
\psarc[linestyle=solid,linecolor=red]{<-}(1.5,0.4){0.5}{0}{180}
\end{pspicture}	
\end{center}
\end{example}

\begin{remark} \label{rem:opbasic} 
Given three elements $R\in {\mathcal R}_n$, $Q\in {\mathcal R}_m$ and $P\in {\mathcal R}_r$, and a pair of maps $\alpha$ and $\gamma$ in the disjoint union $ \bigcup_{p} {\mathbb O}_{\mathcal R}^{[p]} \bigcup \{\sqcup\}$,
we have that
\begin{equation*} (R*_{\alpha} Q)*_{\gamma} P = R*_{\alpha}(Q*_{\gamma}P),\end{equation*}
if, and only if, $s(\alpha) \leq m$ .\end{remark}

In order to simplify notation, we denote by $\sqcup$ both the constant map ${\mathbb N}\longrightarrow {\mathbb O}_{\mathcal R}$, which maps any positive integer to $\sqcup$, and its associated product $*_{\sqcup}$, which sends a pair $P$ and $Q$ of integer relations to their disjoint union $P\sqcup Q$.
\medskip

\begin{proposition} \label{prop:generado}  Let $S_{\mathcal R} :=\bigcup_{p\geq 1} {\mathbb O}_{\mathcal R}^{[p]} \bigcup \{\sqcup\}$ be the set generated by the maps ${\alpha}$ and the disjoint union $\sqcup$.\begin{enumerate}
\item $\K[{\mathcal R}]$ is generated as an $S_{\mathcal R}$-magmatic algebra by the element $\#\in {\mathcal R}_1$.
\item For any map $\alpha\in \bigcup_{p\geq 1}{\mathbb O}_{\mathcal R}^{[p]}$, the product $*_{\alpha}$ satisfies the unital infinitesimal relation with respect to the coproduct $\Delta_{\mathcal R}$.\end{enumerate}\end{proposition}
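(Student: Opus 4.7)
The plan is to handle the two parts of the proposition separately, since they rely on different techniques.

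For (1), I would proceed by induction on $n$, peeling off the first element. The base case $n=1$ is trivial. For $n > 1$ and $R \in {\mathcal R}_n$, set $Q := R|_{\{2, \ldots, n\}} \in {\mathcal R}_{n-1}$. If element $1$ has no relation with any element of $\{2, \ldots, n\}$ in $R$, then from Definition \ref{def:asprodinrel} we have $R = \# \sqcup Q$. Otherwise, let $r \in \{1, \ldots, n-1\}$ be the largest index for which $(1, 1+r) \in R$ or $(1+r, 1) \in R$, and define $\alpha : [r] \to {\mathbb O}_{\mathcal R}$ by letting $\alpha(i)$ be $\uparrow$, $\downarrow$, $\updownarrow$ or $\sqcup$ according to which of $(1, 1+i) \in R$ and $(1+i, 1) \in R$ hold. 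Maximality of $r$ yields $\alpha(r) \neq \sqcup$, and matching with Definition \ref{def:oprel} shows $R = \# *_{\alpha} Q$. Since $Q \in {\mathcal R}_{n-1}$, the inductive hypothesis completes the argument.

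For (2), I would verify the unital infinitesimal identity by direct computation. Fix $R \in {\mathcal R}_n$, $Q \in {\mathcal R}_m$ and $\alpha : [r] \to {\mathbb O}_{\mathcal R}$, and expand
\begin{equation*} \Delta_{\mathcal R}(R *_{\alpha} Q) = \sum_{i=0}^{n+m}(R*_{\alpha} Q)|_{[i]} \otimes (R*_{\alpha} Q)|_{\{i+1, \ldots, n+m\}}. \end{equation*}
The cross relations of $R *_{\alpha} Q$ all have position $n$ as one endpoint and some position in $\{n+1, \ldots, n+\min(r,m)\}$ as the other, which controls each restriction cleanly. Concretely, for $0 \le i \le n$ element $n$ is not paired with anything inside $[i]$, so $(R*_{\alpha} Q)|_{[i]} = R|_{[i]}$ and $(R*_{\alpha} Q)|_{\{i+1,\ldots,n+m\}} = R|_{\{i+1,\ldots,n\}} *_{\alpha} Q$ (with the convention $\emptyset *_{\alpha} Q = Q$ when $i = n$). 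For $n \le i \le n+m$, writing $i = n+k$, all surviving cross relations lie inside $[n+k]$, giving $(R*_{\alpha} Q)|_{[n+k]} = R *_{\alpha} Q|_{[k]}$ and $(R*_{\alpha} Q)|_{\{n+k+1,\ldots,n+m\}} = Q|_{\{k+1,\ldots,m\}}$ (with $R *_{\alpha} \emptyset = R$ when $k = 0$). Summing each range separately reconstructs exactly $\sum R_{(1)} \otimes (R_{(2)} *_{\alpha} Q)$ and $\sum (R *_{\alpha} Q_{(1)}) \otimes Q_{(2)}$; the boundary value $i = n$ belongs to both ranges and contributes $R \otimes Q$ twice, which accounts for the required $-R \otimes Q$ correction in the unital infinitesimal formula.

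The main obstacle is the bookkeeping around the boundary $i = n$ together with the fact that some intermediate $\alpha(j)$ may equal $\sqcup$. The latter causes no real trouble: the defining formulas in Definition \ref{def:oprel} depend only on the values $\alpha(1), \ldots, \alpha(\min(r, |Q'|))$ regardless of whether these end in $\sqcup$, so the restricted expressions $R *_{\alpha} Q|_{[k]}$ make sense without altering $\alpha$; the condition $\alpha(r) \neq \sqcup$ is merely a normalisation ensuring that the pair $(r, \alpha)$ is uniquely determined by the operator it defines. Once this is observed, the two sums fall into place and the identity matches term by term.
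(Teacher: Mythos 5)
Your proposal is correct and follows essentially the same route as the paper: part (1) is proved by the same induction peeling off the first element and reading off the map $\alpha$ from the relations of $1$ with $2,\dots,n$, and part (2) is the same direct expansion of $\Delta_{\mathcal R}(R*_\alpha Q)$ via the two restriction identities, with the doubled boundary term at $i=n$ producing the $-R\otimes Q$ correction (the paper packages this computation as Lemma \ref{lem:infoprel} and cites it). No gaps.
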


\begin{proof} We need to see that any element $R\in {\mathcal R}_n$ may be obtained from $\#$ by applying the products $*_{\alpha}$, for $\alpha\in S_{\mathcal R}$ or the disjoint union $\sqcup$. We proceed by recurrence on $n\geq 1$.

For $n =1$ or $ 2$, the result is evident.

For $n > 2$, let $R\in {\mathcal R}_n$ be a relation. The relation $R\vert_{\{2,\dots ,n\}}$ belongs to ${\mathcal R}_{n-1}$, so we assume that it is obtained from the element $\#$ by applying the products $*_{\alpha}$ or $\sqcup$. 
If $R$ is the disjoint union of $\#$ and $R\vert_{\{2,\dots ,n\}}$, the result is true.

Otherwise, let $p$ be the largest integer such that either $(1, p+1)\in R$ or $(p+1,1)\in R$. Let $\gamma:[p]\longrightarrow {\mathbb O}_{\mathcal R}$ be the map 
\begin{equation*} \gamma(i):=\begin{cases} \sqcup , &{\rm if}\ (1,i+1)\notin R\ {\rm and} \ (i+1,1)\notin R,\ {\rm or}\  i > p, \\
\uparrow , &{\rm if}\ (1, i+1)\in R\ {\rm and} (i+1,1)\notin R,\\
\downarrow , &{\rm if}\ (1, i+1)\notin R\ {\rm and}\ (i+1,1)\in R,\\
\updownarrow  , &{\rm if}\ (1, i+1)\in R\ {\rm and}\ (i+1,1)\in R,\end{cases}\end{equation*}
for $1\leq i\leq p$.

It is easily seen that $R = \# *_{\gamma}R\vert_{\{ 2,\dots ,n\}}$. 

Remark \ref{rem:opbasic} and the previous equality prove the first point.
\bigskip

The second point is an straightforward consequence of Lemma \ref{lem:infoprel} and the following equalities\begin{enumerate}[(i)]
\item $(P*_{\alpha}Q)\vert_{[i]} =\begin{cases}P\vert_{[i]},&\ {\rm for}\ 0\leq i\leq n,\\
P*_{\alpha} (Q\vert_{[i-n]}),&\ {\rm for}\ n+1\leq i\leq n+m,\end{cases}$
\item  $(P*_{\alpha}Q)\vert_{\{ i+1,\dots, n+m\}} = \begin{cases} P\vert_{\{ i+1,\dots ,n\}} *_{\alpha} Q,&\ {\rm for}\ 0\leq i\leq n,\\
Q\vert_{\{i+1,\dots ,n+m\}},&\ {\rm for}\ n+1\leq i\leq n+m.\end{cases}$
\end{enumerate}
\end{proof}
\bigskip

\subsection{Primitive elements in $({\mathbb H}_{{\mathcal R}_{PP}}^*, \Delta_{\mathcal R})$}
\medskip

Using Notation \ref{not:ops}, Theorem \ref{th:structure} states that, as a coalgebra, $({\mathbb H}_{{\mathcal R}_{PP}}^*, \Delta_{\mathcal R})$ is the cotensor coalgebra over the subspace of its primitive elements. The following result is a strightforward consequence of Theorem \ref{th:structure} and Proposition \ref{prop:genofprim}. 

\begin{proposition}\label{prop:primrel} Let $s_{\bullet}$ be the disjoint union product $\sqcup\ \in S_{\mathcal R}$. The subspace ${\mbox{Prim}({\mathbb H}_{{\mathcal R}_{PP}}^*)}$ of primitive elements of the coalgebra $({\mathbb H}_{{\mathcal R}_{PP}}^*, \Delta_{\mathcal R})$ is generated by the element $\#$ under the action of the products  \begin{enumerate}[(i)]
\item $M(\alpha) = *_{\alpha} - \sqcup$, with $\alpha \in \bigcup_p{\mathbb O}_{\mathcal R}^{[p]}$, 
\item $M_t(\alpha_1,\dots ,\alpha_{n-1})$, where $t={\mathbf 1}_k\circ (\vert ,t_2,\dots , t_{k-1},\vert)\in {\mbox{\bf PBT}_n}$ with $t_i = \vert \rightthreetimes w_i$ for $2\leq i\leq k-1$, satisfying that $\alpha_j\in S_{\mathcal R}$, for $1\leq j < n$, and that the first node of $t_i$ is colored with $s_{\bullet} = \sqcup$ whenever $w_i\neq 1_{\K}$.\end{enumerate}\end{proposition}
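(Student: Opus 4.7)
The plan is to transfer the general structural results from Theorem \ref{th:structure} and Proposition \ref{prop:genofprim} to ${\mathbb H}_{{\mathcal R}_{PP}}^*$ via the surjective morphism coming from Proposition \ref{prop:generado}. The two conditions of Proposition \ref{prop:generado} say exactly that $({\mathbb H}_{{\mathcal R}_{PP}}^*, \{*_{\alpha}\}_{\alpha\in S_{\mathcal R}}, \Delta_{\mathcal R})$ is a unital $S_{\mathcal R}$-magmatic infinitesimal bialgebra generated as an $S_{\mathcal R}$-magmatic algebra by the element $\#\in {\mathcal R}_1$. Since $\#$ is primitive (trivially, as $\Delta_{\mathcal R}^{red}(\#)=0$), the universal property of the free $S_{\mathcal R}$-magmatic infinitesimal bialgebra produces a surjective $S_{\mathcal R}$-magmatic coalgebra morphism
\begin{equation*}
\pi : \K[{\mathbf{PBT}}^{S_{\mathcal R}}]\longrightarrow {\mathbb H}_{{\mathcal R}_{PP}}^*, \qquad \pi(\vert) = \#,
\end{equation*}
compatible with every product $\veebar_{\alpha}$ and $*_{\alpha}$.

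Next, I would invoke Theorem \ref{th:structure}, which identifies the primitive subspace of $\K[{\mathbf{PBT}}^{S_{\mathcal R}}]$ with $\K[{\mathcal I}^{S_{\mathcal R}}]$, and note that $\pi$, being a coalgebra morphism, sends primitive elements to primitive elements. Because both coalgebras are conilpotent and $\pi$ is surjective, Theorem \ref{teo:estmagmatic} (applied on both sides) guarantees that the restriction $\pi\vert_{\mbox{Prim}}$ is surjective onto ${\mbox{Prim}({\mathbb H}_{{\mathcal R}_{PP}}^*)}$: any primitive element of the target lifts, via the cotensor coalgebra isomorphism, to a degree-one element of $T^c({\mbox{Prim}({\mathbb H}_{{\mathcal R}_{PP}}^*)})$, hence to an element in the image of $\pi$ restricted to primitives. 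Consequently ${\mbox{Prim}({\mathbb H}_{{\mathcal R}_{PP}}^*)}$ is the image under $\pi$ of the set of generators of $\K[{\mathcal I}^{S_{\mathcal R}}]$ described in Remark \ref{rem:relprim}, namely the operators $M(\alpha)$ and $M_t(\alpha_1,\dots,\alpha_{n-1})$ applied iteratively to $\#$.

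To conclude, I would apply Proposition \ref{prop:genofprim} with $s_{\bullet}=\sqcup$, which asserts that the subspace linearly spanned by all the operators $\{M(\alpha)\}\cup\{M_t(\alpha_1,\dots,\alpha_{n-1})\mid t\in {\mbox{$\rightthreetimes$-Irr}_n}\}$ coincides with the smaller subspace $\K[\hat{\mathbf I}]$ generated only by the operators of type (a), (b), (c) listed just before that proposition. Translating this list through $\pi$ yields exactly the two families in the statement: (i) comes from case (a) (binary products $M(\alpha)=*_{\alpha}-\sqcup$ with $\alpha\neq \sqcup$), while (ii) collects cases (b) and (c), where the tree $t = {\mathbf 1}_k\underline{\circ}(\vert,t_2,\dots,t_{k-1},\vert)$ with each nontrivial $t_i$ of the form $\vert \rightthreetimes w_i$ has its distinguished first node of each $t_i$ forced to be colored by $s_{\bullet}=\sqcup$.

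The only nontrivial verification is that $\pi$ sends the primitive subspace onto (not merely into) the primitive subspace of ${\mathbb H}_{{\mathcal R}_{PP}}^*$; this is the step where conilpotence of $\Delta_{\mathcal R}$ (which holds because the coproduct strictly decreases the size of relations) and Theorem \ref{teo:estmagmatic} are both needed. Once that is in hand, the chain of reductions Theorem \ref{th:structure} $\Rightarrow$ Remark \ref{rem:relprim} $\Rightarrow$ Proposition \ref{prop:genofprim} transports verbatim to ${\mathbb H}_{{\mathcal R}_{PP}}^*$ via $\pi$, and the statement follows.
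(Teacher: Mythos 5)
Your overall route --- realize ${\mathbb H}_{{\mathcal R}_{PP}}^*$ as a quotient of the free $S_{\mathcal R}$-magmatic infinitesimal bialgebra via the surjection $\pi$ with $\pi(\vert)=\#$ furnished by Proposition \ref{prop:generado}, and push the description of the primitives of the free object through $\pi$ using Theorem \ref{th:structure} and Proposition \ref{prop:genofprim} --- is exactly the reduction the paper intends. The problem is the step you yourself flag as the only nontrivial one: you assert that conilpotence of both coalgebras, surjectivity of $\pi$, and Theorem \ref{teo:estmagmatic} force $\pi$ to map ${\mbox{Prim}}(\K[{\mathbf {PBT}}^{S_{\mathcal R}}])$ \emph{onto} ${\mbox{Prim}}({\mathbb H}_{{\mathcal R}_{PP}}^*)$, because a primitive element of the target ``lifts to a degree-one element of $T^c({\mbox{Prim}})$, hence to an element in the image of $\pi$ restricted to primitives.'' That inference is false as a general principle: a surjective coalgebra morphism between conilpotent coalgebras need not be surjective on primitives. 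For example, the coalgebra morphism $f\colon T^c(\K v)\longrightarrow T^c(\K w)$ determined by the universal property of the cotensor coalgebra from the linear map sending $v\otimes v$ to $w$ and $v^{\otimes n}$ to $0$ for $n\neq 2$ satisfies $f(v^{\otimes 2n})=w^{\otimes n}$ and $f(v^{\otimes 2n+1})=0$; it is surjective and conilpotent on both sides, yet it annihilates the primitive subspace. So the word ``hence'' conceals the entire content of the step.

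The gap is repairable with material already in the paper. The isomorphisms $\K[{\mathbf {PBT}}^{S_{\mathcal R}}]\cong T^c(\K[{\mathcal I}^{S_{\mathcal R}}])$ and ${\mathbb H}_{{\mathcal R}_{PP}}^*\cong T^c({\mbox{Prim}}({\mathbb H}_{{\mathcal R}_{PP}}^*))$ of Theorems \ref{th:structure} and \ref{teo:estmagmatic} are not abstract: both are realized by the iterated left-comb $s_{\bullet}$-products $({\mathbf 0}_{r},(s_{\bullet},\dots ,s_{\bullet})){\underline{\circ}}(-,\dots ,-)$, which on the target is the $r$-fold product $\mu_r(x_1\otimes\dots\otimes x_r):=x_1\sqcup\dots\sqcup x_r$ (Proposition \ref{prop:structtheo} and the computation at the end of the proof of Theorem \ref{th:structure}). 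Since $\pi$ is a morphism for $\sqcup$, it intertwines the two decompositions; hence, setting $Q:=\pi(\K[{\mathcal I}^{S_{\mathcal R}}])\subseteq {\mbox{Prim}}({\mathbb H}_{{\mathcal R}_{PP}}^*)$, surjectivity of $\pi$ gives ${\mathbb H}_{{\mathcal R}_{PP}}^*=\sum_{r}\mu_r(Q^{\otimes r})$, and comparing the $r=1$ component with the direct-sum decomposition $\bigoplus_r\mu_r({\mbox{Prim}}^{\otimes r})$ shows $Q={\mbox{Prim}}({\mathbb H}_{{\mathcal R}_{PP}}^*)$. (Alternatively, the graded dimension count of Remark \ref{rem:disjprim} settles surjectivity on primitives, which is how the paper later verifies that the $\Xi(R)$ form a basis.) With that step supplied, your final reduction to Proposition \ref{prop:genofprim} with $s_{\bullet}=\sqcup$ is correct and yields the two families (i) and (ii) as stated.
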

\medskip

We want to construct a basis of the subspace ${\mbox{Prim}({\mathbb H}_{{\mathcal R}_{PP}}^*)}$. Note that the dimension of the vector space $\K[{\mathcal R}_n]$ is $4^{\frac{n(n-1)}{2}}$.
\medskip

\begin{remark}\label{rem:disjprim} The disjoint union $\sqcup$ of integer relations is an associative product. And it is easily seen that $(\K[{\mathcal R}], \sqcup )$ is the free associative algebra spanned by the $\sqcup$-irreducible elements, that is the relations $R$ satisfying that whenever $R = R_1\sqcup R_2$ then either $R_1 = R$ or $R_2 = R$. We denote by
${\mbox{$\sqcup$-Irr}_n}$ the set of $\sqcup$-irreducible integer relations on $[n]$.

Therefore, we get that the vector spaces $\K[{\mathcal R}]$, $T(\bigoplus_{n\geq 1}\K[{\mbox{$\sqcup$-Irr}_n}])$ and $T({\mbox{Prim}({\mathbb H}_{{\mathcal R}_{PP}}^*}))$ are isomorphic.\end{remark}
\medskip

Remark \ref{rem:disjprim} states that the dimension of the subspace ${\mbox{Prim}({\mathbb H}_{{\mathcal R}_{PP}}^*)}_n$ is equal to the number of $\sqcup$-irreducible relations on $[n]$, for $n\geq 1$. We are going to associate to any $\sqcup$-irreducible relation $R$ on $[n]$  a primitive element $x_R\in ({{\mathbb H}_{{\mathcal R}_{PP}}}^*)_n$, and prove that the collection $\{ x_R\}_{R\in {\mbox{$\sqcup$-Irr}_n}}$ is a basis of the vector space ${\mbox{Prim}({\mathbb H}_{{\mathcal R}_{PP}}^*)}$.
\medskip

\begin{notation}\label{not:opchap} Let $t\in {\mbox{\bf PBT}_n}$, given maps $\alpha_1,\dots ,\alpha_{n-1}\in \bigcup _{p\geq 1}{\mathbb O}_{\mathcal R}^{[p]}\cup \{\sqcup\}$. We denote by\begin{enumerate}[(i)]
\item $[t, \alpha_1,\dots ,\alpha_{n-1}]$ the action of the colored tree $(t, (\alpha_1,\dots ,\alpha_{n-1}))$ on an element of $\K[{\mathcal R}]^{\otimes n}$,
\item  $\Omega(t, {\alpha}_1,\dots ,{\alpha}_{n-1})$ the operation given by the action of tree $t$ with the nodes colored, from left to right, by the binary products $M(\alpha_1),\dots , M(\alpha_{n-1})$.
\end{enumerate} 
Note that the subspace ${\mbox{Prim}({\mathbb H}_{{\mathcal R}_{PP}}^*)}$ is closed under the action of $\Omega(t, {\alpha}_1,\dots ,{\alpha}_{n-1})$.
\medskip

Sometimes, when no confusion may arise, we denote an element $\alpha \in {\mathbb O}_{\mathcal R}^{[p]}$ by its image $\alpha(1)\dots \alpha(p)$. For instance, the map $\alpha$ given by $\alpha(1) = \uparrow$, $\alpha(2) = \sqcup$ and $\alpha(3) = \updownarrow$, will be denoted by the word $\uparrow \sqcup \updownarrow$. 
\medskip

Let $P$ and $R$ be two relations in ${\mathcal R}_n$, for some $n\geq 2$. We say that $P\subsetneq R$ when $P$ is strictly contained in $R$ as subsets $[n]\times [n]$.
\end{notation}
\medskip

\begin{remark} \label{rem:Rasprod} Proposition \ref{prop:generado} implies that any integer relation $R\in {\mathcal R}_{n}$ may be written as 
\begin{equation*} R = \# *_{\alpha_{n-1}} (\# *_{\alpha_{n-2}}(\dots *_{\alpha_{2}}(\# *_{\alpha_{1}} \#))),\end{equation*}
for unique elements $\alpha_1,\dots ,\alpha_{n-1}\in \bigcup _{p\geq 1}{\mathbb O}_{\mathcal R}^{[p]}\bigcup \{\sqcup\}$, where the size of $\alpha_i$ is smaller or equal to $i$.\end{remark}
\medskip

\begin{lemma} \label{lem:trees} For any $n\geq 3$, an element $(R_{n},\dots , R_1)\in (\bigcup_{m\geq 1}{\mathcal R}_m)^{\times n}$ and any family of elements $\alpha_1,\dots ,\alpha_{n-1}\in S_{\mathcal R}$, satisfying that there exists $2 \leq i< n$ such that $s(\alpha_i)\leq \vert R_{i}\vert$, we have that
\begin{equation*} M_{{\mathbf 1}_n}(\alpha_{n-1},\dots ,\alpha_1)(R_n,\dots ,R_1) = 0.\end{equation*}
If $s(\alpha_i) > \vert R_i\vert$, for all $2\leq i\leq n-1$, then 
\begin{equation*} M_{{\mathbf 1}_n}(\alpha_{n-1},\dots ,\alpha_1)(R_n,\dots , R_1) =  R +\sum_{P\subsetneq R} b_P P,\end{equation*}
where $R:=R_n *_{\alpha_{n-1}} (R_{n-1} *_{\alpha_{n-2}}(\dots *_{\alpha_{2}}(R_2 *_{\alpha_{1}} R_1)))$, and $b_P\in \{ -1, 0, 1\}$ for $P\subsetneq R$.\end{lemma}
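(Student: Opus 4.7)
The plan is to argue by induction on $n\geq 3$, using the recursion
$M_{\mathbf{1}_n} = \vert \veebar M_{\mathbf{1}_{n-1}} - \vert \rightthreetimes M_{\mathbf{1}_{n-1}}$
from Proposition \ref{Moebiuisofcomb}. Unwinding the $n$-ary operations, the first summand acts on $(R_n,\dots,R_1)$ as $R_n *_{\alpha_{n-1}}$ applied to $M_{\mathbf{1}_{n-1}}(\alpha_{n-2},\dots,\alpha_1)(R_{n-1},\dots,R_1)$, while the second summand, whose leftmost leaf is replaced by $\vert\veebar\vert$ (and whose leftmost node is thereby colored by $\alpha_{n-1}$), acts as $M_{\mathbf{1}_{n-1}}(\alpha_{n-2},\dots,\alpha_1)(R_n*_{\alpha_{n-1}}R_{n-1},R_{n-2},\dots,R_1)$. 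Denoting these $A$ and $B$, the goal is to analyze $A-B$. The base case $n=3$ is immediate: one obtains $R_3*_{\alpha_2}(R_2*_{\alpha_1}R_1) - (R_3*_{\alpha_2}R_2)*_{\alpha_1}R_1$, which by Remark \ref{rem:opbasic} vanishes exactly when $s(\alpha_2)\leq|R_2|$ and otherwise equals $R - P$ with $P\subsetneq R$.

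For the inductive step I would split into three subcases. \textbf{Case (i):} some $s(\alpha_i)\leq|R_i|$ with $2\leq i\leq n-2$. Since the $i$-th input to $M_{\mathbf{1}_{n-1}}$ inside both $A$ and $B$ is $R_i$ for such $i$, the inductive hypothesis forces $A=B=0$. \textbf{Case (ii):} $s(\alpha_i)>|R_i|$ for all $2\leq i\leq n-2$ but $s(\alpha_{n-1})\leq|R_{n-1}|$. Here I would establish a sublemma: for any planar binary tree $w$ with $m$ leaves and any labels, if $s(\alpha_{n-1})\leq|R_{n-1}|$ then
\begin{equation*}
w\bigl(R_n*_{\alpha_{n-1}}R_{n-1},S_2,\dots,S_m\bigr) = R_n *_{\alpha_{n-1}} w\bigl(R_{n-1},S_2,\dots,S_m\bigr).
\end{equation*}
This is proved by structural induction on $w$: at each node along the left spine one applies Remark \ref{rem:opbasic} to pull $R_n*_{\alpha_{n-1}}$ outward, the hypothesis $s(\alpha_{n-1})\leq|R_{n-1}|$ remaining valid because sizes only grow. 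Applying the sublemma linearly to the trees appearing in $M_{\mathbf{1}_{n-1}}$ gives $B=A$, hence $A-B=0$. \textbf{Case (iii):} $s(\alpha_i)>|R_i|$ for all $2\leq i\leq n-1$. The inductive hypothesis yields
\begin{equation*}
A = R_n*_{\alpha_{n-1}}R' + \sum_{Q'\subsetneq R'} b_{Q'}\,R_n*_{\alpha_{n-1}}Q', \qquad B = R'' + \sum_{Q''\subsetneq R''} b_{Q''}\,Q'',
\end{equation*}
where $R'=R_{n-1}*_{\alpha_{n-2}}(\cdots *_{\alpha_1}R_1)$ and $R''=(R_n*_{\alpha_{n-1}}R_{n-1})*_{\alpha_{n-2}}(\cdots)$, with all $b_{Q'},b_{Q''}\in\{-1,0,1\}$. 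The leading term $R_n*_{\alpha_{n-1}}R'$ equals $R$, while $R''\subsetneq R$ by Remark \ref{rem:opbasic} (the missing edges being precisely the $\alpha_{n-1}$-edges from position $|R_n|$ that would cross beyond the $R_{n-1}$ block).

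The main obstacle is verifying that coefficients in $A-B$ stay in $\{-1,0,1\}$. The decisive observation is a disjointness statement: every relation $R_n*_{\alpha_{n-1}}Q'$ contributing to $A$ contains the full set of $\alpha_{n-1}$-edges emanating from position $|R_n|$ (they reach into all subsequent blocks, up to position $|R_n|+\min(s(\alpha_{n-1}),|R_{n-1}|+\dots+|R_1|)$), whereas $R''$ and every $Q''$ contributing to $B$ are built so that those $\alpha_{n-1}$-edges only touch the $R_{n-1}$ block. Since $s(\alpha_{n-1})>|R_{n-1}|$ and $\alpha_{n-1}(s(\alpha_{n-1}))\neq\sqcup$, at least one such edge actually does reach past $R_{n-1}$, so the two families of relations are disjoint. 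Within each family the coefficients are already $\pm 1$ or $0$ by the inductive hypothesis, so no accumulation can occur; moreover each relation that appears is a proper sub-relation of $R$. This finishes Case (iii) and completes the induction.
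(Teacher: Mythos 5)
Your argument follows the same route as the paper's: induction on $n$ through the recursion $M_{{\mathbf 1}_{n}}=\vert\veebar M_{{\mathbf 1}_{n-1}}-\vert\rightthreetimes M_{{\mathbf 1}_{n-1}}$ of Proposition \ref{Moebiuisofcomb}, with the cases governed by the position of the index $i$ for which $s(\alpha_i)\leq\vert R_i\vert$. Your case (i) is the paper's second subcase. In case (ii) the paper expands one level further into four terms and cancels them in pairs via Remark \ref{rem:opbasic}; your sublemma, which pulls $R_n*_{\alpha_{n-1}}$ out of an arbitrary tree operation by walking the left spine, packages the same associativity argument more cleanly and is correct: the size of the accumulated left factor only grows along the spine, so the hypothesis of Remark \ref{rem:opbasic} remains satisfied at every node.

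In case (iii) you are more careful than the paper, which only records that every relation other than $R$ occurring in the two halves is strictly contained in $R$ and never explains why coefficients cannot accumulate outside $\{-1,0,1\}$; your disjointness observation is exactly the missing ingredient. But its justification --- that some $\alpha_{n-1}$-edge from position $\vert R_n\vert$ genuinely reaches past the $R_{n-1}$ block because $\alpha_{n-1}(s(\alpha_{n-1}))\neq\sqcup$ --- silently assumes $s(\alpha_{n-1})\leq\vert R_{n-1}\vert+\dots+\vert R_1\vert$: the $\min$ in Definition \ref{def:oprel} truncates these edges at the total size of the right factor, so the edge at index $s(\alpha_{n-1})$ need not exist. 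Concretely, for $n=3$, $R_3=R_2=R_1=\#$, $\alpha_1=\uparrow$ and $\alpha_2=\uparrow\sqcup\uparrow$ (so $s(\alpha_2)=3>1=\vert R_2\vert$) one checks that $R_3*_{\alpha_2}(R_2*_{\alpha_1}R_1)=(R_3*_{\alpha_2}R_2)*_{\alpha_1}R_1$, whence $M_{{\mathbf 1}_3}(\alpha_2,\alpha_1)(\#,\#,\#)=0$ and the second assertion of the Lemma itself fails. This is not a defect of your proof alone: the paper's base case asserts the strict inclusion for the same reason and carries the same hidden hypothesis. Under the extra assumption $s(\alpha_{n-1})\leq\sum_{i<n}\vert R_i\vert$ --- which holds in the paper's applications, cf.\ Remark \ref{rem:Rasprod} --- your disjointness argument does close the coefficient-bound gap that the paper leaves open.
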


\begin{proof} We prove the first assertion by induction on $n$.

For $n =3$, we have that $R_3*_{\alpha_2} (R_2*_{\alpha _1} R_1) = (R_3*_{\alpha_2} R_2)*_{\alpha_1} R_1$, because $\vert R_2\vert \geq s(\alpha_2)$. Therefore, we get that
\begin{equation*}M_{{\mathbf 1}_3}(\alpha_2, \alpha_1) (R_3, R_2, R_1) = R_3*_{\alpha_2} (R_2*_{\alpha _1} R_1) - (R_3*_{\alpha_2} R_2)*_{\alpha_1} R_1 =0.\end{equation*}
\medskip

For $n > 3$, we have to consider two cases,\begin{enumerate}[(a)]
\item If $s(\alpha_{n-1}) \leq \vert R_{n-1}\vert$ and $s(\alpha_i) > \vert R_i\vert$, for $2\leq i\leq n-2$, then Proposition \ref{Moebiuisofcomb} states that 
\bigbreak
$
M_{{\mathbf 1}_n}(\alpha_{n-1},\dots ,\alpha_1) (R_n,\dots ,R_1) =R_n *_{\alpha _{n-1}} (R_{n-1}*_{\alpha_{n-2}} M_{{\mathbf 1}_{n-2}}(\alpha_{n-3},\dots ,\alpha_1) (R_{n-2},\dots ,R_1)) \\-
(R_n *_{\alpha _{n-1}} R_{n-1})*_{\alpha_{n-2}} M_{{\mathbf 1}_{n-2}}(\alpha_{n-3},\dots ,\alpha_1) (R_{n-2},\dots ,R_1)\\-
R_{n}*_{\alpha_{n-1}}((R_{n-1}*_{\alpha_{n-2}}R_{n-2})*_{\alpha_{n-3}}M_{{\mathbf 1}_{n-3}}(\alpha_{n-4},\dots ,\alpha_1)(R_{n-3}, \dots ,R_{1}))\\+
(R_n*_{\alpha_{n-1}}(R_{n-1}*_{\alpha_{n-2}}R_{n-2}))*_{\alpha_{n-3}}M_{{\mathbf 1}_{n-3}}(\alpha_{n-4},\dots ,\alpha_1)(R_{n-3}, \dots ,R_{1}).
$
\bigbreak
But, since $s(\alpha_{n-1}) \leq \vert R_{n-1}\vert$,  $s(\alpha_{n-1}) \leq \vert R_{n-1}*_{\alpha_{n-2}}R_{n-2}\vert $, too. Therefore, we get\begin{enumerate}[(i)]
\item \begin{align*} R_n *_{\alpha _{n-1}} (R_{n-1}*_{\alpha_{n-2}} &M_{{\mathbf 1}_{n-2}}(\alpha_{n-3},\dots ,\alpha_1) (R_{n-2},\dots ,R_1)) =\\
&(R_n *_{\alpha _{n-1}} R_{n-1})*_{\alpha_{n-2}} M_{{\mathbf 1}_{n-2}}(\alpha_{n-3},\dots ,\alpha_1) (R_{n-2},\dots ,R_1),\end{align*}

\item \begin{align*}R_{n}*_{\alpha_{n-1}}&((R_{n-1}*_{\alpha_{n-2}}R_{n-2})*_{\alpha_{n-3}}M_{{\mathbf 1}_{n-3}}(\alpha_{n-4},\dots ,\alpha_1)(R_{n-3}, \dots ,R_{1}))=\\
&(R_n*_{\alpha_{n-1}}(R_{n-1}*_{\alpha_{n-2}}R_{n-2}))*_{\alpha_{n-3}}M_{{\mathbf 1}_{n-3}}(\alpha_{n-4},\dots ,\alpha_1)(R_{n-3}, \dots ,R_{1}),\end{align*}
\end{enumerate}
which implies that $M_{{\mathbf 1}_n}(\alpha_{n-1},\dots ,\alpha_1) (R_n,\dots ,R_1) = 0$.
\item If $\vert R_{n-1}\vert > s(\alpha_{n-1})$, then there exists $2\leq i < n-1$, such that $s(\alpha_{i}) \leq \vert R_{i}\vert$. So, a recursive argument states that 
\begin{equation*} M_{{\mathbf 1}_{n-1}}(\alpha_{n-2},\dots ,\alpha_1)(R_{n-1},\dots ,R_1) = 0,\end{equation*} and also that 
\begin{equation*} M_{{\mathbf 1}_{n-1}}(\alpha_{n-2},\dots ,\alpha_1)(R_n*_{\alpha_{n-1}}R_{n-1},R_{n-2}, \dots ,R_1) = 0.\end{equation*}
Applying again Proposition \ref{Moebiuisofcomb}, we have that
\begin{align*} M_{{\mathbf 1}_n}(\alpha_{n-1},\dots ,\alpha_1)& (R_n,\dots ,R_1) = R_n*_{\alpha_{n-1}}M_{{\mathbf 1}_{n-1}}(\alpha_{n-2},\dots ,\alpha_1)(R_{n-1},\dots ,R_1) - \\
&M_{{\mathbf 1}_{n-1}}(\alpha_{n-2},\dots ,\alpha_1)(R_n*_{\alpha_{n-1}}R_{n-1},R_{n-2},\dots ,R_1)=0,\end{align*}
which ends the proof of the first assertion.
\end{enumerate}
\bigskip

  For the second point, we proceed again by induction. For $n = 3$, we get that
 \begin{equation*} (R_3*_{\alpha_2} R_2)*_{\alpha_1}R_1\subsetneq R_3*_{\alpha_2} (R_2*_{\alpha_1}R_1),\end{equation*}
 because $s(\alpha_2) > \vert R_2\vert$ implies that either the pair $(n_3, n_3 + s(\alpha_2))$ or the pair $( n_3 + s(\alpha_2), n_3)$, or both of them, belong to $R_3*_{\alpha_2} (R_2*_{\alpha_1}R_1)$ and do not belong to $(R_3*_{\alpha_2} R_2)*_{\alpha_1}R_1$, where $\vert R_3\vert = n_3$.
 
From Proposition \ref{Moebiuisofcomb}, we have that 

\begin{align*} M_{{\mathbf 1}_n}(\alpha_{n-1},\dots ,\alpha_1)(R_n,\dots , R_1) &= R_n*_{\alpha _{n-1}} M_{{\mathbf 1}_{n-1}}(\alpha_{n-2}, \dots ,\alpha_1)(R_{n-1},\dots , R_1)
\\&- M_{{\mathbf 1}_{n-1}}(\alpha_{n-2}, \dots ,\alpha_1)(R_n*_{\alpha_{n-1}} R_{n-1},R_{n-2}, \dots , R_1).\end{align*}
\medskip

Using a recursive argument, we have that \begin{enumerate}[(a)]
\item as $M_{{\mathbf 1}_{n-1}}(\alpha_{n-2},\dots ,\alpha_1)(R_{n-1},\dots , R_1) =$
\begin{equation*} ({\mathbf 1}_{n-1}, (\alpha_{n-2},\dots ,\alpha_1))(R_{n-1}, \dots , R_1)  + \sum _{P\rq \subsetneq R\rq} b_{P\rq} P\rq, \end{equation*}
where $R\rq := ({\mathbf 1}_{n-1}, (\alpha_{n-2},\dots ,\alpha_1))(R_{n-1}, \dots , R_1)$. 
So,\\ $R_n*_{\alpha _{n-1}} M_{{\mathbf 1}_{n-1}}(\alpha_{n-2}, \dots ,\alpha_1)(R_{n-1},\dots , R_1) =$
\begin{equation*} ( {\mathbf 1}_n, (\alpha _{n-1},\dots ,\alpha_1))(R_n, \dots ,R_1) +
\sum_{P_j \subsetneq R\rq} b_{P_j} R_n*_{\alpha _{n-1}}P_j,\end{equation*}
with $R_n*_{\alpha_{n-1}} P_j \subsetneq ({\mathbf 1}_{n}, (\alpha_{n-1},\dots ,\alpha_1))(R_{n}, \dots , R_1)$ for all $j$.
\item On the other hand, the element $M_{{\mathbf 1}_{n-1}}(\alpha_{n-2}, \dots ,\alpha_1)(R_n*_{\alpha_{n-1}} R_{n-1},\dots , R_1) $ is, by recursive hypothesis,  a linear combination of relations, 
\begin{align*} M_{{\mathbf 1}_{n-1}}(\alpha_{n-2}, \dots ,\alpha_1)(R_n*_{\alpha_{n-1}} R_{n-1},\dots , R_1) &= ({\mathbf 1}_{n-1},(\alpha_{n-2}, \dots ,\alpha_1))(R_n*_{\alpha_{n-1}}R_{n-1}, R_{n-2},\dots ,R_1) \\&+  \sum _{Q_k \subsetneq R^2} c_{Q_k} Q_k, \end{align*}
where $R^2 := ({\mathbf 1}_{n-1},(\alpha_{n-2}, \dots ,\alpha_1))(R_n*_{\alpha_{n-1}}R_{n-1}, R_{n-2},\dots ,R_1) $.
But, as $s(\alpha_{n-1}) > \vert R_{n-1}\vert $, we get that $R^2 \subsetneq R= ({\mathbf 1}_{n}, (\alpha_{n-1},\dots ,\alpha_1))(R_n,\dots , R_1)$. 
\end{enumerate}
So, 
\begin{align*} &M_{{\mathbf 1}_n}(\alpha_{n-1},\dots ,\alpha_1)(R_n,\dots , R_1) = ( {\mathbf 1}_n, (\alpha _{n-1},\dots ,\alpha_1))(R_n, \dots ,R_1) +\\
&\sum_{P_j \subsetneq R\rq} b_{P_j} R_n*_{\alpha _{n-1}}P_j - ({\mathbf 1}_{n-1},(\alpha_{n-2}, \dots ,\alpha_1))(R_n*_{\alpha_{n-1}}R_{n-1}, R_{n-2},\dots ,R_1) +\\
&  \sum _{Q_k \subsetneq R^2} c_{Q_k} Q_k, \end{align*}    
where $R_n*_{\alpha _{n-1}}P_j\subsetneq R$, $({\mathbf 1}_{n-1},(\alpha_{n-2}, \dots ,\alpha_1))(R_n*_{\alpha_{n-1}}R_{n-1}, R_{n-2},\dots ,R_1)\subsetneq R$ and $Q_k\subsetneq R$ for all $j$ and $k$, and the proof is over.
\end{proof}
\medskip

Note that Lemma \ref{lem:trees} shows that $\K[{\mathcal R}]$ is not the free $S_{\mathcal R}$-magmatic algebra.

\begin{remark} \label{rem:irreducibles} A relation $R=\#*_{\alpha_{n-1}}(\#*_{\alpha_{n-2}}(\dots *_{\alpha_2}(\# *_{\alpha_1}\#)))$ is $\sqcup$-irreducible if, and only if, it satisfies one of the following conditions\begin{enumerate}[(a)]
\item $\alpha_i\neq \sqcup$, for $1\leq i< n-1$,
\item if $\alpha_i =\sqcup$ there exists $i < j<n$ such that $s(\alpha_j) > j-i$.\end{enumerate}\end{remark}
\medskip

Let us define a map $\Xi : \sqcup-{\mbox{Irr}_n}\longrightarrow {\mbox{Prim}({\mathbb H}_{{\mathcal R}_{PP}}^*)}$.\begin{enumerate}
\item $\Xi (\#) := \#$, and $\Xi (\#*_{\alpha}\#) := M(\alpha)(\#, \#) = \#*_{\alpha}\# - \# \sqcup \#$, for $\alpha:[1]\longrightarrow  \{\uparrow , \downarrow, \updownarrow\}$,
\item For $R = \#*_{\alpha}R\rq$ with $R\rq$ a $\sqcup$-irreducible integer relation and $s(\alpha)\geq 1$, define
\begin{equation*}\Xi(R) := M(\alpha)(\#, \Xi(R\rq)).\end{equation*}
\item Otherwise, assume that $R = \#*_{\alpha}R\rq$, with $R\rq $ a $\sqcup$-reducible integer relation. Using that $\sqcup$ is associative, it is easily seen that there exist a unique collection $R_1,\dots ,R_q$ of $\sqcup$-irreducible integer relations such that $R\rq = R_1\sqcup \dots \sqcup R_q$, for $q >1$. Moreover, as $R$ is $\sqcup$-irreducible, we have that $s(\alpha) > \vert R_1\vert +\dots +\vert R_{q-1}\vert$. The element $\Xi (R)$ is given by
\begin{equation*} \Xi(R) := M_{\vert \veebar{\mathbf 0}_{q-1}}(\alpha, \sqcup , \dots, \sqcup)(\#, \Xi(R_1),\dots , \Xi(R_q)).\end{equation*}
Note that $M_{\vert \veebar{\mathbf 0}_{q-1}}(\alpha, \sqcup , \dots, \sqcup) = (\vert \veebar{\mathbf 0}_{q-1}(\alpha, \sqcup , \dots, \sqcup)) - (\vert \veebar{\mathbf 0}_{q-2} (\alpha, \sqcup , \dots, \sqcup))\veebar _{\sqcup} \vert$ is a primitive operator and the elements $ \Xi(R_1),\dots , \Xi(R_q)$ are primitive, which implies that $ \Xi(R)$ is a primitive element, too.
\end{enumerate}
\medskip

We want to prove that the set $\{\Xi(R)\}_{R\in \sqcup-{\mbox{Irr}}}$ is a basis of ${\mbox{Prim}({\mathbb H}_{{\mathcal R}_{PP}}^*)}$. We know that the dimension of the vector space ${\mbox{Prim}({\mathbb H}_{{\mathcal R}_{PP}}^*)}_n$ is equal to the number of $\sqcup$-irreducible relations of size $n$, for $n\geq 1$. So, it suffices to see that the set $\{\Xi(R)\}_{R\in \sqcup-{\mbox{Irr}_n}}$ is linearly independent, for $n\geq 1$. This result is a straightforward consequence of the following proposition.
\medskip

\begin{proposition} \label{prop:final} For any $\sqcup$-irreducible relation $R$, we have that:
\begin{equation*} \Xi(R) = R +\sum_{j}b_j P_j,\end{equation*}
with $P_j\subsetneq R$, for all $j$.\end{proposition}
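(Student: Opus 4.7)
The plan is to proceed by induction on $n = |R|$, following the three-case structure of the definition of $\Xi$. The base cases $n=1$ and $n=2$ are immediate: $\Xi(\#) = \#$, and $\Xi(\# *_\alpha \#) = \# *_\alpha \# - \# \sqcup \#$, where $\# \sqcup \# \subsetneq \# *_\alpha \#$ because $\sqcup$-irreducibility of $R$ forces $\alpha \neq \sqcup$, so $\alpha$ contributes at least one pair linking positions $1$ and $2$ that is absent from $\# \sqcup \#$.

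For the inductive step, write $R = \# *_\alpha R'$ as in Remark \ref{rem:Rasprod} and distinguish two cases. In the first case, $R'$ is itself $\sqcup$-irreducible and $\Xi(R) = M(\alpha)(\#, \Xi(R')) = \# *_\alpha \Xi(R') - \# \sqcup \Xi(R')$. Applying the inductive hypothesis $\Xi(R') = R' + \sum_j b_j P_j$ with $P_j \subsetneq R'$ yields $\Xi(R) = R + \sum_j b_j (\# *_\alpha P_j) - (\# \sqcup R') - \sum_j b_j (\# \sqcup P_j)$. Each correction term is strictly contained in $R$: the summands $\# *_\alpha P_j$ share the $\alpha$-pairs of $R$ but strictly lose pairs inside the second block, while the $\# \sqcup (\cdot)$ summands lack the $\alpha$-pairs from position $1$, which are nonempty since $\alpha \neq \sqcup$ and $s(\alpha) \geq 1$.

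In the second case, $R' = R_1 \sqcup \cdots \sqcup R_q$ with the $R_i$'s $\sqcup$-irreducible and $q > 1$, and Remark \ref{rem:irreducibles} forces $s(\alpha) > |R_1| + \cdots + |R_{q-1}|$. I would expand the operator $M_{\vert \veebar \mathbf{0}_{q-1}}(\alpha, \sqcup, \ldots, \sqcup)$ using the explicit two-term formula supplied in the definition of $\Xi$, and then apply multilinearity, invoking the inductive hypothesis $\Xi(R_i) = R_i + (\text{strictly smaller})$ in each slot. The dominant contribution is $R$ minus a single correction $R'' = (\# *_\alpha (R_1 \sqcup \cdots \sqcup R_{q-1})) \sqcup R_q$; here $R'' \subsetneq R$ precisely because $s(\alpha) > |R_1| + \cdots + |R_{q-1}|$ guarantees that in $R$ (but not in $R''$) the element $\alpha$ produces genuine pairs between position $1$ and positions inside the last block $R_q$. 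Every remaining summand arises from replacing at least one $R_i$ by some $P_{i,k} \subsetneq R_i$, and a direct comparison of the two tree-actions shows each such summand is a strict sub-relation of $R$.

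The main obstacle is the second case. The delicate point is controlling the correction term $R''$ and verifying that its subleading siblings, which are combinations of $*_\alpha$- and $\sqcup$-graftings of the $P_{i,k}$, remain strict sub-relations of $R$; this is where the $\sqcup$-irreducibility inequality from Remark \ref{rem:irreducibles} enters in an essential way. A secondary technicality is that when expanding multilinearly, summands of $\Xi(R_i)$ could a priori coincide or cancel across the two tree-actions $[\vert \veebar \mathbf{0}_{q-1}]$ and $[(\vert \veebar \mathbf{0}_{q-2}) \veebar \vert]$. However, because each summand has the explicit form of a $*_\alpha$- or $\sqcup$-grafting of smaller sub-relations, an inspection of the pairs they contain shows that no cancellation produces a term that fails to be strictly contained in $R$, which completes the induction.
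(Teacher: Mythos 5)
Your proposal is correct and follows essentially the same route as the paper's proof: induction on the size of $R$, the same two inductive cases driven by the definition of $\Xi$, the observation that $\# *_\alpha P_j$ and $\# \sqcup (\cdot)$ terms are strict sub-relations of $R$ in the first case, and in the second case the expansion of the two-term operator $M_{\vert \veebar \mathbf{0}_{q-1}}(\alpha,\sqcup,\dots,\sqcup)$ together with the inequality $s(\alpha) > \vert R_1\vert + \dots + \vert R_{q-1}\vert$ to show every correction term is strictly contained in $R$. The paper handles your cancellation worry implicitly by noting that each summand is individually $\subsetneq R$, so no further analysis is needed.
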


\begin{proof} 
\begin{enumerate}
\item For $n = 1, 2$ the result is immediate. 

For $n > 2$, we proceed by recursion on $n$. We have to consider different cases
\item If $R = \# *_{\alpha} R\rq$, with $R\rq$ a $\sqcup$-irreducible integer relation, we assume that $\Xi(R\rq) = R\rq +\sum_{j}b_j P_j$, with $P_j\subsetneq R\rq$ whenever $b_j\neq 0$. So, we get that
\begin{align*}\Xi (R) =& M(\alpha)(\#, \Xi (R\rq)) = \\
&\#*_{\alpha} R\rq +\sum_{j}b_j \# _{\alpha} P_j - \# \sqcup R\rq - \sum_{j}b_j \# \sqcup P_j.\end{align*}
Clearly, $\#\sqcup R\rq \subsetneq R$ and $\#\sqcup P_j\subsetneq R$  if $b_p\neq 0$. Notice that, since  $P_j\subsetneq R\rq$ then $ \# _{\alpha} P_j \subsetneq R$ and $\Xi(R)$ is of the desired form in this case.
\item Let $R = \#*_{\alpha} (R_1\sqcup \dots\sqcup R_q)$, with $s(\alpha) > \vert R_1\vert + \dots +\vert R_{q-1}\vert$ and $R_1,\dots ,R_q$ a collection of $\sqcup$-irreducible integer relations, for $q > 1$.
Applying a recursive argument, we get that $\Xi (R_i) = R_i + \sum_{1\leq j\leq m_i} b_j^i P_j^i$, with $P_j^i\subsetneq R_i$ whenever $b_j^i\neq 0$, for $1\leq i\leq q$.
Recall that
\begin{align*} \Xi (R) &= \\
&(\vert \veebar {\mathbf 0_{q-1}} (\alpha,\sqcup, \dots, \sqcup))(\#, \Xi (R_1),\dots ,\Xi(R_q)) - ((\vert \veebar {\mathbf 0_{q-2}})\veebar \vert (\alpha , \sqcup , \dots , \sqcup))(\#, \Xi (R_1),\dots ,\Xi(R_q)).\end{align*}
We have that \begin{enumerate} [(i)]
\item \begin{align*} (\vert \veebar {\mathbf 0_{q-1}}, (\alpha,\sqcup, \dots, \sqcup))&(\#, \Xi (R_1),\dots ,\Xi(R_q)) = R +
&\sum_{\substack {j_1,\dots ,j_k,\\ 0\leq j_l\leq m_l\\ j_1+\dots +j_q\geq 1} } b_{j_1}^1\cdot \dots \cdot b_{j_q}^qR_{j_1,\dots ,j_q} ,\end{align*}
where the sum is taken over all elements $(j_1,\dots ,j_q)$ satisfying that $0\leq j_l\leq m_l$ and at least one integer $j_l > 0$, and 
\begin{equation*} R_{j_1,\dots ,j_q} :=  (\vert \veebar {\mathbf 0_{q-1}}, (\alpha,\sqcup, \dots, \sqcup))(\#, P_{j_1}^1,\dots ,P_{j_q}^q)),\end{equation*}
where $P_0^i = R_i$ and $b_0^i = 1$, for $1\leq i\leq q$.

As at least one element $j_l\neq 0$, we have that $R_{j_1,\dots ,j_q}\subsetneq R$.
\item On the other hand, we have that 
\begin{equation*} ((\vert \veebar {\mathbf 0_{q-2}})\veebar \vert (\alpha , \sqcup , \dots , \sqcup))(\#, Q_1,\dots ,Q_q) =  (\vert \veebar {\mathbf 0_{q-2}} (\alpha , \sqcup , \dots , \sqcup))(\#, Q_1,\dots ,Q_{q-1}))\sqcup Q_q,\end{equation*}
for any family of integer relations $Q_1,\dots ,Q_q$. Therefore, if $s(\alpha ) > \vert Q_1\vert + \dots +\vert Q_{q-1}\vert $, then it is clear that
\begin{equation*} ((\vert \veebar {\mathbf 0_{q-2}})\veebar \vert (\alpha , \sqcup , \dots , \sqcup))(\#, Q_1,\dots ,Q_q)\subsetneq (\vert \veebar {\mathbf 0_{q-1}} (\alpha, \sqcup,\dots ,\sqcup)) (\#, Q_1,\dots ,Q_q),\end{equation*}
which implies that \begin{itemize}
\item $((\vert \veebar {\mathbf 0_{q-2}})\veebar \vert (\alpha , \sqcup , \dots , \sqcup))(\#, R_1,\dots ,R_q)\subsetneq R$,
\item For any $R_{j_1,\dots ,j_q} :=  (\vert \veebar {\mathbf 0_{q-1}} (\alpha,\sqcup, \dots, \sqcup))(\#, P_{j_1}^1,\dots ,P_{j_q}^q))$,
where $P_0^i = R_i$ and $b_0^i = 1$ for $1\leq i\leq q$, we get that
\begin{equation*}((\vert \veebar {\mathbf 0_{q-2}})\veebar \vert (\alpha , \sqcup , \dots , \sqcup))(\#, P_{j_1}^1,\dots ,P_{j_q}^q)) \subsetneq (\vert \veebar {\mathbf 0_{q-1}} (\alpha,\sqcup, \dots, \sqcup))(\#, P_{j_1}^1,\dots ,P_{j_q}^q)).\end{equation*}
\end{itemize}

So, we may conclude that $\Xi (R) = R + \sum_{j} a_j Q_j$, with $Q_j\subsetneq R$ for $a_j\neq 0$.

\end{enumerate}
\end{enumerate}
As $\subseteq $ defines a partial order on the set of integer relations, the previous arguments show that the set $\{\Xi (R)\}_{R\in {\mbox{$\sqcup$-Irr}}}$ is a basis of the subspace of primitive elements of ${\mbox{Prim}({\mathbb H}_{{\mathcal R}_{PP}}^*)}$.

\end{proof}

\section*{Acknowledgments} The authors research is supported by the Project MathAmSud {\it Algebraic Structures Supported on Families of Combinatorial Objects}, AmSud 210017.
The authors wish to thank Rafael Gonz\'alez D\rq Le\'on for many useful comments, and Eugenia Ellis for the organization of the meeting {\it Matem\'aticas en el Cono Sur} where our joint work began.


\end{document}